\numberwithin{equation}{section}
\newtheorem{Thm}{Theorem}[section]
\newtheorem{Rmk}[Thm]{Remark}
\newtheorem{Prop}[Thm]{Proposition}
\newtheorem{Lem}[Thm]{Lemma}
\newtheorem{Lemma}{Lemma}
\newtheorem{Def}[Thm]{Definition}
\def\eqdefa{\buildrel\hbox{\footnotesize def}\over =}
\def\bC {\mathbf{C}}
\def\bR {\mathbf{R}}
\def\bT {\mathbf{T}}
\def\bZ {\mathbf{Z}}
\def\bP {\mathbb{P}}
\def\cB {\mathcal{B}}
\def\cF {\mathcal{F}}
\def\eps {{\epsilon}}
\def\om {{\omega}}
\def\la {\langle}
\def\ra {\rangle}
\def \La {\bigg\langle}
\def \Ra {\bigg\rangle}
\def\d {{\partial}}
\newcommand{\sgn}{\operatorname{sign}}
\newcommand{\ba}{\begin{aligned}}
\newcommand{\ea}{\end{aligned}}
\newcommand{\be}{\begin{equation}}
\newcommand{\ee}{\end{equation}}
\newcommand{\bu}{\bar u}
\let\ds=\displaystyle
\begin{document}
\title[Rotating fluids with resonant surface stress ]
      {Mathematical study of  rotating fluids \\with resonant surface stress}

\author{Anne-Laure Dalibard  \and Laure Saint-Raymond}

\date{\today}

\maketitle

\begin{abstract}
 
We are interested here  in describing the linear response of a highly rotating fluid to some surface stress tensor, which admits fast time oscillations and
may be resonant with the Coriolis force.  In addition to the usual
Ekman layer, we exhibit another - much larger -  boundary layer, and
we prove that for large times, the effect of the surface stress may no longer be localized in the vicinity of the surface. 
 From a mathematical point of view,  the main novelty here  is to
introduce some systematic approach for the study of boundary effects.

\end{abstract}

The goal of this paper is to understand the influence of a surface stress  - depending on time - on the evolution of an incompressible and homogeneous rotating fluid. More precisely, we are interested in the effects of a resonant forcing, i.e. of a stress oscillating with the same period as the rotation of the fluid. 

In the non-resonant case, the works by Desjardins and Grenier \cite{DG} then by Masmoudi \cite {M} show that the wind forcing creates essentially some boundary layer in the vicinity of the surface, which contributes to the mean motion by a source term, known as the Ekman pumping. For a precise description of the method leading to such convergence results, we refer to the book \cite{CDGG} by Chemin, Desjardins, Gallagher and Grenier.

Here the situation is much more complicated since the resonant part of the forcing will be proved to generate another boundary layer with a different typical size, and may overall destabilize the whole fluid with the apparition of a vertical profile. We give here a precise description of these (linear) effects of the Coriolis force in presence of resonant wind.

\section{Introduction}

Let us first present the mathematical framework of our study.

\subsection{A linear model for rotating fluids} $ $

$\bullet$ Our starting point is the linear version of the {\bf homogeneous incompressible Navier-Stokes system in a rotating frame}
\begin{equation}
\label{NS0}
\begin{aligned}
\d_t u +\nabla p =\cF +  u\wedge\Omega \,,\\
\nabla \cdot u =0\,,
\end{aligned}
\end{equation}
where $\cF$  denotes the  frictional
force acting on the fluid, $\Omega$ is the rotation vector, and $p$ is the pressure defined as the Lagrange multiplier associated with the incompressibility constraint.
We assume that equation \eqref{NS0} is already in a nondimensional form, meaning that all unknowns and parameters are dimensionless. For a precise dimensional analysis, we refer for instance to \cite{LTW} (section I.3).

We assume further that the rotation vector $\Omega$ is constant, homogeneous, and has constant vertical direction, which we denote by $e_3.$ Moreover, we wish to study the limit of fast rotation, i.e. $|\Omega|\to \infty.$ Hence, we set
$$
\Omega:= \frac{1}{\eps} e_3,\quad\text{with }\eps\to 0,
$$
where the parameter $\eps$ is called the Rossby number. 

$\bullet$ We consider the motion {\bf in some horizontal strip}
$$ \omega = \omega_h \times [0,1]$$
where the bottom and upper surface of the fluid are assumed to be flat at $z=0$ and $z=1$. For the sake of simplicity, we restrict our attention to the case when  $\omega_h=\bT^2$ is the the two-dimensional torus.

As boundary conditions on the upper surface, we enforce
\begin{equation}
\label{top}
\begin{aligned}
u_{3|z=1}=0,\\
\d_z u_{h|z=1}=\beta\sigma^\eps\,,
\end{aligned}
\end{equation}
where $\beta$ is a positive constant and $\sigma^\eps$ is a given stress tensor of order one, describing  the { stress on the surface of the fluid}. 

At the bottom we use the Dirichlet boundary condition
\begin{equation}
\label{bottom}
u_{|z=0}=0.
\end{equation}

$\bullet$
At last, we assume that {\bf frictional forces} $\cF$ are given by
$$
\cF=\Delta_h u + \nu \d_{zz} u,
$$
 such a choice is classical in the rotating fluids literature, see for instance \cite{CDGG,M,MR}. We refer to paragraph \ref{oceanic} for an attempt of justification in a geophysical context.

Hence, our goal is to study the asymptotic behaviour as $\eps\to 0$ of the solution of
\begin{equation}
\label{NS1}
\begin{aligned}
\d_t u + \frac{1}{\eps } e_3\wedge u +\nabla p -  \Delta_h u - \nu \d_{zz} u =0 \,,\\
\nabla \cdot u =0\,,
\end{aligned}
\end{equation}
supplemented with the boundary conditions \eqref{top}-\eqref{bottom}, depending of the order of magnitude of the vertical viscosity $\nu$.

\medskip
\subsection{Formal study of the asymptotics} $ $

The system \eqref{top}-\eqref{NS1} has already been studied by several authors, see for instance \cite{M,DG}, and also \cite{CDGG,MR} when Dirichlet boundary conditions are enforced at the top and at the bottom. Before describing the precise issues we wish to study in the present paper, let us recall briefly some of the main results and techniques for singular perturbation problems.

$\bullet$ The first step is to determine the {\bf geostrophic motion}. The only way to control the Coriolis force as $\eps \to 0$ is to balance it with the pressure gradient term (see for instance \cite{LTW}). Hence in the limit, $e_3\wedge u$ must be a gradient
\be\label{eq:geostrophic} e_3\wedge \bar u^{int}_{mean}=-\nabla p\ee
which leads to
$$ u^{int}_{mean} =\nabla_h^\perp p$$
where the limit pressure and thus the limit velocity  are independent of $z$.
In particular, $ u^{int}_{mean}$ is a two-dimensional, horizontal, divergence-free vector-field. The fluid being limited by rigid boundaries, from above and below, the divergence-free condition leads indeed to $u_3=0$ (at least to first order in~$\eps$). 
In other words, all the particles which have the same $x_h$ have the same velocity. The particles of fluid move in vertical columns, called  Taylor-Proudman columns. That is the main effect of rotation and a very strong constraint on the fluid motion.

As the domain evolution is limited by two parallel planes, the height of Taylor-Proudman columns is constant as time evolves, which is compatible with the incompressibility constraint. We can then prove that the columns move freely and in the limit of high rotation the fluid behaves like a two-dimensional incompressible fluid. Integrating the motion equation (\ref{NS1}) with respect to $z$ and taking formal limits as $\eps \to 0$ leads indeed  to
\begin{equation}
\label{formal-mean}
\begin{aligned}
\d_t  u^{int}_{mean}+\nabla_h p = \Delta_h  u^{int}_{mean},\\
\nabla_h \cdot  u^{int}_{mean} =0\,.
\end{aligned}
\end{equation}

Note however that on the boundary of the domain, where the velocity is prescribed, the $z$ independence is violated. That leads to {\bf vertical boundary layers}  modifying the limit equation (\ref{formal-mean}),  which will be investigated in the rest of the paper.

\medskip
$\bullet$ Before starting with the precise study of these boundary layers, let us now describe what happens for the three-dimensional {\bf ageostrophic part} of the initial data, i.e. the part of the initial data that does not satisfy the geostrophic constraint \eqref{eq:geostrophic}. 
The dominant process is then governed by the  Coriolis operator
\be L:u\in V_0\mapsto \bP(  e_3\wedge u )\in V_0 ,\label{Coriolis}\ee
where  $V_0$ denotes the subspace of $L^2(\omega)$ of divergence-free vector fields having zero flux both through the bottom and through the surface
$$V_0=\{ u\in L^2([0,1]\times \bT^2)\quad /\quad  \nabla\cdot u=0\hbox{ and } u_{3|z=0}=u_{3|z=1} =0\},$$
and $\bP$ denotes the orthogonal projection onto $V_0$ in $L^2(\omega)$. Notice that in general, $V_0$ is strictly smaller than the space of divergence-free vector fields in $L^2(\om)$, and consequently 
$\bP$ is different from the Leray projector.

The equation
$$\eps \d_t u+ Lu=0$$
 turns out to describe the propagation of waves, called  Poincar\'e waves. More precisely, 
one can prove (see for instance \cite{LTW}, \cite{CDGG} and Appendix A at the end of this paper for more details) that there exists a hilbertian basis of $V_0$, denoted by $(N_k)_{k\in\bZ^3\setminus\{0\}}$, constituted of eigenvectors of the linear penalization: for all~$k\in\bZ^3\setminus\{0\}$, we have
\be\label{basis}
LN_k=\bP(e_3\wedge N_k)=i\lambda_kN_k,\quad\text{where }\lambda_k = -\frac{k_3\pi}{\sqrt{|k_h|^2 + (\pi k_3)^2}}.
\ee

That means that the three-dimensional part of the initial data generates waves, which propagate very rapidly in the domain (with a speed of order $\eps^{-1}$).
The time average of these waves vanish, like their weak limit, but they carry a non-zero energy. 

\medskip
\subsection{Resonant forcing}

In view of the remarks of the previous paragraph, it seems interesting, in order to study possible resonances between the surface stress and the Coriolis operator $L$, to consider  in \eqref{top}  a stress tensor of the form
$$
\sigma^\eps(t,x_h)=\sigma\left(\frac{t}{\eps}, x_h \right),
$$
with $\sigma\in L^\infty([0,\infty) \times \bT^2)$ almost periodic in its first variable, i.e.
\be\label{hyp:sigma}
\sigma(\tau,x_h)=\sum_{k_h\in \bZ^2}\sum_{\mu \in M} \hat \sigma(\mu,k_h) e^{i\mu \tau} e^{ik_h\cdot x_h},
\ee
where $M$ is a finite set. The corresponding boundary layer terms are then expected to oscillate with the frequencies $\mu/\eps$, with either $\mu\in M$ or $\mu=-\lambda_k$ for some $k\in\bZ^3.$ The construction of such boundary layer terms is relatively well understood (see for instance \cite{CDGG,M,MR}), insofar as $\mu\neq \pm 1$. When $|\mu|=1$, the classical construction of boundary layers fails; the usual way to get round this difficulty is to assume that the initial data and the stress tensor satisfy some spectral assumptions, in order to avoid the apparition of the frequencies $\mu=\pm 1$ altogether.

Our goal in this paper is precisely to study the {\bf influence of such resonant frequencies }on the global behaviour of the fluid, starting with the boundary layers. To that end, we have developed a systematic way of computing the boundary layer profiles associated with some given boundary conditions; our main result in that regard is stated in the next paragraph, and proved in section \ref{boundary-decomposition}. Next, we use the boundary layer profiles so defined in order to construct an approximate solution for equation \eqref{NS1}, supplemented with \eqref{top}-\eqref{bottom}, and we prove a strong convergence result for \eqref{NS1}.

\section{Main results}

\label{results-par}

\subsection{Description of the boundary layers} $ $

We begin with the construction of boundary layers. Let us first emphasize that since equation \eqref{NS1} is linear, we can work with a finite number of Fourier modes in the horizontal domain  and in  time. Note that on the contrary, because of the boundary conditions at $z=0$ and $z=1$, there is a strong coupling between the vertical modes.

Hence, let $N>0$ be an arbitrary integer, and let $M_0$ be a finite set such that $M\subset M_0$. We consider some arbitrary boundary conditions $\delta^0_h$ and $\delta^1_h$ which take the form
\be
\delta^j_h(\tau,x_h)= \sum_{|k_h|\leq N} \sum_{\mu\in M_0}\hat\delta^j_h(\mu,k_h) e^{i\mu \tau} e^{ik_h\cdot x_h},\quad j=0\text{ or }1.\label{hyp:deltajh}
\ee
Here and in the whole paper, the superscript $0$  (resp. $1$) stands for functions associated with some boundary conditions at the bottom (resp. at  the surface).

Our goal is to construct some stationnary boundary layer profiles, denoted by $v^0, v^1,$ which have respectively exponential decay with respect to $z$ and $1-z$, are exact solutions of equation \eqref{NS1}, and satisfy 
\be
\begin{aligned}
v^0_{h|z=0}(t,x_h)=\delta^0_h\left( \frac{t}{\eps} , x_h\right),\\ 
\d_zv^1_{h|z=0}(t,x_h)= \delta^1_h\left( \frac{t}{\eps} , x_h\right).
\end{aligned}
\label{CL:vj}\ee
Notice that we do not enforce boundary conditions on both sides for $v^j_h$, and that we do not specify the boundary condition on the vertical component of each function $v^j$: indeed, the vertical component of $v^j$ is dictated by the assumption that $v^j$ is divergence free and that its dependance on the vertical variable $z$ is given by a decaying exponential. Similarly, the trace of $v^0$ at $z=1$ is imposed by the exponential profile condition.
At last, we do not specify any initial data for $v^0,v^1$, for the same reasons as above; we only require that $\| v^j_{|t=0}\|_{L^2}=o(1)$ as $\eps,\nu\to 0.$

However that construction fails if some particular coefficients $\hat \delta^j(k_h,\mu)$ in the boundary condition are not identically zero (see Remark \label{resonant-rmk} on page 15).
This leads to the following definition:

\begin{Def}\label{def:nonres}Assume that the boundary conditions $\delta^j_h$ are given by
$$
\delta^j_h(t,x_h)=\sum_{k_h}\sum_{\mu}\hat\delta^j_h(\mu,k_h) e^{ik_h\cdot x_h} e^{i\mu\frac{t}{\eps}}.
$$

 We define the {\bf resonant part $\delta^j_{h,res}$ } of the boundary conditions by
$$
\delta^j_{h,res}:={1\over {2}}  \La \begin{pmatrix} 1 \\i \\ 0 \end
{pmatrix}\big| \hat\delta^j_h(1,0) \Ra  \begin{pmatrix} 1 \\i \\ 0 \end{pmatrix}
e^{i\frac t\eps}+{1\over {2}}  \La \begin{pmatrix} 1 \\-i \\ 0 \end
{pmatrix}\big| \hat\delta^j_h(-1,0) \Ra  \begin{pmatrix} 1 \\-i \\ 0 \end
{pmatrix} e^{-i\frac t\eps}$$
 We will say that a boundary condition $\delta^j_h$ is {\bf non-resonant} if $\delta^j_{h,res}=0$.

\end{Def}

In the resonant case, we will indeed see that the boundary profiles are not stationnary. More precisely, we will prove the following result

\begin{Thm}
 Let $\delta^0_h, \delta^1_h$ be given by \eqref{hyp:deltajh}. Then there exist $v^0, v^1$ which are exact solutions of \eqref{NS1} supplemented with \eqref{CL:vj}, and such that $v^0$ decays exponentially with $z$, and $v^1$ with $1-z$. Moreover, each function $v^j$ ($j=0$ or $1$) can be written as 
$$
v^j= \bar v^j + \tilde v^j + v^j_{res}
$$
where the stationnary boundary profiles $\bar v^j$, $\tilde v^j$  satisfy the following estimates 
\begin{equation}
\begin{aligned}
\| \bar v^j_h \|_{L^\infty(\bR^+,L^2(\omega))}+ \frac{1}{\sqrt{\eps\nu}}\| \bar v^j_3 \|_{L^\infty(\bR^+,L^2(\omega))}\leq C(\eps \nu)^{\frac{1+ 2j}{4}} \|\delta^j_h\| ,\\
\| \tilde v^j_h \|_{L^\infty(\bR^+,L^2(\omega))} + \left(\frac{\eps + \sqrt{\eps\nu}}{\eps\nu}\right)^\frac12 
\| \tilde v^j_3 \|_{L^\infty(\bR^+,L^2(\omega))}\leq C \left(\frac{\eps\nu}{\eps + \sqrt{\eps\nu}}\right)^\frac{1+2j}{4} \| \delta^j_h\|,\\
\end{aligned}\label{B-cont}
\end{equation}
while the resonant part $v^j_{res}$ satisfies
\begin{equation}
\label{B-res-cont}
\forall t\geq 0,\quad\| v^j_{res,h}(t)\|_{L^2(\omega)} \leq C (\nu t)^\frac{1+ 2j}{4} \| \delta_{h, res}^j\|,\quad v^j_{res,3}\equiv 0\,,
\end{equation}
where 
$$\|\delta_h^j\| = \sum_{\mu\in M_0} \sum _{|k_h|\leq N}  |\hat\delta^j_h(\mu,k_h)|^2$$
and $C$ is a nonnegative constant depending on $N$.

\label{BL-thm}

\end{Thm}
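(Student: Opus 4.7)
By linearity and the finiteness of both sums in \eqref{hyp:deltajh}, I would construct $v^j$ mode by mode and recombine at the end. Fix a Fourier mode $(\mu, k_h)$ of $\delta^j_h$ and make the separated ansatz $v^j = V^j(z) e^{i\mu t/\eps} e^{ik_h\cdot x_h}$, $p^j = P^j(z) e^{i\mu t/\eps} e^{ik_h\cdot x_h}$. Injected into \eqref{NS1}, this reduces the problem to an ODE system in $z$ for $(V^j, P^j)$, coupled with the boundary trace \eqref{CL:vj} and the requirement that $V^j$ decays exponentially in $z$ (for $j=0$) or in $1-z$ (for $j=1$).

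\textbf{Non-resonant modes.} Project $V^j_h$ onto the circularly polarized basis $f_\pm := (1, \pm i, 0)^T/\sqrt 2$, which diagonalizes $e_3\wedge\cdot$ on the horizontal plane with eigenvalues $\mp i$. The horizontal components decouple (up to pressure) and admit exponential profiles $e^{-\lambda_\pm z}$ whose characteristic rates satisfy $\lambda_\pm^2 = |k_h|^2/\nu + i(\mu \mp 1)/(\eps\nu)$, with the root selected so that $\mathrm{Re}\,\lambda_\pm > 0$. When $(\mu, k_h) \neq (\pm 1, 0)$ both rates are nonzero, and the amplitudes are fixed by matching to \eqref{CL:vj}; the pressure is then recovered from the incompressibility equation. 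Two length scales emerge: the Ekman scale $\mathrm{Re}\,\lambda \sim (\eps\nu)^{-1/2}$, producing the component I would label $\bar v^j$, and the slower scale where $\mathrm{Re}\,\lambda \sim \sqrt{(\eps + \sqrt{\eps\nu})/(\eps\nu)}$, dominant when $\eps|k_h|^2 \gtrsim 1$ or near a near-resonance, producing $\tilde v^j$. Using $\|e^{-\lambda z}\|_{L^2(\bR_+)}^2 = (2\,\mathrm{Re}\,\lambda)^{-1}$, and noting that the Neumann condition for $j=1$ costs a further factor $(\mathrm{Re}\,\lambda)^{-1}$ in amplitude, one recovers precisely the exponent $(1+2j)/4$ in \eqref{B-cont}. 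The vertical component $V^j_3$, computed from $\d_z V^j_3 = -ik_h \cdot V^j_h$, gains an extra factor equal to the layer width, accounting for the prefactors on $\bar v^j_3$ and $\tilde v^j_3$.

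\textbf{Resonant modes.} When $\mu = \pm 1$ and $k_h = 0$, the root $\lambda_\mp$ degenerates to zero and no stationary exponential can absorb the corresponding component of the boundary data. I would instead take the time-dependent ansatz $v^j_{res} = W^j(t, z) f_\mp e^{\pm i t/\eps}$. Since $k_h = 0$ kills the horizontal Laplacian and the prefactor $e^{\pm i t/\eps}$ exactly cancels the Coriolis contribution on the polarization $f_\mp$, equation \eqref{NS1} collapses to the one-dimensional heat equation $\d_t W^j = \nu\,\d_{zz} W^j$ on $z \geq 0$ for $j=0$ (resp.\ $z \leq 1$ for $j=1$), with zero initial data and boundary value equal to the coefficient of $\hat\delta^j_{h, res}$. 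The solution is explicit via the complementary error function, and the substitution $s = z/(2\sqrt{\nu t})$ in $\|W^j(t)\|_{L^2_z}$ produces exactly the factor $(\nu t)^{(1+2j)/4}$ in \eqref{B-res-cont}. The vanishing $v^j_{res, 3} \equiv 0$ is automatic: since $k_h = 0$, incompressibility reduces to $\d_z V_3 = 0$, and the no-penetration condition at the opposite boundary forces $V_3 \equiv 0$.

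\textbf{Main obstacle.} The hardest step will be the non-resonant construction in the intermediate regime $\eps|k_h|^2 \sim 1$, or where $|\mu \mp 1|$ is comparable to $\eps|k_h|^2$: there the two rates $\lambda_+$ and $\lambda_-$ become comparable in modulus and phase, so the $2 \times 2$ linear system determining their amplitudes from the boundary trace can degenerate. Bounding the inverse of that system uniformly in $\eps, \nu$ and identifying the correct threshold $\eps + \sqrt{\eps\nu}$ appearing in the $\tilde v^j$ estimate is the technical heart of the argument. The initial-data requirement $\|v^j_{|t=0}\|_{L^2} = o(1)$ is comparatively routine: for the stationary profiles it is subsumed by \eqref{B-cont}, and for the resonant part it follows from the zero initial condition of the heat equation.
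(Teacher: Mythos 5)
Your overall strategy closely parallels the paper's: a mode-by-mode exponential ansatz, two decay rates per horizontal Fourier mode, and in the truly resonant case ($|\mu|=1$, $k_h=0$) a reduction to the 1D heat equation with a self-similar profile and the factor $(\nu t)^{(1+2j)/4}$ from a change of variables in the $L^2_z$ integral. The treatment of $v^j_{res,3}\equiv 0$ is also exactly right.

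There is, however, a genuine gap in your non-resonant construction. You write the characteristic rates as $\lambda_\pm^2 = |k_h|^2/\nu + i(\mu\mp 1)/(\eps\nu)$, obtained by diagonalizing $e_3\wedge\cdot$ on $f_\pm$ and ``neglecting pressure.'' That cannot be correct in the regime you yourself flag as the hard one. The pressure gradient does not commute with the $f_\pm$ projection: through the divergence-free condition, $\d_z v_3 = -ik_h\cdot v_h$ couples the $f_+$ and $f_-$ channels whenever $k_h\neq 0$, and the induced pressure term in the horizontal equations carries a prefactor of the form $\eps\nu k_h^2/\lambda^2$ (in the paper's normalized variables). This is completely negligible when $\lambda = O(1)$ --- which is why the classical Ekman analysis can ignore it --- but it is \emph{not} negligible in the quasi-resonant regime $|\mu|=1$, $k_h\neq 0$, where $\lambda$ becomes small. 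Keeping that term turns the characteristic equation into (schematically) $\lambda^2 = \eps k_h^2 + c\,\eps\nu k_h^2/\lambda^2 + \cdots$, which is exactly what produces $\lambda^2 \sim \eps k_h^2$ when $\nu\ll\eps$, but $\lambda^2\sim \sqrt{\eps\nu}|k_h|$ when $\nu\gg\eps$. Your simplified formula gives only the first branch, for all $\nu/\eps$. So while you correctly state the target threshold $\mathrm{Re}\,\lambda\sim\sqrt{(\eps+\sqrt{\eps\nu})/(\eps\nu)}$ and identify this as the technical heart, the characteristic equation you have written down would not actually produce the $\sqrt{\eps\nu}$ contribution; you need to retain the pressure coupling (as the paper does via the full $2\times 2$ matrix $A_\lambda$ containing the $\eps\nu k_i k_j/(\lambda^2-\eps\nu k_h^2)$ entries, and the associated asymptotic expansion of $\det A_\lambda=0$). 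A second, smaller caution: you must also verify, as the paper does, that the two eigenvectors $w_{\lambda^-}, w_{\lambda^+}$ remain a uniformly well-conditioned basis of $\bC^2$ in the quasi-resonant regime, so that the amplitude system is invertible with uniformly bounded inverse --- your sketch asserts this should be possible but does not indicate how.
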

Theorem \ref{BL-thm} will be proved in section \ref{boundary-decomposition}. The definition of the boundary layer operator $\mathcal B$ is then as follows:
\begin{Def} Let $\delta^0_h, \delta^1_h$ be given by \eqref{hyp:deltajh}. 
We denote by $\mathcal B$ the bilinear operator such that with the notations of Theorem \ref{BL-thm},
$$
\begin{aligned}
v^0=\mathcal B(\delta_h^0, 0),\\
v^1 =\mathcal B(0,\delta_h^1).
\end{aligned}
$$

\label{BL-def}
\end{Def}

\begin{Rmk}\label{BL-rmk}
{\bf (i)} As we shall see in the course of the proof, the terms $\bar v^j$ correspond to the {\bf usual Ekman layers}, for which the typical size of the boundary layer is $\sqrt{\eps\nu}.$ The corresponding boundary conditions are given by
$$ \bar \delta^j_h(\tau,x_h)= \sum_{|k_h|\leq N} \sum_{|\mu|\neq 1}\hat\delta^j_h(\mu,k_h) e^{i\mu \tau} e^{ik_h\cdot x_h}.$$
On the contrary, the terms $\tilde v^j$ are due to the {\bf quasi-resonant modes}, for which $|\mu|=1$ and $k_h\neq 0;$ for these modes, the typical size of the boundary layer is much larger, of order $\sqrt{\eps\nu}/ (\sqrt{\eps} + (\eps\nu)^{1/4}).$ 
$$ \bar \delta^j_h(\tau,x_h)= \sum_{k_h\neq 0} \sum_{|\mu|= 1}\hat\delta^j_h(\mu,k_h) e^{i\mu \tau} e^{ik_h\cdot x_h}.$$

{\bf (ii)}  The last terms $v^j_{res}$ are due to {\bf resonant forcing} on the modes $|\mu|=1, k_h=0$. Notice that for these modes, the estimate is not global in time: indeed, the typical size of the boundary layer is $\sqrt{\nu t}.$ 
$$ \bar \delta^j_h(\tau,x_h)=  \sum_{|\mu|= 1}\hat\delta^j_h(\mu,0) e^{i\mu \tau}.$$
In particular, for large times ($t\gg \nu^{-1}$), the boundary layer penetrates the interior of the fluid.

{\bf (iii)} As outlined above, the boundary layer term $v^0$ (resp. $v^1$) does not vanish on $z=1$ (resp. on $z=0$). Precisely, we find that there exists a positive constant $C$ (depending on $N$ and $M$) such that
$$
\begin{aligned}
\bar v^0_{|z=1}= O \left( \exp\left( -\frac{C}{\sqrt{\eps\nu}} \right) \right), \ 
\bar v^1_{|z=0}= O \left( \sqrt{\eps\nu} \exp\left( -\frac{C}{\sqrt{\eps\nu}} \right) \right), \\
\tilde v^0_{|z=1}= O \left( \exp\left( -\frac{C}{(\eps\nu)^{1/4}} \right) \right), \ 
\tilde v^1_{|z=0}= O \left( (\eps\nu)^{1/4} \exp\left(- \frac{C}{(\eps\nu)^{1/4}} \right) \right), \\
v^0_{res|z=1}=O \left( (\nu t)^{1/2}\exp\left(- \frac{1}{{4\nu t}} \right) \right),\ 
v^1_{res|z=0}=O \left( (\nu t)^{3/2}\exp\left(- \frac{1}{{4\nu t}} \right) \right).
\end{aligned}
$$

\end{Rmk}

\subsection{Construction of approximate solutions to \eqref{NS1}-\eqref{top}\eqref{bottom}} $ $

Once the mechanism of construction of boundary layers is understood, one possible application lies in the definition of an approximate solution of equation \eqref{NS1}, with a view to derive a limit system for this equation. This approximate solution is the sum of boundary  terms $u^{BL}$, obtained as above, and interior terms $u^{int}$.

\medskip
$\bullet$ Hence, we now explain the asymptotic behaviour of the {\bf interior part of the solution}.
Following the multi-scale analysis initiated in the previous paragraph, we expect the solution $u_\eps$ to \eqref{NS1}  to  behave like some function  $\exp(-tL/\eps )u^{int}_L (t)$, where $L$ is the Coriolis operator defined by \eqref{Coriolis}.

In order to understand the evolution with respect to the slow time variable, the idea is then to get rid of the penalization term by  filtering out the oscillations in equation \eqref{NS1} (see \cite{grenier,schochet}), that is, by composing equation \eqref{NS1} by the Coriolis semi-group $\exp(tL/\eps)$. 

The filtered function $u_{\eps,L}(t):=\exp(tL/\eps)u_\eps (t)$ satisfies a linear equation with vanishing viscosity (and without any penalization term); passing to the limit in the latter yields the so-called `envelope equation'
\be
\begin{aligned}
\d_t u^{int}_L -\Delta_h u^{int}_L  + \sqrt{\frac{\nu}{\eps}} S_{Ekman}u^{int}_L =0,\\
u^{int}_{L|t=0}=\gamma,\label{envelope}
\end{aligned}
\ee
where  $S_{Ekman}:V_0\to V_0$ is a linear, positive and continuous operator resulting from the non commutation between the vertical Laplacian $\nu \Delta_z$ with boundary conditions and the Coriolis semi-group (see \cite{CDGG} and \eqref{def:Ekman_op} below for a precise definition).

\medskip
$\bullet$ The approximation of the function $u_\eps$ constructed in this paper is actually much more precise than the mere function $\exp(-tL/\eps ) u_L^{int}$. Indeed, we will need to build {\bf boundary and corrector terms}, which are all small in $L^2$ norm, and thus do not play a role in the final convergence result, but are necessary in order that equation \eqref{NS1} is approximately satisfied.

\subsection{Convergence result}

\begin{Thm}Let $\gamma\in V_0$, and let  $\sigma$ be given by \eqref{hyp:sigma}.  
Let $u_\eps \in \mathcal C(\bR^+, V_0)\cap L^2_{\text{loc}}(\bR^+, H^1(\bT^2\times [0,1]))$ be the unique solution of \eqref{NS1} supplemented with \eqref{top}-\eqref{bottom}, and  let 
$u^{int}_L\in \mathcal C(\bR^+, V_0)\cap L^2(\bR_+, H^1_h(\bT^2\times [0,1]))$ be the solution of equation \eqref{envelope}.

Assume that $\sigma$ has a finite number of  Fourier modes, i.e. $\sigma$ satisfies \eqref{hyp:deltajh}.

Then under the technical scaling assumption \eqref{hyp:scaling} on the parameters $\eps,\nu$ and $\beta$, we have, as $\eps,\nu\to 0$, 
\be\label{conv}
u_\eps (t)-\exp\left( -\frac{t}{\eps}L \right)u^{int}_L(t)\to 0
\ee
in
 $L^\infty_{\text{loc}}(\bR^+, L^2(\bT^2\times[0,1]))$.

\label{thm}
\end{Thm}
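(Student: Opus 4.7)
The plan is to construct an approximate solution $u_{app}^\eps$ to \eqref{NS1}-\eqref{top}-\eqref{bottom} which lies within $o(1)$ of $\exp(-tL/\eps)\,u^{int}_L$ in $L^\infty_{\text{loc}}(\bR^+, L^2(\omega))$, and then close the argument by an energy estimate on $w^\eps := u_\eps - u_{app}^\eps$. The ansatz is
\[
u_{app}^\eps(t,x) := \exp\!\left(-\tfrac{t}{\eps}L\right) u^{int}_L(t,x) + v^0(t,x) + v^1(t,x) + r^\eps(t,x),
\]
where $u^{int}_L$ solves the envelope equation \eqref{envelope}, the boundary profiles $v^j$ are produced by the operator $\mathcal B$ of Definition \ref{BL-def}, and $r^\eps$ is a small divergence-free corrector chosen to restore exact boundary conditions and exact incompressibility after the boundary layers are glued in.

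The boundary data fed to $\mathcal B$ are chosen to absorb the mismatch between the interior trace and the prescribed boundary conditions. Writing $\tau = t/\eps$, one sets
\[
\delta^0_h := -\bigl[\exp(-\tau L)\,u^{int}_L(t)\bigr]_h\big|_{z=0}, \qquad \delta^1_h := \beta\,\sigma(\tau,\cdot) - \partial_z\bigl[\exp(-\tau L)\,u^{int}_L(t)\bigr]_h\big|_{z=1}.
\]
Decomposing $u^{int}_L$ in the Poincar\'e basis $(N_k)$ shows that, after a harmless high-frequency truncation, both $\delta^j_h$ have the form \eqref{hyp:deltajh}: the fast-time frequencies are $\mu=-\lambda_k$ from the filter together with $\mu\in M$ from $\sigma$, and the horizontal modes are finitely many. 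Theorem \ref{BL-thm} then produces $v^j = \bar v^j + \tilde v^j + v^j_{res}$ with the bounds \eqref{B-cont}-\eqref{B-res-cont}. Crucially, the operator $S_{Ekman}$ in \eqref{envelope} is defined precisely so that the Ekman-pumping contribution of the classical layers $\bar v^j$ to the interior equation is cancelled by $-\sqrt{\nu/\eps}\,S_{Ekman}\,u^{int}_L$ when $u_{app}^\eps$ is inserted into \eqref{NS1}. What remains in the residual $F^\eps$ is: (a) contributions from $\tilde v^j$ and $v^j_{res}$, (b) horizontal diffusion of the thin boundary profiles, (c) the exponentially small opposite-side traces listed in Remark \ref{BL-rmk}(iii), and (d) commutators between $\bP$ and slow-time derivatives acting on the boundary-layer amplitudes.

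With $F^\eps$ identified, I would test the equation $\partial_t w^\eps + \tfrac{1}{\eps} e_3 \wedge w^\eps + \nabla p = \Delta_h w^\eps + \nu\,\partial_{zz} w^\eps + F^\eps$ against $w^\eps$. The Coriolis and pressure terms vanish by skew-symmetry and incompressibility. By construction of $v^0$ and $v^1$, the function $w^\eps$ satisfies homogeneous Dirichlet at $z=0$ and homogeneous Neumann at $z=1$ up to exponentially small terms, so the boundary contributions from the viscous integration by parts are negligible. Gronwall's inequality applied to the resulting dissipative identity yields $\|w^\eps\|_{L^\infty([0,T], L^2)} \to 0$, provided $\|F^\eps\|_{L^2([0,T], H^{-1})}$ and $\|w^\eps|_{t=0}\|_{L^2}$ both tend to zero. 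Combined with $\|v^0+v^1+r^\eps\|_{L^\infty([0,T], L^2)} = o(1)$, which follows from \eqref{B-cont}-\eqref{B-res-cont}, this gives \eqref{conv}.

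The main obstacle is the resonant component $v^j_{res}$. Unlike the classical Ekman profiles, it admits no quasi-stationary description: its $L^2$ norm grows like $(\nu t)^{(1+2j)/4}$ and its profile diffuses into the interior with thickness $\sqrt{\nu t}$, so neither its horizontal Laplacian nor its slow-time derivative is automatically small. This is precisely what forces the statement to be local in time and motivates the technical scaling \eqref{hyp:scaling}: it balances $\eps$, $\nu$ and $\beta$ so that on any fixed $[0,T]$ the resonant contribution to $F^\eps$ remains $o(1)$ in $L^2([0,T], H^{-1})$. A secondary difficulty is the thicker quasi-resonant layer $\tilde v^j$, of typical size $\sqrt{\eps\nu}/(\sqrt\eps + (\eps\nu)^{1/4})$, whose vertical-viscosity residual is larger than at the Ekman scale and which must also be verified compatible with \eqref{hyp:scaling}; once these two points are checked, the rest of the energy argument is routine.
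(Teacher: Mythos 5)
Your overall strategy (build an approximate solution close to $\exp(-tL/\eps)u^{int}_L$, then close with an energy estimate and Gronwall) is exactly the paper's strategy, so the high-level architecture is fine. However, the corrector $r^\eps$ hides the genuinely hard part of the proof, and as stated your argument has a gap there.

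The issue is this: the quasi-resonant part $\tilde v^1$ of the boundary layer generated by the wind has a vertical trace at $z=1$ of order $\beta(\eps\nu)^{1/2}$ (not exponentially small), so one must add an interior divergence-free corrector $v^{int,1}$ to restore the zero-flux condition. This $v^{int,1}$ is indeed small in $L^2$, of order $\beta\nu$. But when you insert it into the penalized equation, the term $\tfrac1\eps\,e_3\wedge v^{int,1}$ produces a defect $\Sigma$ whose $L^2$ norm is of order $\beta\sqrt{\nu/\eps}$, which is \emph{not} $o(\sqrt\nu)$ in general. So your $F^\eps$ is not small in $L^2([0,T],H^{-1})$, and the energy estimate does not close. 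The paper resolves this by observing that $\Sigma$ oscillates at frequency $1/\eps$ on all but the resonant modes, and then introducing a further corrector $\delta u^{int,1}_K$ via Duhamel plus a small-divisor estimate (Lemma \ref{smalldivisor}), together with a delicate choice of truncation parameter $K=K(\eps,\nu)$ balancing the $L^2$ (or $H^{-1}$) smallness of the truncation error $\bP\Sigma-\bP_K\Sigma$ against the $H^s$ growth of $\delta u^{int,1}_K$. It is \emph{this} balancing act, not the resonant layer $v^j_{res}$, that generates the various constraints \eqref{cond_beta:2}--\eqref{cond_beta:4} and hence the scaling assumption \eqref{hyp:scaling}; you have it backward when you say the resonant component drives the hypothesis. (The resonant layer alone only forces $\beta\nu^{3/4}=o(1)$, which is much weaker and already implied by the others.) In addition, each new interior corrector reopens the boundary condition at $z=0$, which must be patched with a further boundary-layer term, and the iteration is terminated only once the remaining mismatch satisfies the quantitative smallness of the stopping Lemma \ref{stop}; your ``chosen to restore exact boundary conditions'' step glosses over all of this. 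Finally, the paper treats the wind-forced part ($\gamma=0$) and the initial-data part ($\sigma=0$) separately by linear superposition, because in the latter the interior term and the bottom boundary layer are genuinely coupled through the Ekman pumping and the equation for $c_k$ must be derived self-consistently; your single ansatz obscures this coupling, though it could in principle be untangled.
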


\begin{Rmk}
{\bf (i)} That result extends previous works by Masmoudi \cite{M} and Chemin, Desjardins, Gallagher and Grenier \cite{CDGG}. They have indeed studied analogous boundary problems for rotating fluids, but have used in a crucial way  a spectral assumption on the forcing modes, which ensures that the forcing is non-resonant, or in other words that the boundary layers remain stable.

 {\bf (ii)} The above theorem holds for all values of the ratio $\nu/\eps$, but the asymptotic behaviour of $\bu_L$ depends on the scaling of $\nu/\eps$.
 
  Note that, in the case when $\eps \gg\nu$, the effects of the boundary terms, even damped by the penalization, remain localized in the vicinity of the surface and thus do not contribute to the mean motion.
  
  If $\nu/\eps\to \infty$, the vertical dissipation  damped by the penalization induces a strong relaxation mechanism, so that we expect the solution to be well approximated, outside from some initial layer, by a ``stationary" solution to the wind-driven system. That initial layer should be of size $O\left( \sqrt{\eps \over \nu}\right)$ and the relaxation should be governed by the Ekman dissipation process \eqref{envelope}.

 {\bf (iii)} If the forcing $\sigma $ bears on resonant modes only, then we are able to prove  a global  result. Precisely, assume that
$$
\sigma(\tau,x_h)=\hat \sigma^+ e^{i\tau}(1,i) + \hat \sigma^- e^{-i\tau}(1,-i).
$$
Then there exists some destabilization profile $v_\nu$ solution of the heat equation (\ref{heat-eq}) such that
\be
u_\eps(t)-\left[ \exp\left( -\frac{t}{\eps}L \right)(u^{int}_L(t) + v_\nu (t)\right]\to 0\label{conv_bis}
\ee
in $L^\infty(\bR^+, L^2(\bT^2\times[0,1]))\cap L^2(\bR^+, L^2(\bT^2\times[0,1]))$. 

In particular, for large times, $$u(t)\approx\exp\left( -\frac{t}{\eps}L \right) v_\nu (t)=O(\beta).$$
Since $\beta$ may be very large (see \eqref{hyp:scaling}), there is a { destabilization of the whole fluid} inside the domain as $t\to\infty.$
Note that the two convergences \eqref{conv} and \eqref{conv_bis} are compatible, since with assumption \eqref{hyp:scaling}, $$v_\nu=O(\nu ^{3/4}\beta)=o(1)\hbox{ in }L^2([0,T]\times \bT^2\times [0,1])$$
 for any finite time $T>0$.

\label{thm-rmk}

\end{Rmk}

\subsection{Method of proof}$ $

 Let us now give some details about our method of proof.
As the evolution equation is linear, we will  use {some superposition principle}, meaning that we will deal separately with the forcing and with the initial condition. 

\medskip
$\bullet$ More precisely, we will consider on the one hand  the wind-driven system
\begin{equation}
\label{forcing}
\begin{aligned}
\d_t u +\frac1\eps \bP(e_3\wedge u)  \Delta_h u-\nu \d_{zz} u=0,\\
\nabla \cdot u =0,\\
u_{|t=0} =0,\\
u_{|z=0}=0,\\
u_{3|z=1}=0,\quad \d_z u_{h|z=1}=\beta\sigma^\eps.
\end{aligned}
\end{equation}
For that system, we will construct an approximate solution constituted of a boundary term $u^{BL,1}$ localized near the surface, and some interior term $v^{int,1}$, which accounts for the fact that the vertical component of $u^{BL,1}$ does not match the no-flux boundary condition at the surface (see  Remark \ref{BL-rmk} \textbf{(ii)}). 

The convergence of the modes such that $|\mu|\neq 1$ is then proved using a somewhat soft argument, which can be applied with a crude approximation. 

Concerning the quasi-resonant modes, for which $|\mu|=1$ and $k_h\neq 0$, the situation is more complicated, and we have to build several correctors before reaching the adequate order of approximation.

\medskip
$\bullet$
On the other hand, we will study the initial value problem
\begin{equation}
\label{initial}
\begin{aligned}
\d_t u +\frac1\eps \bP(e_3\wedge u) -\nu_h \Delta_h u-\nu \d_{zz} u=0,\\
\nabla \cdot u =0,\\
u_{|t=0} =\gamma,\\
u_{|z=0}=0,\\
u_{3|z=1}=0,\quad \d_z u_{h|z=1}=0.
\end{aligned}
\end{equation}
Here we will use, following \cite{CDGG}, an energy method which requires to obtain a very precise approximation. A quantitative result about the required precision is given in the stopping condition in the Appendix (Lemma \ref{stop}): when the approximate solution satisfies the  hypotheses of Lemma \ref{stop}, we put an end to the construction of correctors and conclude thanks to an energy estimate, whence the name `stopping Lemma'. The approximate solution is actually obtained  as the sum of two interior terms $u^{int}$ that we seek in the form
$$u^{int} =\sum c_l N_l e^{-i\lambda _l \frac t\eps}$$
coming from the analysis of the linear penalization as an operator of $L^2$,  and two boundary terms $u^{BL,0}$. We emphasize that in the case $\nu=O(\eps)$, the construction of an approximate solution for system \eqref{initial} has already been dealt with by several authors (see \cite{CDGG,M}); we recall it here for the reader's convenience, and further extend it to the case when $\nu\gg \eps$.

Of course, in the nonlinear case the superposition principle does not hold anymore, and both systems (\ref{forcing}) and (\ref{initial}) will be coupled.

\bigskip
The next sections are  devoted to the proofs of Theorems \ref{BL-thm} and \ref{thm}. We start with a precise description of the boundary layer operator $\cB$ in Section \ref{boundary-decomposition}. We then build, in Section \ref{wind}, the approximation and prove the convergence for the (possibly resonant) wind-driven system (\ref{forcing}). For the sake of completeness, we finally study the system (\ref{initial}) which has already been dealt with in a number of mathematical papers. Let us recall that in both cases we need a refined approximation with many orders. We have then to iterate some process giving the successive correctors. Note however that we are not able to really obtain an asymptotic expansion leading to a more accurate approximation (in $L^2$ sense). At each step of the process the order of the resonances involved in the estimates is indeed increased, so that it is not possible to obtain convergent series. For  more precisions  regarding that point, we refer to the proof in Section \ref{int+BL-par}.

\section{The boundary layer operator}\label{boundary-decomposition}

This section is devoted to the proof of Theorem \ref{BL-thm}.

\subsection{Non-resonant case}$ $

We recall that the boundary conditions are given by \eqref{hyp:deltajh}.
 and that we  seek the boundary terms as a sum of oscillating modes, rapidly decaying in $z$. Our goal in this paragraph is to characterize these modes, or in other words to describe the propagation with respect to $z$ of the  boundary conditions
$$
v_{|z=0} =\delta^0_{h},\quad  \d_z v_{h|z=1} = \delta^1_{h}\,.$$

 We  will use the following Ansatz
$$ v(t,x) = v^0 (t,x) +v^1(t,x)$$
with
$$
v^j(t,x)=\sum_{\mu, k_h } V^j(\mu,k_h; x)\exp  \left(i\frac t\eps\mu\right) $$
where $\mu$ and $k_h$ are the  oscillation period and horizontal Fourier mode.

We further seek $ V^0(\mu,k_h) $ and $ V^1(\mu,k_h) $ in the form
\begin{equation}
\label{ansatz}
\begin{aligned}
V^0(\mu,k_h;x)=\hat v^0(\mu,k_h) \exp (ik_h\cdot x_h)\exp \left(-\lambda(\mu,k_h) {z\over \sqrt{\eps \nu}}\right),\\
V^1 (\mu,k_h;x)=\hat v^1(\mu,k_h) \exp (ik_h\cdot x_h)\exp \left(-\lambda (\mu,k_h){(1-z)\over \sqrt{\eps \nu}}\right)
 \end{aligned}
 \end{equation}
so that they are  expected to be localized in a neighbourhood of size $O(\sqrt{\eps \nu})$ respectively near the bottom and near the surface. Note in particular that, with such a choice, $v^0$ (resp. $v^1$) introduces only exponentially small error terms on the surface (resp. at the bottom).

Plugging this Ansatz in the system (\ref{NS1}) we get actually
 \begin{equation}
\label{linear-ansatz}
\begin{aligned}
i\mu \hat v_1-\lambda^2 \hat v_1+\eps k_h^2 \hat v_1 - \hat v_2 +\eps \nu {k_1k_2 \hat v_1 -k_1^2 \hat v_2 \over \lambda^2-\eps \nu k_h^2}=0,\\
i\mu \hat v_2-\lambda^2 \hat v_2+\eps k_h^2 \hat v_2 +\hat v_1 +\eps \nu {-k_1k_2 \hat v_2 +k_2^2 \hat v_1 \over \lambda^2-\eps \nu k_h^2}=0,\\
\sqrt{\eps \nu} (ik_1 \hat v_1 +ik_2 \hat v_2)\pm\lambda \hat v_3=0\,.
\end{aligned}
\end{equation}
which expresses the balance between  the forcing, the viscosity, the Coriolis force and the pressure.

\bigskip
 Denote by $A_\lambda$ the matrix corresponding to   (\ref{linear-ansatz}) 
 $$A_\lambda (\mu , k_h)=
 \begin{pmatrix}
\ds   i\mu -\lambda^2 +\eps k_h^2 +{\eps \nu k_1k_2 \over \lambda^2 -\eps \nu k_h^2} & \ds-1-{\eps \nu k_1^2\over \lambda^2-\eps \nu k_h^2}\\
 \ds1+{\eps \nu k_2^2\over \lambda^2-\eps \nu k_h^2} &\ds  i\mu -\lambda^2 +\eps k_h^2 -{\eps \nu k_1k_2 \over \lambda^2 -\eps \nu k_h^2}
 \end{pmatrix}.
 $$

\noindent Classical results on boundary layers are then based on the fact that $|\mu|\neq 1$, which ensures that the matrix
$$
\begin{pmatrix}
\ds  \mu   & \ds i \\
 \ds -i &\ds \mu \end{pmatrix}
 $$
 is hyperbolic in the sense of dynamical systems, i.e. that its eigenvalues have non zero real parts. In particular, there exist two complex numbers $\lambda =\lambda(\mu,k_h)$ with nonnegative real parts such that $\det A_\lambda =0$.
 
 This feature, as well as general properties of the system, is therefore stable by small perturbation. The method consists then in neglecting the perturbation, i.e. the pressure and horizontal viscosity terms and to compute a solution to
 $$\d_t v +e_3\wedge v-\nu \d_{zz }v =0$$
 with suitable boundary conditions.
 
 Now, if $|\mu|=1$, the matrix 
$$
\begin{pmatrix}
\ds  \mu   & \ds i \\
 \ds -i &\ds \mu \end{pmatrix}
 $$
 admits zero as an eigenvalue, and we expect its behaviour to be very sensitive to perturbations. Actually we will distinguish between two cases
 
 \begin{itemize}
 
 \item either $k_h\neq (0,0)$ and we will prove that the same type of behaviour as previously occurs, with the difference that the decay rate $\lambda$ of the singular component is anomalously small. We will thus develop a general method, which can be used independently of the size of $\lambda$ (the classical method fails since the error depends on $1/\lambda^2$).
 
 \item or $k_h=(0,0)$ and we have a bifurcation. The solution $v$ is not localized anymore.
 
  \end{itemize}

 \bigskip\noindent
 {\bf  Case when $k_h\neq (0,0)$.}
 
 $\bullet$ Let us first introduce some notations in order to define an abstract framework to deal with. For the sake of simplicity, we omit here all the parameters $\mu$ and $k_h$.

 Let  $\lambda$ be such that $\det (A_\lambda) =0$, then there exists $w_\lambda$ such that
\be A_\lambda w_\lambda =0\,.\label{def:wlambda}\ee
 In other words the vector fields $W_\lambda^0$  and $W_\lambda^1$ defined by
 \begin{equation}
 \label{vlambda-def}
 \begin{aligned}
 W_\lambda^0 (t,x) = \begin{pmatrix}\ds  w_\lambda  \\ \ds { \sqrt{\eps \nu} \over \lambda} i k_h\cdot w_\lambda\end{pmatrix} \exp (ik_h\cdot x_h) \exp (i\mu\frac t\eps)\exp \left(-\lambda {z\over \sqrt{\eps \nu}}\right)\\
  W_\lambda^1 (t,x) = \begin{pmatrix}\ds { \sqrt{\eps \nu} \over \lambda} w_\lambda  \\ \ds -{\eps \nu\over \lambda^2} i k_h\cdot w_\lambda\end{pmatrix} \exp (ik_h\cdot x_h) \exp (i\mu\frac t\eps)\exp \left(-\lambda {(1-z)\over \sqrt{\eps \nu}}\right)
  \end{aligned}
 \end{equation}
 are  exact solutions to (\ref{NS1}) satisfying respectively the horizontal boundary condition
 $$
 \begin{aligned}
 W^0_{\lambda,h|z=0}= w_\lambda \exp (ik_h\cdot x_h) \exp (i\mu\frac t\eps),\\\d_z W^0_{\lambda, h|z=1}= -{\lambda \over  \sqrt{\eps \nu} } w_\lambda \exp (ik_h\cdot x_h) \exp (i\mu\frac t\eps)\exp\left(-{\lambda\over \sqrt{\eps \nu}}\right)\,,
 \end{aligned}
 $$
and
$$\begin{aligned}
\d_z W^1_{\lambda, h|z=1}= w_\lambda \exp (ik_h\cdot x_h) \exp (i\mu\frac t\eps),\\  W^1_{\lambda,h|z=0}={ \sqrt{\eps \nu} \over \lambda}w_\lambda \exp (ik_h\cdot x_h) \exp (i\mu\frac t\eps)\exp\left(-{\lambda\over \sqrt{\eps \nu}}\right)\,.
 \end{aligned}
 $$
 We have moreover the following estimates (provided that $\frac{\lambda}{\sqrt{\eps\nu}}\gg 1$)
 \begin{equation}
 \label{vl-est}
 \begin{aligned}
  W_\lambda^0 =O(1)_{L^\infty(\bR^+, L^\infty(\Omega))},\quad W_\lambda^0 =O\left( \left( {\eps \nu \over \lambda^2}\right)^{1/4} \right)_{L^\infty(\bR^+, L^2(\Omega))},\\
 W_\lambda^1 =O\left( \left( {\eps \nu \over \lambda^2}\right)^{1/2} \right)_{L^\infty(\bR^+, L^\infty(\Omega))},\quad W_\lambda^1 =O\left( \left( {\eps \nu \over \lambda^2}\right)^{3/4} \right)_{L^\infty(\bR^+, L^2(\Omega))}.
\end{aligned}
\end{equation}

 \medskip
We  intend to build one  particular solution to  (\ref{NS1}) satisfying the horizontal boundary condition
 $$
 \begin{aligned}
 v_{ h|z=0}= \delta_h^0,\\
 \d_z v_{ h|z=1}=\delta_h^1. 
 \end{aligned}
 $$
Hence, we only have to find (for all $\mu$ and $k_h$) some $w_{\lambda^-}$ and $w_{\lambda^+}$ constituting a basis of $\bC^2$.

 \medskip
$\bullet$ In order to determine some suitable $w_{\lambda^-}$ and $w_{\lambda^+}$, we have to get some asymptotic expansions of the eigenvalues and eigenvectors of $A_\lambda(\mu,k_h)$.

In view of  the previous paragraph, at leading order, we have
$$A_\lambda = \begin{pmatrix}
\ds i\mu -\lambda^2  & \ds-1\\
 \ds1&\ds i\mu -\lambda^2 
 \end{pmatrix}+o(1)$$
 so that 
 $$\det (A_\lambda) =(i\mu-\lambda^2)^2+1+o(1)=0$$
 for $(\lambda^-) ^2=i(\mu+1)+o(1)$ or $(\lambda^+)^2=i(\mu-1)+o(1)$. We further have
$$w_{\lambda^- }= (1,-i)+o(1)\hbox{ and } w_{\lambda^+} = (1,i)+o(1)$$

\smallskip
\noindent
\underline{
For $|\mu|\neq 1$}, we choose $\lambda^-$ and $\lambda^+$ to be the roots of $\det(A_\lambda)=0$ with nonnegative real parts. The previous asymptotic equivalences are then enough to prove that $$\det (w_{\lambda^-}, w_{\lambda^+}) = 2i +o(1)$$
from which we deduce that $(w_{\lambda^-}, w_{\lambda^+})$ is a (quasi-orthogonal) basis of $\bC^2$, and that we have uniform bounds (with respect to $\eps$  sufficiently small and $\nu$ bounded) on the  transition matrix $P$ and its inverse.

\smallskip
\noindent
\underbar{
For $\mu=1$} we expect   $\lambda^-$ to be given by  $(\lambda^-)^2=2i +\eta^-$ with $\eta_-=o(1)$, and $\lambda^+$ to be given by $(\lambda^+)^2=\eta^+$ with $\eta^+ =o(1)$
$$\begin{aligned}
\det (A_\lambda) =&\left(  i\mu -\lambda^2 +\eps k_h^2 +{\eps \nu k_1k_2 \over \lambda^2 -\eps \nu k_h^2} \right) \left( i\mu -\lambda^2 +\eps k_h^2 -{\eps \nu k_1k_2 \over \lambda^2 -\eps \nu k_h^2} \right) \\
&- \left(-1-{\eps \nu k_1^2\over \lambda^2-\eps \nu k_h^2}\right) \left(1+{\eps \nu k_2^2\over \lambda^2-\eps \nu k_h^2} \right)\\
= &\left(-i-\eta^- +\eps k_h^2 +{\eps \nu k_1k_2 \over 2i}\right) \left( -i-\eta^-  +\eps k_h^2 -{\eps \nu k_1k_2 \over 2i }\right)\\
&+\left(1+{\eps \nu k_1^2\over 2i}\right) \left(1+{\eps \nu k_2^2\over 2i} \right)+o(\eps )
\end{aligned}
$$
and
$$\begin{aligned}
\det (A_\lambda) = &\left(i-\eta^+ +\eps k_h^2 +{\eps \nu k_1k_2 \over \eta^+}\right) \left( i-\eta^+  +\eps k_h^2 -{\eps \nu k_1k_2 \over\eta^+ }\right)\\
&-\left(-1-{\eps \nu k_1^2\over\eta^+}\right) \left(1+{\eps \nu k_2^2\over\eta^+} \right)+O(\eps^2\nu^2/(\eta^+)^2 )
\end{aligned}
$$
from which we deduce that
$$
\begin{aligned}
 \eta^- =\eps k_h^2 +\frac14 \eps \nu k_h^2+o(\eps)\\
\eta^+ = \eps k_h^2 +{\eps \nu k_h^2\over 2i\eta^+} + o(\sqrt{\eps \nu}) + o(\eps).
\end{aligned}
$$
We have then
$$(\lambda^-)^2 =2i+O(\eps).$$
On the other hand, a  discussion taking into account the relative sizes of $\eps$ and $\nu$ shows that
$$(\lambda^+)^2 \sim \eps k_h^2 \hbox{ if } \nu<<\eps, \quad (\lambda^+)^2 \sim \pm \frac12\sqrt{\eps \nu}|k_h|(1+i)$$
while an easy argument of homogeneity gives
$$(\lambda^+)^2 \sim C(k_h) \eps \hbox{ if } \nu\sim \eps,$$
for some constant $C(k_h)$, depending only on $k_h$.
Thus there exists a constant $C(k_h)$ such that
\be\label{est:lambda+-1}
\begin{aligned}
 C(k_h)^{-1}\leq |\lambda^-(1,k_h)| \leq C(k_h),\\
C(k_h)^{-1}(\eps + \sqrt{ \eps\nu})^{1/2}\leq |\lambda^+(1,k_h)| \leq C(k_h)(\eps + \sqrt{ \eps\nu})^{1/2}.
\end{aligned}
\ee

Plugging these expansions in the formula of $A_\lambda$ leads then to
$$\begin{aligned}
w_{\lambda^- }= (1,-i+O(\eps)),\\
w_{\lambda^+} = (1,i+O(\sqrt{\eps\nu}) +  O(\eps))
\end{aligned}
$$
In particular we have
$$\det (w_{\lambda^-}, w_{\lambda^+}) = 2i +O(\eps) +O(\sqrt{\eps \nu})$$
from which we deduce that $(w_{\lambda^-}, w_{\lambda^+})$ is a (quasi-orthogonal) basis of $\bC^2$, and that we have uniform bounds (with respect to $\eps$ and $\nu$ sufficiently small) on the  transition matrix $P$ and its inverse.

\smallskip
\noindent
\underline{
For $\mu=-1$} we have in the same way
\be\label{est:lambda+-2}
\begin{aligned}
 C(k_h)^{-1}(\eps + \sqrt{ \eps\nu})^{1/2}\leq |\lambda^-(-1,k_h)| \leq C(k_h)(\eps + \sqrt{ \eps\nu})^{1/2},\\
C(k_h)^{-1}\leq |\lambda^+(-1,k_h)| \leq C(k_h)
\end{aligned}
\ee
and
$$
w_{\lambda^- }= (1,-i+O(\sqrt{\eps \nu})+ O(\eps)),\quad 
w_{\lambda^+} = (1,i+O(\eps))
$$
from which we deduce uniform bounds (with respect to $\eps$ and $\nu$ sufficiently small) on the transition matrix $P$ and its inverse.

\medskip
 $\bullet$ We then define  $  V^0(\mu,k_h)$ and   $  V^1(\mu,k_h)$ by
\begin{equation}
\label{uBLi-def}
V^j(\mu,k_h;x) \exp\left( i\mu \frac{t}{\eps} \right)= \alpha^j_- W^j_{\lambda^-}(t,x) +\alpha_+^j W^j_{\lambda^+}(t,x)
\end{equation}
where $W^j_\lambda$ is defined in terms of $w_\lambda$ by (\ref{vlambda-def}) and the coefficients $\alpha^j_-$ and $\alpha^j_+$ are defined by
\begin{equation}
\label{alpha-def}
(\alpha^j_-,\alpha^j_+)=P^{-1} \hat \delta^j_h(\mu,k_h) .
\end{equation}

 \bigskip
 \noindent
 {\bf  Case when $k_h= (0,0)$.}

That case is strongly different since there is no term of higher order in (\ref{linear-ansatz})~:
$$A_\lambda = \begin{pmatrix}
\ds i\mu -\lambda^2  & \ds-1\\
 \ds1&\ds i\mu -\lambda^2 
 \end{pmatrix}$$
  
\medskip
\noindent
\underline{For $|\mu|\neq 1$} we use exactly the same arguments as previously and define 
$\hat v^j (\mu,0)$ by formulas  (\ref{uBLi-def})(\ref{alpha-def}). 

\begin{Rmk}\label{resonant-rmk}
When $|\mu|=1$ we cannot find a basis of eigenvectors $(w_{\lambda^-}, w_{\lambda^+})$ with $\Re (\lambda^-)>0$ and $\Re(\lambda^+)>0$. One of the eigenvalue is necessarily 0, and thus the corresponding solution has no decay in $z$. In other words we do not expect the boundary terms to be localized in the vicinity of the boundary uniformly in time.
\end{Rmk}

The assumption that the boundary condition is non resonant ensures however that there is no such contribution.

\underline{If $|\mu| =1$}   we have 
$$ \lambda^{-\mu}=2\mu i \hbox{ and } \lambda^{\mu} =0$$
with
$$ w_{\lambda^-} =(1,-i) \hbox{ and } w_{\lambda^+} =(1,i)\,.$$
If we define as previously $W^j_{\lambda^{-\mu}}$  by (\ref{vlambda-def}), and 
$\alpha^j_\pm$ by (\ref{alpha-def}), we have
$$ \alpha^j_\mu =0.$$
Setting, for $j=0$ or $j=1$,
$$
V^j(\mu,0;x)\exp \left( i\mu \frac t\eps\right)=\alpha^j_{-\mu} W^j_{\lambda^{-\mu}}(t,x) 
$$
we can check that it is an exact solution to (\ref{NS1}), which further satisfies the required horizontal boundary condition.

\subsection{Resonant case}$ $ 
 
 Let us then focus on the resonant part of the motion.
 The singular component $u_{\eps, res}$ of the
velocity is a 2D vector field (depending only on $t$ and $z$),  so
that (\ref{NS1}) can be rewritten
$$\d_t  u_{\eps, res}+\frac1\eps u_{\eps, res}\wedge e_3 -\nu \d_{zz}u_{\eps, res}
=0,$$
meaning that the pressure is constant.

\medskip
$\bullet$ Therefore the equation  can be
filtered by a  simple change of unknown~:
$$v_\nu(t) ={1\over {2}}  \La \begin{pmatrix} 1 \\i \\ 0 \end
{pmatrix}\big| u_{\eps, res} \Ra  \begin{pmatrix} 1 \\i \\ 0 \end{pmatrix}
e^{-i\frac t\eps}+{1\over {2}}  \La \begin{pmatrix} 1 \\-i \\ 0 \end
{pmatrix}\big| u_{\eps, res} \Ra  \begin{pmatrix} 1 \\-i \\ 0 \end
{pmatrix} e^{i\frac t\eps}$$
A straightforward computation   leads then  to
\begin{equation}
\label{heat-eq}
\d_t  v_\nu -\nu \d_{zz} v_\nu =0,
\end{equation}
which is nothing else than the heat equation with small conductivity $
\nu$. We therefore expect  the boundary effects to  remain localized (in $L^2$ sense) in  layers of
size $O(\sqrt{\nu t})$ near the boundaries.

\medskip
$\bullet$ Let us then introduce a boundary layer approximation $$v_{L, res}=v^0_{L, res}+v^1_{L, res}$$ for $v_\nu$.
The heat equation on $v^j_{L, res}$  is supplemented with the boundary condition
$$ v^0_{L,res|z=0}=\delta^0_{L,res},\quad \d_z v^1_{L,res|z=1} =\delta^1_{L,res}
$$
and the initial condition
$$  v^j_{L,res|t=0}=0\,.$$
 Notice that once again we do not enforce boundary conditions on both sides for $v^j_{L,res}$~:  the trace of $v^j$ at $z=1-j$ will be imposed by the exponential profile condition.
We indeed seek  $v_{L,res}^j$  in the form of   self similar profiles
\begin{equation}
\label{vres-def}
 v^0_{L, res}= \varphi ^0\left({ z \over \sqrt{\nu t}}\right),\quad  \d_z v^1_{L, res}= \varphi ^1\left({ (1-z) \over \sqrt{\nu t}}\right) .
 \end{equation}
We then get
$$-\frac12 X\varphi'(X) -\varphi''(X)=0,$$
from which we deduce that
$$\varphi'(X) =\varphi'(0) \exp \left( - \frac14 X^2\right)\,,$$
and
$$\varphi(X) =- \int_X^{+\infty}\varphi'(0) \exp \left( - \frac14 Y^2
\right)dY\,.$$
We thus choose
$$(\varphi^j)'(0) =- \delta^j_{L,res} \left(\int_0^{+\infty} \exp \left( - \frac14 Y^2
\right)dY\right)^{-1}=-{1 \over \sqrt{\pi}}\delta^j_{L,res}.$$
Note that, in order that $v^1_{L,res}$ satisfies the heat equation (\ref{heat-eq}), we have to further impose that $v^1_{L,res} (-\infty)=0$.

\medskip
$\bullet$
We  deduce that
$$\begin{aligned}
 v^0_{L, res} &= {1 \over \sqrt{\pi}}\delta^0_{L,res} \int _{ z \over \sqrt{\nu t}}^{+\infty}e^{-
\frac{Y^2}4 }dY\\
&\sim _{z\neq 0} {2 \over \sqrt{\pi}}\delta^0_{L,res} \left( { z \over \sqrt
{\nu t}}\right)^{-1} \exp \left( - \frac14  \left( { z \over \sqrt
{\nu t}}\right)^2\right)
\end{aligned}$$
Similarly, we have
$$ v^1_{L, res} (t,z) =\int_{-\infty}^z \varphi^1\left({ 1-z' \over \sqrt{\nu t}}\right)dz',$$
with
$$\varphi^1\left({ 1-z \over \sqrt{\nu t}}\right)\sim_ {z\neq 1} {2 \over \sqrt{\pi}}\delta^1_{L,res} \left( { 1-z \over \sqrt
{\nu t}}\right)^{-1} \exp \left( - \frac14  \left( { 1-z \over \sqrt
{\nu t}}\right)^2\right).$$
Therefore
$$ v^1_{L, res} (t,z)\sim_ {z\neq 1} -{4(\nu t)^{3/2} \over \sqrt{\pi}}\delta^1_{L,res}  (1-z) ^{-2} \exp \left( - \frac14  \left( { 1-z \over \sqrt
{\nu t}}\right)^2\right).$$
In particular  $ v^j_{L, res}$ is exponentially small outside from a
layer of size $O(\sqrt{\nu t})$.

\subsection{Continuity estimates}$ $

 We now turn to the derivation of the estimates of Theorem \ref{BL-thm}.
 
Thanks to the previous paragraph, the resonant part of the boundary layer, namely $v^j_{res}$ defined by \eqref{vres-def}, satisfies the third estimate in (\ref{B-cont}).

We then split $v^j-v^j_{res}$ according to the size of the boundary layers
$$\begin{aligned}
\bar v^j = \sum_{k_h}\sum _{\mu \sigma\neq 1} \alpha_\sigma^j(\mu,k_h) W^j_{\lambda^\sigma(\mu,k_h)},\\
\tilde v^j = \sum_{k_h\neq 0} \sum _{\mu \sigma=1} \alpha_\sigma^j(\mu,k_h) W^j_{\lambda^\sigma(\mu,k_h)}
\end{aligned}
$$
By definition of $\alpha _\sigma^j(\mu,k_h)$ and $W^j_{\lambda^\sigma(\mu,k_h)}$, we then obtain the estimates 
$$\| \bar v^j_h \|_{L^2}+(\eps\nu)^{-1/2} \| \bar v^j_3 \|_{L^2}\leq C\beta^j(\eps \nu)^{\frac{1+2j}{4}} \| \delta^j_h\|$$
for the classical boundary layer, and 
$$
 \| \tilde v^j_h \|_{L^2(\omega)}+\frac{\sqrt{\eps} + (\eps\nu)^{1/4}}{(\eps\nu)^{1/2}}\| \tilde v_3 \|_{L^2(\omega)} \leq C\beta^j\left(\frac{\eps\nu}{\eps + \sqrt{\eps\nu}}\right)^{\frac{1+2j}{4}}  \|\delta^j_h\|
$$
for the quasi-resonant  boundary layer.


\section{Study of the wind-driven part of the motion}\label{wind}

This section is devoted to the proof of Theorem \ref{thm} in the case
where the initial data $\gamma$ vanishes. In other words, we study
here the asymptotic behaviour of the system \eqref{forcing}. Our goal
is to prove that under a technical scaling assumption which will be
precised later on, the solution $u$ of \eqref{forcing} converges
towards zero in $L^\infty_\text{loc}(\bR_+, L^2(\om))$ as $\eps,\nu\to
0$.

As explained in section \ref{results-par}, the method of proof relies
on the construction of an approximate solution $u_{app}$, defined as
the sum of boundary layer terms obtained thanks to Theorem
\ref{BL-thm}, and interior terms which will be determined by a
filtering process. The presence of these interior terms is due to the
fact that the vertical components of the boundary layer terms
constructed in Theorem \ref{BL-thm} do not vanish on $z=0$ and $z=1$.
More importantly, the traces of these boundary layer terms do not
satisfy the assumptions of the stopping Lemma \ref{stop} in Appendix
B, which quantifies the order of approximation required for $u_{app}$.
Hence in general, the approximate solution is constituted of several
correctors, which all vanish in $L^2$ norm.

The different modes of the wind stress $\sigma$ will be treated
independently of each other. Indeed, in the case where the stress
$\sigma $ does not have any quasi-resonant mode, it will be sufficient
to construct a very crude approximation, constituted merely of one
boundary layer term and one additional corrector. On the other hand,
the vertical components of the quasi-resonant boundary layer terms
have a much larger trace on $z=1$ and $z=0$ than the classical ones,
as can be seen in inequalities \eqref{B-cont}. Consequently, the
quasi-resonant part of the stress $\sigma$ will require a much more
refined approximation, with several orders of boundary layer terms and
interior terms.

The organization of this section is as follows: first, we give in
paragraph \ref{par:stab} a general convergence result for the system
\eqref{forcing}. Then, in paragraph \ref{par:approx1}, we construct
the first orders of the approximate solution $u_{app}$. In paragraph
\ref{par:forcing-nonres}, we conclude in the case when there is no
quasi-resonant mode $|\mu|=1$, $k_h\neq 0$. At last, we prove the
theorem for the quasi-resonant part of the stress $\sigma $ in
paragraph \ref{par:forcing-res}. At each step, we give some sufficient
assumptions on the  parameter $\beta$, and at the end of the proof, we
only keep the most restrictive ones, which will lead to the scaling
assumption \eqref{hyp:scaling}.

\subsection{Some stability inequality for the wind-driven system
(\ref{forcing})}

\label{par:stab}

As mentioned in Section 2, for the non-resonant part of wind-driven
system (\ref{forcing}), we will only need a rather crude approximation
of the solution. We have indeed the following

\begin{Prop}\label{energy1}
Denote by $u_\eps$ the solution to (\ref{forcing}) and by $u_{app}$ any
approximate solution in the sense that
\begin{equation}
\label{forcing-app}
\begin{aligned}
\d_t u_{app} +\frac1\eps \bP(e_3\wedge u_{app}) -\Delta_h u_{app}-\nu
\d_{zz} u_{app}=\eta,\\
\nabla \cdot u_{app} =0,\\
u_{app|t=0} =\eta_{ini},\\
u_{app,3|z=0}=0,\quad u_{app,h|z=0}=\eps \eta_0,\\
u_{app,3|z=1}=0,\quad \d_z u_{app,h|z=1}=\beta{\sigma}^\eps+\eta_1,
\end{aligned}
\end{equation}
with $\eta\to 0$ in $L^2([0,T]\times\omega)$, $\eta_{ini}\to 0$ in
$L^2(\omega)$ and $\eta_0,\nu^{3/4}\eta_1 ,\eps \d_t \eta_0 \to 0$ in
$L^2([0,T]\times\omega_h)$.
Then as $\eps,\nu\to 0$,
$$\begin{aligned}
   \| u_\eps-u_{app}\|_{L^\infty([0,T], L^2(\om))}\to 0,\\
\| \nabla_h(u_\eps-u_{app})\|_{L^2([0,T]\times\om)}+ \sqrt{\nu}\|
\d_z(u_\eps-u_{app})\|_{L^2([0,T]\times\om)} \to 0.
  \end{aligned}
$$
\end{Prop}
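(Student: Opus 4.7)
The proof proceeds by the standard energy method applied to the error $w := u_\eps - u_{app}$, which solves the Coriolis--viscous equation driven by the residual $-\eta$, with initial datum $-\eta_{ini}$ and inhomogeneous horizontal traces $w_h|_{z=0}=-\eps\eta_0$, $\d_z w_h|_{z=1}=-\eta_1$ (the vertical traces $w_3|_{z=0}=w_3|_{z=1}=0$ holding automatically from $u_\eps,u_{app}\in V_0$). A direct $L^2$ estimate on $w$ is blocked by the boundary contribution at $z=0$ coming from vertical viscosity, which involves the uncontrolled Neumann trace $\d_z w_h|_{z=0}$. My plan is therefore to construct a divergence-free lift $\Phi\in V_0$ absorbing the inhomogeneous traces and to do the energy estimate on $\tilde w := w-\Phi$, which has homogeneous boundary data.

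For the lift I take $\Phi_h(t,x) = -\eps\,\eta_0(t,x_h)\phi_0(z) - \eta_1(t,x_h)\phi_1(z)$, where $\phi_0,\phi_1$ are smooth profiles localized in layers of thickness $\sqrt{\nu}$ near $z=0$ and $z=1$, satisfying $\phi_0(0)=1$, $\phi_0'(1)=\phi_1(0)=0$, $\phi_1'(1)=1$ and both having vanishing $z$-mean; then $\Phi_3(t,x):=-\int_0^z \nabla_h\cdot\Phi_h\,dz'$ vanishes at $z=0$ and, thanks to the zero-mean condition, at $z=1$, so $\Phi\in V_0$. The thin-layer scaling produces
\[\|\Phi\|_{L^2(\om)} \leq C\bigl(\eps\nu^{1/4}\|\eta_0\|_{L^2(\om_h)} + \nu^{3/4}\|\eta_1\|_{L^2(\om_h)}\bigr),\]
together with analogous bounds for $\d_t\Phi$, $\nabla_h\Phi$, $\sqrt{\nu}\,\d_z\Phi$ and $\nu\,\d_{zz}\Phi$, each vertical derivative costing a factor $\nu^{-1/2}$.

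The function $\tilde w\in V_0$ then solves, up to a pressure gradient absorbed by $\bP$,
\[\d_t\tilde w + \tfrac{1}{\eps}\bP(e_3\wedge\tilde w) - \Delta_h\tilde w - \nu\d_{zz}\tilde w = -\eta - \d_t\Phi - \tfrac{1}{\eps}\bP(e_3\wedge\Phi) + \Delta_h\Phi + \nu\d_{zz}\Phi,\]
with homogeneous horizontal traces and initial datum $-\eta_{ini}-\Phi|_{t=0}$. Pairing with $\tilde w$, the Coriolis term vanishes by antisymmetry of $e_3\wedge\cdot$ restricted to $V_0$, and integration by parts in $z$ produces the natural dissipation $\|\nabla_h\tilde w\|^2+\nu\|\d_z\tilde w\|^2$ with no boundary contribution. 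I transfer one spatial derivative from $\Delta_h\Phi$ and from $\nu\d_{zz}\Phi$ back onto $\tilde w$, absorb half of the resulting dissipative cross-terms, and estimate the remaining $\Phi$-contributions and $\int\eta\cdot\tilde w$ by Cauchy--Schwarz. Gronwall's lemma then yields $\|\tilde w\|_{L^\infty([0,T],L^2)}+\|\nabla_h\tilde w\|_{L^2([0,T]\times\om)}+\sqrt{\nu}\|\d_z\tilde w\|_{L^2([0,T]\times\om)}\to 0$, and the triangle inequality together with the smallness of $\Phi$ delivers the conclusion for $w$.

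The delicate points are (i) the Coriolis-type source $\tfrac{1}{\eps}\bP(e_3\wedge\Phi)$, whose $L^2$ norm is of order $\nu^{1/4}\|\eta_0\|+\nu^{3/4}\|\eta_1\|/\eps$ and therefore forces $\eta_0$ and $\nu^{3/4}\eta_1/\eps$ to be small -- this is precisely the role of the hypotheses $\eta_0\to 0$ and $\nu^{3/4}\eta_1\to 0$ in $L^2$, modulated by the forthcoming scaling assumption on $\eps,\nu,\beta$; and (ii) the time-derivative source $\d_t\Phi$, whose $\eps\d_t\eta_0$-piece is handled directly by the hypothesis, while the $\d_t\eta_1$-piece must be treated by integration by parts in $t$ against $\tilde w$ (using the PDE to replace $\d_t\tilde w$, at the cost of an extra $1/\eps$ through the Coriolis term, again absorbed by the scaling). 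I expect (i) to be the main obstacle, because it dictates the precise compatibility between $\eps$, $\nu$ and the size of the surface stress, and is the reason the proposition is stated as a conditional stability estimate rather than an unconditional convergence result.
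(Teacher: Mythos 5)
Your proposal diverges from the paper's proof at the decisive step, and the divergence is exactly where the gap lies. The paper does \emph{not} lift the Neumann residual $\eta_1$. It lifts only the Dirichlet trace at $z=0$, via Lemma~\ref{stop} (a polynomial-in-$z$ lift, not a thin layer), leaves $\d_z v_{app,h|z=1}=\beta\sigma^\eps+\eta_1$ unchanged, and then handles the resulting boundary term $\nu\int_0^t\int_{\omega_h}(u_\eps-v_{app})_{h|z=1}\cdot\eta_1$ in the energy balance by the trace inequality
\[
\nu^{1/2}\| (u_\eps-v_{app})_{h|z=1} \|^2_{L^2(\omega_h)}\leq C\, \| u_\eps-v_{app}\|^2_{L^2(\omega)} +\nu \, \| \d_z (u_\eps-v_{app})\|^2_{L^2(\omega)}.
\]
After Young's inequality the $\eta_1$-contribution costs only $\nu^{3/2}\|\eta_1\|^2_{L^2(\omega_h)}$ per unit time plus terms absorbed into the dissipation, which is exactly why the hypothesis is $\nu^{3/4}\eta_1\to 0$ in $L^2([0,T]\times\omega_h)$ and nothing stronger.

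Your decision to lift $\eta_1$ with a thin-layer profile $\phi_1$ of thickness $\sqrt{\nu}$ is where the argument fails. You correctly compute that the resulting Coriolis source $\frac1\eps\bP(e_3\wedge\Phi)$ has a piece of order $\nu^{3/4}\|\eta_1\|_{L^2(\omega_h)}/\eps$. This is not controlled by $\nu^{3/4}\eta_1\to 0$, and no choice of layer thickness $h$ repairs it: a profile with $\phi_1'(1)=1$, $\phi_1(0)=0$, zero mean, supported in a layer of thickness $h$ has $\|\phi_1\|_{L^2(0,1)}\sim h^{3/2}$, so you would need $h^{3/2}\|\eta_1\|/\eps\to 0$ while simultaneously keeping $\nu\|\phi_1''\|_{L^2}\sim\nu h^{-1/2}$ small and keeping $h^{3/2}\|\d_t\eta_1\|$ small --- but $\d_t\eta_1$ appears nowhere in the hypotheses. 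Your own point (ii) concedes this and proposes integrating by parts in time against $\tilde w$; this reintroduces $\frac1\eps\bP(e_3\wedge\tilde w)$ with no control, and also produces boundary-in-time terms depending on $\eta_1(0),\eta_1(T)$, again outside the hypotheses. Your point (i) misdiagnoses the obstruction as something to be ``absorbed by the forthcoming scaling assumption'': the proposition is invoked later with exactly the stated hypotheses and no extra $1/\eps$ room; the trace-inequality route eliminates the $1/\eps$ penalty on $\eta_1$ entirely rather than deferring it.

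(The Dirichlet part of your lift is, incidentally, sound, and the thin-layer $\phi_0$ is even slightly sharper than the polynomial lift of Lemma~\ref{stop}: it yields a Coriolis source of order $\nu^{1/4}\|\eta_0\|_{L^2(\omega_h)}$ rather than $\|\eta_0\|_{H^1(\omega_h)}$. But this does not rescue the Neumann part.)
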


\begin{proof}

$\bullet$ The first step consists in building a family $w$ such that
$v_{app} \eqdefa u_{app}+w$ satisfies
\begin{equation}
\label{forcing-app*}
\begin{aligned}
\d_t v_{app} +\frac1\eps \bP(e_3\wedge v_{app}) -\Delta_h v_{app}-\nu
\d_{zz} v_{app}=\zeta,\\
\nabla \cdot v_{app} =0,\\
v_{app|t=0} =\zeta_{ini},\\
v_{app,3|z=0}=0,\quad v_{app,h|z=1}= 0,\\
v_{app,3|z=1}=0,\quad \d_z v_{app,h|z=1}=\beta{\sigma}^\eps+\eta_1.
\end{aligned}
\end{equation}
with  $\zeta_{ini}\to 0$ in $L^2(\omega)$  and $\zeta \to 0$ in
$L^2([0,T]\times\omega_h)$.

In order to do so, we just apply Lemma \ref{stop} in the Appendix with
$$\delta^0_h=-\eps\eta_0,\quad \delta^0_3=0 \hbox{ and } \delta^1=0.$$
A simple computation allows then to establish all the properties
(\ref{forcing-app*}).

$\bullet$ The convergence is then obtained by a standard energy
estimate. Combining (\ref{forcing-app*}) and (\ref{forcing}), and
integrating by parts lead indeed to
$$\begin{aligned}
\frac12 \| (u_\eps-v_{app})(t)\|_{L^2}^2 +\int_0^t \|\nabla_h
(u_\eps-v_{app})\|_{L^2}^2 ds+\nu \int_0^t \|\nabla_h (u_\eps-v_{app})\|_{L^2}^2
ds\\
\leq \frac12 \|\zeta_{ini} \|_{L^2(\omega)}^2+ \int_0^t  \|
u_\eps-v_{app}\|_{L^2(\omega)} \| \zeta\|_{L^2(\omega)} ds \\+\nu \int_0^t
\| (u_\eps-v_{app})_{h|z=1} (t)\|_{L^2(\omega_h)}\|
\eta_1\|_{L^2(\omega_h)} ds
\end{aligned}
$$

$\bullet$ To conclude we therefore need to estimate the trace
$(u_\eps-v_{app})_{h|z=1}$ in $L^2(\omega_h)$ in terms of the $H^1$ norm of
$u_\eps-v_{app}$. By Sobolev embeddings and the Cauchy-Schwarz inequality,
we have
$$
\nu^{1/2} \| (u_\eps-v_{app})_{h|z=1} \|^2_{L^2(\omega_h)}\leq C \|
u_\eps-v_{app}\|^2_{L^2(\omega)} +\nu \| \d_z
(u_\eps-v_{app})\|^2_{L^2(\omega)}$$
Plugging that estimate in the previous energy inequality, we get
$$\begin{aligned}
\frac12 \| (u_\eps-v_{app})(t)\|_{L^2}^2 &+\int_0^t \|\nabla_h
(u_\eps-v_{app})\|_{L^2}^2 ds+\frac\nu2 \int_0^t \|\d_z
(u_\eps-v_{app})\|_{L^2}^2 ds\\
\leq &\frac12 \|\zeta_{ini} \|_{L^2(\omega)}^2+\frac12 \int_0^t
\|\zeta\|^2_{L^2(\omega)} ds +\nu^{3/2} \int_0^t \|
\eta_1\|^2_{L^2(\omega_h)} ds\\
&+ \frac12 \int_0^t \| (u_\eps-v_{app})(s)\|_{L^2(\omega)}^2ds
\end{aligned}
$$
using again the Cauchy-Schwarz inequality. We conclude by Gronwall's lemma
$$\begin{aligned}
\frac12 \| (u_\eps-v_{app})(t)\|_{L^2}^2 +\int_0^t \|\nabla_h
(u_\eps-v_{app})\|_{L^2}^2 ds+\nu \int_0^t \|\d_z (u_\eps-v_{app})\|_{L^2}^2
ds\\\leq \frac{e^{2Ct}}2 \|\zeta_{ini} \|_{L^2(\omega)}^2+\frac12
\int_0^t  \|\zeta\|^2_{L^2(\omega)} e^{2C(t-s)}ds +\nu^{3/2} \int_0^t
\|  \eta_1\|^2_{L^2(\omega_h)} e^{2C(t-s)}ds\\
\end{aligned}
$$
which proves that $u_\eps-v_{app}$ converges to 0 in
$L^\infty_{\text{loc}}(\bR_+, L^2(\om))$. Theorem \ref{thm} will be
proved in the case when $\gamma=0$ if we are able to build some
approximate solution $u_{app}$ that converges strongly to 0 as
$\eps,\nu\to 0$.
\end{proof}

\begin{Rmk}
The above proposition can be slightly modified if one wishes to work
with a source term $\eta$ belonging to $L^2([0,T], H^{-1}(\om))$, for
instance. In this case, following exactly the same argument as in the
proof above, the relevant assumption on $\eta$ is
\be
\label{hyp:etaH1}
\frac{1}{\sqrt{\nu}} \| \eta\|_{L^2([0,T], H^{-1}(\om))}=o(1)\quad
\text{as }\eps,\nu\to 0.
\ee

\label{rmk:energy1}
\end{Rmk}

\subsection{The first order terms of the approximate solution}

\label{par:approx1}

In order to obtain some approximate solution to (\ref{forcing}) (in
the sense  (\ref{forcing-app}) of the previous paragraph), we will
essentially need to construct the boundary layer term and some small
corrector to account for the vertical component of the boundary
condition.

\bigskip
\noindent
$\bullet$  We define with the notations of Proposition \ref{BL-thm}
$$u^{BL,1} =\cB (0,\beta\sigma)=\bar u^{BL,1}+\tilde
u^{BL,1}+u^{BL,1}_{res}\,.$$
Since we assume that $\sigma$ has a finite number of horizontal
Fourier modes $k_h$  and of oscillating modes $\mu$,
by Lemma \ref{BL-thm}, we have
$$
\begin{aligned}
\| \bar u^{BL,1}_h \|_{L^2(\omega)} \leq C\|
\sigma\|_{L^2(\omega_h)}\beta(\eps \nu)^{3/4},\\
\|\bar u^{BL,1}_3 \|_{L^2(\omega)}\leq C\| \sigma\|_{L^2(\omega_h)}
\beta(\eps \nu)^{5/4},\\
\end{aligned}
$$
and for the quasi-resonant modes with $|\mu|=1$, $k_h\neq 0$,
$$
\begin{aligned}
\| \tilde u^{BL,1}_h \|_{L^2(\omega)} \leq C\|
\sigma\|_{L^2(\omega_h)}\beta\left( \frac{\eps\nu}{\eps + \sqrt{\eps
\nu}} \right)^{3/4}\leq C \beta \nu^{3/4},\\
\|\tilde u^{BL,1}_3 \|_{L^2(\omega)}\leq C\|
\sigma\|_{L^2(\omega_h)}\beta\left( \frac{\eps\nu}{\eps + \sqrt{\eps
\nu}} \right)^{5/4}\leq C \beta \nu^{5/4}.\\
\end{aligned}
$$
As for the resonant modes $|\mu|=1$, $k_h=0$, we get
$$
\begin{aligned}
\| u^{BL,1}_{res, h} \|_{L^2([0,T]\times \omega)} \leq C \beta T^{5/4}
\| \sigma\|_{L^\infty([0,T], L^2(\omega_h))}\nu^{3/4},\\
u^{BL,1}_{res, 3}\equiv 0.
\end{aligned}
$$
Hence $u^{BL,1}$ vanishes provided
\be\label{cond_beta:1}
\beta \nu^{3/4}=o(1)\quad\text{as }\eps,\nu\to 0.
\ee

Furthermore, using the explicit formula for $\cB$, we get
$$
\begin{aligned}
\|\bar u^{BL,1}_{3|z=1}\|_{H^s(\omega_h)}=O(\beta(\eps \nu)) \hbox{
and } \|\d_t \bar u^{BL,1}_{3|z=1}\|_{H^s(\omega_h)}= O(\beta\nu)\\
\|\bar u^{BL,1}_{3|z=0}\|_{H^s(\omega_h) } =O(\beta(\eps \nu)^N)
\hbox{ and } \|\d_t \bar
u^{BL,1}_{3|z=0}\|_{H^s(\omega_h)}=O(\beta(\eps \nu)^N) ,\\
\|\tilde u^{BL,1}_{3|z=1}\|_{H^s(\omega_h)}=O(\beta(\eps \nu)^{1/2})
\hbox{ and } \|\d_t \bar u^{BL,1}_{3|z=1}\|_{H^s(\omega_h)}=
O\left(\beta\sqrt{\frac{\nu}{\eps}}\right)\\
\|\tilde u^{BL,1}_{3|z=0}\|_{H^s(\omega_h) } =O(\beta(\eps \nu)^N)
\hbox{ and } \|\d_t \bar
u^{BL,1}_{3|z=0}\|_{H^s(\omega_h)}=O(\beta(\eps \nu)^N)
\end{aligned}
$$
for any integer $N$, and uniformly in time. As a consequence, $\bar
u^{BL,1}_{3|z=1}$, $\bar u^{BL,1}_{3|z=0}$ and $\tilde
u^{BL,1}_{3|z=0}$ satisfy the conditions of the stopping Lemma
\ref{stop} in the Appendix as soon as $\beta \nu=o(1)$, which is
always ensured by hypothesis \eqref{cond_beta:1}. We  denote by $w$
the function defined in Lemma \ref{stop} with
$$
\begin{aligned}
\delta^0_h=0,\quad& \delta^1_h=0\\
\delta^0_3=-\bar u^{BL,1}_{3|z=0}-\tilde
u^{BL,1}_{3|z=0},\quad&\delta^1_3=-\bar u^{BL,1}_{3|z=1}.
\end{aligned}
$$

\noindent
$\bullet$  The term $\tilde u^{BL,1}_{3|z=1}$, on the other hand, does
not match the conditions of Lemma \ref{stop}. We therefore introduce
some  corrector $v^{int,1}$ to restore the zero-flux  condition. We
first define its vertical component
$$v^{int,1}_3=  -\tilde u^{BL,1}_{3|z=1}z\,,$$
then its horizontal component in order that the divergence-free
condition is satisfied
$$v^{int,1}_h =\nabla_h (\Delta_h)^{-1} \tilde u^{BL,1}_{3|z=1}.$$
Note that for $k_h=0$, $v^{int,1}$ is identically zero. In any case,
we get easily that
$$\begin{aligned}
   \|  v^{int,1}\|_{L^\infty([0,\infty),H^s(\omega))}
=O\left(\beta\frac{\eps\nu}{\eps + \sqrt{\eps\nu}}\right) = O(\beta
\nu),\\
\| \d_t  v^{int,1}\|_{L^\infty([0,\infty),L^2(\omega))}=O\left(\beta\sqrt{\frac{\nu}{\eps}}\right).
  \end{aligned}
$$
With the above notations, the first order of the approximate solution
is given by $$u_{app}^1=u^{BL,1} + w + v^{int,1}.$$

\subsection{Proof of Theorem \ref{thm} when there is no quasi-resonant mode}
\label{par:forcing-nonres}
If there is no quasi-resonant mode (see the precise definition in the
previous section), namely if $\tilde u^{BL,1}=0$, we then claim that
$u_{app}^1$ satisfies the required conditions.
We indeed have clearly
$$u^1_{app, 3|z=0} =u^1_{app,3|z=1} =0$$
by definition of $w$. Notice that in this case $v^{int,1}=0$. We further have
$$
\begin{aligned}
\d_z u^1_{app, h|z=1}- \beta\sigma^\eps=0,\\
\|u^1_{app,h|z=0}\|_{L^2(\omega_h)}=O(\beta(\eps \nu)^{N})\hbox{ and }
\|\d_t u^1_{app,h|z=0}\|_{L^2(\omega_h)}=O(\beta(\eps \nu)^{N})
\end{aligned}
$$
for all $N$.
We also have for  all $t\geq 0$
$$
\| u^1_{app} (t)\|_{L^2(\omega)} \leq \| u^{BL,1}(t)\|_{L^2(\omega)} +
 \| w(t)\|_{L^2(\omega)} =O(\beta(\eps \nu)^{3/4})=o(1).
$$

It remains then to check that the evolution equation is approximately satisfied.
We have
$$\d_t u_{app}^1 +\frac1\eps \bP(e_3\wedge u_{app}^1) -\Delta_h
u_{app}^1-\nu \d_{zz} u_{app}^1=O( \nu\beta)_{L^2([0,T]\times
\omega)}=o(1)$$
supplemented with some initial condition
$$u^1_{app|t=0} =O(\beta(\eps \nu)^{3/4})_{L^2(\omega)}.$$

We therefore apply Proposition \ref{energy1} and conclude that
$u^1_{app}$ has the same asymptotic behaviour as the solution of
\begin{equation}
\label{forcing*}
\begin{aligned}
\d_t u +\frac1\eps \bP(e_3\wedge u) -\Delta_h u-\nu \d_{zz} u=0,\\
\nabla \cdot u =0,\\
u_{|t=0} =0,\\
u_{3|z=0}=0,\quad u_{h|z=0}=0,\\
u_{3|z=1}=0,\quad \d_z u_{h|z=1}=\beta \sigma^\eps.
\end{aligned}
\end{equation}
Since $u^1_{app}$ vanishes in $L^\infty_{loc}(\bR_+, L^2(\om)),$
Theorem \ref{thm} is proved when $\gamma=0$ and when there is no
quasi-resonant mode in the forcing $\sigma.$

\subsection{Proof of the Theorem in the quasi-resonant case}

\label{par:forcing-res}

For the quasi-resonant modes $|\mu|=1$, the influence of the forcing
is much more extended inside the domain. In particular, the defect
$$
\begin{aligned}
\Sigma =&\d_t  v^{int,1} +\frac1\eps e_3\wedge  v^{int,1} -\Delta_h  v^{int,1}\\
=&\sum_{\mu=\pm 1} \sum _{k_h}\left( i\frac \mu\eps +|k_h|^2\right)
\hat v^{int,1} (\mu,k_h,z)e^{ik_h\cdot x_h}  e^{i\mu\frac{t}{\eps}}\\
&+\frac1\eps \sum_{\mu=\pm 1} \sum _{k_h}\begin{pmatrix}- \hat
v^{int,1}_2 (\mu,k_h,z)\\\hat v^{int,1}_1 (\mu,k_h,z)\\0 \end{pmatrix}
e^{ik_h\cdot x_h}  e^{i\mu\frac{t}{\eps}}
\end{aligned}
$$
does not converge strongly to 0 in $L^2$ norm. It is however expected
to have rapid oscillations, and thus to converge weakly to 0. The
standard method to deal with such a problem consists then in building
some corrector which will be small in $L^2$ norm in contrast with
its  time derivative which has to compensate the previous defect.

More precisely we will use the small divisor estimate stated in
Appendix~B. For $K>0$ arbitrary, denote by $\delta u^{int,1}_K=\sum_l
\hat w_l e^{-i\frac t\eps \lambda_l}N_l $ the solution to
$$ \d_t \delta u^{int,1}_K +\frac1\eps \bP(e_3\wedge \delta
u^{int,1}_K) -\Delta_h\delta u^{int,1}_K -\nu \d_{zz} \delta
u^{int,1}_K=-\bP_K (\Sigma),$$
supplemented with the initial condition
$$\delta u^{int,1}_{K|t=0}=0.$$
The notation $\bP_K$ stands for the projection onto the vector space
generated by $\{ N_l,|l|\leq K\}$. The idea is the to choose carefully
the truncation parameter $K$, depending on $\eps $ and $\nu$, so that
both $\delta u^{int,1}_K$ and the error term $\bP (\Sigma)-
\bP_K(\Sigma)$ are small in suitable Sobolev norms as $\eps$ and $\nu$
vanish.

\bigskip

\noindent $\bullet$ Let us first derive the equation on $\hat w_l$.
For $|l|\leq K$, $\hat w_l$ is the solution of
$$\d_t\hat  w_l +|l_h|^2\hat  w_l +\nu'|l_3|^2\hat w_l=-e^{i\lambda_l
\frac{t}{\eps}}\la N_l| \Sigma\ra $$ where $\nu'=\pi^2\nu$.
Direct computations give for $l_h \neq 0$, $\mu=\pm 1$,
$$
\begin{aligned}
 \hat v^{int,1}_h (\mu,l_h,z)=i\hat\delta_3(\mu,l_h)\frac{l_h}{|l_h|^2},\\
\hat v^{int,1}_3 (\mu,l_h,z)=\hat\delta_3(\mu,l_h)z,
\end{aligned}
$$
where
$$
\hat\delta_3(\mu,l_h)=i\beta(\eps\nu)\frac{\alpha^1_\mu(\mu,l_h)l_h\cdot
w_{\lambda^\mu}}{(\lambda^\mu)^2},
$$
where $\alpha^1_\mu$ and $w_\lambda$ were defined in the previous
section by \eqref{alpha-def} and \eqref{def:wlambda} respectively.
Notice moreover that $\lambda^\mu$ satisfies the estimates
\eqref{est:lambda+-1}-\eqref{est:lambda+-2}, so that in general,
$$
(\lambda^\mu)(\mu,k_h)^{-2}= O((\eps\nu)^{-1/2}).
$$

Moreover,
\begin{multline}
\label{scalarproduct}
\left\langle N_l\left| \begin{pmatrix} il_1\\il_2\\|l_h| ^2
z\end{pmatrix}e^{i l_h\cdot x_h}\right.\right\rangle= i{|l_h|^3\over
2\pi ^2|l|  l_3  } (-1)^{l_3}     \mathbf1_{l_3\neq 0}\\
\left\la N_l\left|\begin{pmatrix} - il_2\\il_1\\0\end{pmatrix}e^{i
l_h\cdot x_h}\right.\right\ra =\left\{\begin{array}{ll} 0&\text{ if
}l_3\neq 0,\\
\ds-\frac{|l_h|}{2\pi}&\text{ else.}\end{array}\right.
\end{multline}
We thus have
\begin{equation}\label{eq_wl}
\begin{aligned}
\d_t \hat w_l + (|l_h|^2+\nu' |l_3|^2)\hat w_l\\
= \frac{1}{\eps}\sum_{\mu=\pm 1}\frac{\hat
\delta_{3}(\mu,l_h)}{2\pi}\left( \mathbf1_{l_3\neq 0}\frac{(\mu-i\eps
|l_h|^2)|l_h|}{\pi |l| l_3} + \frac{\mathbf 1_{l_3=0}}{|l_h|}
\right)e^{i(\lambda_l+\mu)\frac{t}{\eps}}.
\end{aligned}
\end{equation}

\smallskip

\noindent $\bullet$ We now estimate the different terms and explain
how to choose the truncation parameter $K$. Notice first that by
truncating the large frequencies in $l$, we have introduced a source
term in the equation. Precisely, $\delta u^{int,1}_K + v^{int,1}$ is a
solution of equation \eqref{NS1} with a source term equal to
$$
(\Sigma-\bP\Sigma)+(\bP\Sigma -\bP_K\Sigma).
$$
The term $\Sigma-\bP\Sigma$ belongs to $V_0^{\bot}$ by definition of
$\bP$, and thus for all $u\in V_0$, we have
$$
\int_{\om}(\Sigma-\bP\Sigma)\cdot u=0.
$$
As for the remainder term $\bP\Sigma -\bP_K\Sigma$, we have
\be\label{est:resteK}\begin{aligned}
  \|\bP\Sigma -\bP_K\Sigma \|_{L^{\infty}((0,\infty),L^2(\om))}\leq C
\beta\sqrt{\frac{\nu}{\eps}}{K^{-3/2}},\\
  \|\bP\Sigma -\bP_K\Sigma \|_{L^{\infty}((0,\infty),H^{-1}(\om))}\leq
C  \beta\sqrt{\frac{\nu}{\eps}}{K^{-5/2}} .
                     \end{aligned}
\ee
With a view to apply Proposition \ref{energy1}, or its variant
sketched in Remark \ref{rmk:energy1}, we need the source term $\bP
\Sigma- \bP_K\sigma$ to be either $o(1)$ in $L^2$ norm or
$o(\sqrt{\nu})$ in $H^{-1}$ norm as $\eps,\nu\to 0$ (see condition
\eqref{hyp:etaH1}). Precisely, according to Proposition \ref{energy1}
and Remark \ref{rmk:energy1}, the parameter $K$ should satisfy either
\be
\beta \sqrt{\frac{\nu}{\eps}} K^{-3/2}=o(1)\quad\text{as }\eps,\nu\to
0\, ,\label{condK1}
\ee
or
\be
\frac{1}{\sqrt{\nu}}\beta \sqrt{\frac{\nu}{\eps}}
K^{-5/2}=\frac{\beta}{\sqrt{\eps}} K^{-5/2}=o(1)\quad\text{as
}\eps,\nu\to 0\, .\label{condK2}
\ee

On the other hand, we  apply Lemma \ref{smalldivisor} to get
$$
\|\delta u^{int,1}_K\|_{H^s(\omega)}\leq C
\beta(\eps\nu)^{\frac{1}{2}}K^{s+\frac{1}{2}}.$$
For further purposes, we have to choose $K$ such that the $H^s$ norm
of $\delta u^{int,1}_K$ satisfies
$$
\sqrt{\frac{\nu}{\eps}}\|\delta
u^{int,1}_K\|_{H^s(\omega)}=o(1)\quad\text{as }\eps,\nu\to 0\, ,
$$
and such that at least either \eqref{condK1} or \eqref{condK2} is
satisfied. We distinguish between the cases when $\nu$ is large (say
$\nu\geq \eps$) and $\nu$ is small (say $\nu\leq \eps $), which yield
different values for $K$.

\noindent- \label{page:hypbeta} If $\nu\leq \eps$, we choose $K$ so that $$
\beta\sqrt{\frac{\nu}{\eps}}{K^{-3/2}}=\beta\sqrt{\frac{\nu}{\eps}}(\eps\nu)^{1/2}K^{s+\frac{1}{2}}
$$
for some $s>3/2$, which yields
$$
K=(\eps\nu)^{-\frac{1}{2(s+2)}}.
$$
With this choice, we have
$$
\|\bP\Sigma -\bP_K\Sigma \|_{L^2},\  \sqrt{\frac{\nu}{\eps}}\|\delta
u^{int,1}_K\|_{H^s(\omega)}\leq C \beta \nu^{1-
\frac{s+\frac{1}{2}}{2(s+2)}}\eps^{-\frac{s+\frac{1}{2}}{2(s+2)}};
$$
Now, assume that $\beta$ satisfies the following assumption
\be\label{cond_beta:2}
\begin{aligned}
 \exists (\alpha_0,\alpha_1)\in(0,\infty)^2,\ \alpha_0 <5/7\text{ and
}\alpha_1>2/7, \ \exists C>0,\\
\nu\leq \eps \Rightarrow \beta \leq C \nu^{-\alpha_0} \eps^{\alpha_1}.
\end{aligned}
\ee
We  choose $s_0>3/2$ such that
$$\begin{aligned}
   1- \frac{s_0+\frac{1}{2}}{2(s_0+2)}-\alpha_0>0,\\
\alpha_1-\frac{s_0+\frac{1}{2}}{2(s_0+2)}>0,
  \end{aligned}
$$
and we have, as $\eps,\nu\to 0$,\be\label{est:wK1}
\|\bP\Sigma -\bP_K\Sigma \|_{L^2}+\ \sqrt{\frac{\nu}{\eps}} \|\delta
u^{int,1}_K\|_{H^{s_0}(\omega)}=o(1).
\ee

\noindent- Else, we choose $K$ so that
$$
\beta\frac{1}{\sqrt{\eps}}{K^{-5/2}}=\beta\nu K^{s+\frac{1}{2}}
$$
for some $s>3/2$, which yields
$$
K=(\nu\sqrt{\eps})^{-\frac{1}{s+3}}.
$$
Assume now that $\beta$ satisfies the following assumption
\be\label{cond_beta:3}
\begin{aligned}
 \exists (\alpha_0,\alpha_1)\in(0,\infty)^2,\ \alpha_0 <5/9\text{ and
}\alpha_1>2/9, \ \exists C>0,\\
\nu\geq \eps \Rightarrow \beta \leq C \nu^{-\alpha_0} \eps^{\alpha_1}.
\end{aligned}
\ee
We then choose $s_0>3/2$ such that
$$
\begin{aligned}
\alpha_1 - \frac{s_0+ \frac{1}{2}}{2(s_0+3)}>0,\\
1-\alpha_0 - \frac{s_0+ \frac{1}{2}}{s_0+3}>0,
\end{aligned}
$$
and we have, as $\eps,\nu\to 0$,\be\label{est:wK2}
\frac{1}{\sqrt{\nu}}\|\bP\Sigma -\bP_K\Sigma
\|_{L^{\infty}((0,\infty),H^{-1}(\om))}+
\sqrt{\frac{\nu}{\eps}}\|\delta
u^{int,1}_K\|_{L^{\infty}((0,\infty),H^{s_0}(\omega))}=o(1).
\ee

We emphasize that this method remains valid when $\nu=O(\eps)$;
however, if $\nu=\eps$,  condition \eqref{cond_beta:3} is more
restrictive than \eqref{cond_beta:2}.

\medskip
$\bullet$ Because of the terms $v^{int,1}$ and $\delta u^{int,1}_K$,
the horizontal boundary conditions are no longer satisfied at $z=0$
(notice however that they are satisfied at $z=1$). Thus, we construct
another boundary layer term, which we denote by $\delta u^{BL,1}$,
such that
$$
\delta u^{BL,1}=\mathcal B(-v^{int,1}_{h|z=0} -\delta u^{int,1}_{K,h|z=0},0 ).
$$
The above definition is not entirely licit, since $\delta
u^{int,1}_{K,h|z=0}$ takes the form
$$
\delta u^{int,1}_{K,h|z=0}(t,x_h)=\sum_{|l|\leq K}\hat w_l(t)
e^{-i\lambda_l\frac{t}{\eps}}e^{ik_h\cdot x_h}\begin{pmatrix}

                                      n_1(k)\\n_2(k)

                                     \end{pmatrix},
$$
where the vector $n(k)$ is defined in Appendix A (see
\eqref{base1}-\eqref{base2}. Hence $\delta u^{int,1}_{K,h|z=0}$
depends on the fast time variable $t/\eps$, but also on the slow time
variable $t $ through the coefficient $\hat w_l$. In the definition of
$\delta u^{BL,1}$, we forget the time dependance of $\hat w_l$, and
consider the coefficients $\hat w_l$ as constants. Consequently, the
boundary layer term $\delta u^{BL,1} $ is not an exact solution of
equation \eqref{NS1}, but there is an error term depending on $\d_t
\hat w_l$. This error term will be estimated later on.

We now decompose $\delta u^{BL,1}$ into $\delta u^{BL,1}=\delta \tilde
u^{BL,1} + \delta \bar u^{BL,1}$ as in Lemma \ref{BL-thm}; the term
$\delta \tilde u^{BL,1}$ is due to the modes $k_h\neq 0$, $|\mu|=1$,
and thus depends only on $v^{int,1}$, since $|\lambda_k|< 1$ if
$k_h\neq 0$. Notice that there is no term $\delta u^{BL,1}_{res}$
because $\hat v^{int,1}(\mu,l_h,z)=0$ for $k_h=0$, $\hat w_l=0$ for
$k_h=0$.

According to the estimates \eqref{B-cont}, and provided
\eqref{cond_beta:3} holds, we have, for all $t>0$,
\begin{eqnarray*}
\| \delta u^{BL,1}_h(t)\|_{L^2(\om)}&\leq & C \|
v^{int,1}(t)\|_{L^2(\om)}\left(\eps\nu\right)^{\frac{1}{8}}+C \|
\delta u^{int,1}_K(t)\|_{H^{s_0}} (\eps\nu)^{1/4} \\
&\leq &C (\eps\nu)^{1/8},\\
\| \delta u^{BL,1}_3(t)\|_{L^2(\om)}&\leq &\|
v^{int,1}(t)\|_{L^2(\om)}\left(\eps\nu\right)^{\frac{3}{8}}+ C \|
\delta u^{int,1}_K(t)\|_{H^{s_0+1}} (\eps\nu)^{3/4}\\
&\leq & C (\eps\nu)^{3/8} + C (\eps\nu)^{3/4}
({\nu}\sqrt\eps)^{-\frac{1}{s_0+3}}.
\end{eqnarray*}
Thus $\delta u^{BL,1}$ vanishes in $L^\infty([0,T],L^2(\om) )$.

Let us now estimate the error term in equation \eqref{NS1} due to the
time dependance of $\hat w_l$.
According to \eqref{eq_wl}, there exists a constant $C$ such that
$$
|\d_t w_l|\leq \frac{C}{|l_3|^2}\sqrt{\frac{\nu}{\eps}}\beta,
$$
so that $\delta u^{BL,1} $ is an approximate solution of equation
\eqref{NS1}, with an error term which is bounded from above in
$L^2([0,T]\times \omega)$ by
$$
\sqrt{\frac{\nu}{\eps}}\beta(\eps\nu)^{1/4}= \nu^{3/4}\eps^{-1/4}\beta.
$$
Hence, the new condition on $\beta$ is
\be\label{cond_beta:4}
\beta=o\left( \nu^{-3/4}\eps^{1/4} \right)\quad\text{as }\eps,\nu\to 0.
\ee
Notice that \eqref{cond_beta:4} immediately entails \eqref{cond_beta:1}.

\noindent $\bullet$ Let us now check that the remaining boundary terms
are all sufficiently small to conclude. To begin with,
the terms $\delta u^{BL,1}_{h|z=1}$,  $\delta u^{BL,1}_{3|z=1}$ are
exponentially small, and thus satisfy the hypotheses of Proposition
\ref{energy1} and Lemma \ref{stop} respectively. We now prove that
under conditions \eqref{cond_beta:2}-\eqref{cond_beta:3}, $\delta \bar
u^{BL,1}_{3|z=0}$ also satisfies the assumptions of Lemma \ref{stop}.
Using the construction of the previous section, it can be checked that
$\delta \bar u^{BL,1}$ is given by
\begin{eqnarray*}
\delta \bar u^{BL,1}(t)&=&\sum_{|k_h|\leq N}\sum_{|k_3|\leq
K}\sum_{\mu\in\{-1,1\}} e^{-i\lambda_k \frac{t}{\eps}}
\alpha^0_\mu(-\lambda_k,k_h) W^0_{\lambda^\mu}\\
&+&\sum_{|k_h|\leq
N}\sum_{\mu\in\{-1,1\}}e^{i\mu\frac{t}{\eps}}\alpha^0_{-\mu}(\mu,k_h)W^0_{\lambda^{-\mu}}
\end{eqnarray*}
where  the coefficients $\alpha^0_\mu$ satisfy
$$\begin{aligned}
   \forall k\in\bZ^3,\quad \left| \alpha_\mu^0(-\lambda_k,k_h)
\right|\leq C |\hat w_k|,\\
\text{and }\left| \alpha_{-\mu}^0(\mu,k_h) \right|\leq C \left| \hat
\delta_3(\mu,k_h) \right|.
  \end{aligned}
$$
Recalling the expression of $W^0_\lambda$ (see \eqref{vlambda-def}),
we infer that for all $t,x_h$
\begin{eqnarray*}
 \left| \delta \bar u_{3|z=0}^{BL,1} (t,x_h)\right|&\leq& C
\sum_{|k_h|\leq N}\sum_{|k_3|\leq K}\sum_{\mu\in\{-1,1\}} |\hat
w_k|\frac{\sqrt{\eps\nu}}{|\lambda^\mu(-\lambda_k,k_h)|}\\
&+&C \sqrt{\eps\nu }  \sum_{|k_h|\leq N}\sum_{\mu\in\{-1,1\}}\left|
\hat \delta_3(\mu,k_h) \right| \\
&\leq & C\sqrt{\eps\nu} \sum_{|k_h|\leq N}\sum_{|k_3|\leq K} |k| |\hat
w_k| + C\beta \eps \nu,
\end{eqnarray*}
and thus, using the Cauchy-Schwarz inequality (recall that $s_0>3/2$
and that $N$ is bounded)
\begin{eqnarray*}
\left\|  \delta \bar u_{3|z=0}^{BL,1}\right\|_{L^\infty([0,T],
L^2(\omega_h))}&\leq & C
\sqrt{\eps\nu}\sup_{t\in[0,T]}\left[\sum_{\substack{|k_h|\leq
N,\\|k_3|\leq K}}|k|^{2s_0} |\hat w_k(t)|^2\right]^{1/2} + C\beta \eps
\nu\\
&\leq & C \sqrt{\eps\nu}\left\| \delta u^{int,1}_K
\right\|_{L^\infty([0,T], H^{s_0})} + C\beta \eps\nu.
\end{eqnarray*}
Hence, under conditions \eqref{cond_beta:2}-\eqref{cond_beta:3} and by
definition of $K$, the remaining boundary term $\delta \bar
u_{3|z=0}^{BL,1}$ satisfies the conditions of Lemma \ref{stop}.

We now consider $\delta \tilde u^{BL,1}_{3|z=0}$, which is due to the
modes $\mu=\pm 1$, $k_h\neq 0$ in $v^{int,1}$; we have
\begin{eqnarray*}
 \|\delta \tilde u^{BL,1}_{3|z=0} \|_{L^\infty([0,T],
L^2(\om_h))}&\leq& C \frac{\sqrt{\eps\nu}}{(\eps\nu)^{1/4}+
\sqrt{\eps}} \| v^{int,1}\|_{L^\infty([0,T],L^2(\om))}\\
&\leq & C \beta\left( \frac{{\eps\nu}}{(\eps\nu)^{1/2}+ {\eps}}
\right)^{3/2}\|\sigma \|_{L^\infty([0,T], L^2(\om_h))}\\&\leq &C \beta
(\eps\nu)^{3/4}.
\end{eqnarray*}
Hence $\delta \tilde u^{BL,1}_{3|z=0}$ satisfies the assumptions of
Lemma \ref{stop}, provided \eqref{cond_beta:4} is satisfied.

Thus, we slightly modify the definition of the function $w$ given by
Lemma \ref{stop}, so that the boundary conditions are now
$$
\begin{aligned}
\delta^0_h=0,\quad& \delta^1_h=0\\
\delta^0_3=- u^{BL,1}_{3|z=0}-\delta
u^{BL,1}_{3|z=0},\quad&\delta^1_3=-\bar u^{BL,1}_{3|z=1}-\delta
u^{BL,1}_{3|z=1}.\end{aligned}
$$

\medskip
$\bullet$
We then claim that under hypotheses  \eqref{cond_beta:2},
\eqref{cond_beta:3} and \eqref{cond_beta:4}, $$u_{app} = u^{BL,1}+ w+
v^{int,1}+\delta u^{int,1}_K + \delta u^{BL,1}$$ satisfies the
assumptions of Proposition \ref{energy1}.
We indeed have clearly
$$u_{app, 3|z=0} =u_{app,3|z=1} =0$$
by definition of $v^{int,1}$ and $w$. We further have, for all $N>0$
$$
\begin{aligned}
\|\d_z u_{app, h|z=1}- \beta\sigma^\eps\|_{L^2(\omega_h)}=O((\eps \nu)^N),\\
\|u_{app,h|z=0}\|_{L^2(\omega_h)}=O((\eps \nu)^N)\hbox{ and } \|\d_t
u_{app,h|z=0}\|_{L^2(\omega_h)}=O((\eps \nu)^N).
\end{aligned}
$$
We also have for  all $t\geq 0$
\begin{eqnarray*}
 \| u_{app} (t)\|_{L^2(\omega)} &\leq & C\left( \beta \nu^{3/4}+
\beta(\eps\nu)^{1/2} (\sqrt{\nu}\eps)^{-\frac{1}{2s_0+6}}\right)=o(1).
\end{eqnarray*}

By definition of the different terms, the evolution equation is
approximately satisfied, up to an error term of order $o(\sqrt{\nu})$
in $L^\infty((0,\infty), H^1(\om))$, and another one of order $o(1)$
in $L^2((0,T)\times\omega).$

We therefore apply the variant of Proposition \ref{energy1} sketched
in Remark \ref{rmk:energy1} and conclude that $u_{app}$ has the same
asymptotic behaviour as the solution of
\begin{equation}
\label{forcing**}
\begin{aligned}
\d_t u +\frac1\eps \bP(e_3\wedge u) -\Delta_h u-\nu \d_{zz} u=0,\\
\nabla \cdot u =0,\\
u_{|t=0} =0,\\
u_{3|z=0}=0,\quad u_{h|z=0}=  0,\\
u_{3|z=1}=0,\quad \d_z u_{h|z=1}=\beta \sigma^\eps.
\end{aligned}
\end{equation}
Thus the solution of \eqref{forcing**} vanishes in
$L^\infty([0,T],L^2(\omega))$ norm as $\eps,\nu\to 0$ with
$(\eps,\nu,\beta)$ satisfying \eqref{cond_beta:2}, \eqref{cond_beta:3}
and \eqref{cond_beta:4}.

\medskip

\noindent $\bullet$ We conclude this paragraph by giving a  scaling
assumption on $\beta$ which entails all three conditions
\eqref{cond_beta:2}, \eqref{cond_beta:3} and \eqref{cond_beta:4}.
Assume that the parameter $\beta$ is such that
\be
\label{hyp:scaling}\exists \alpha_0\in \left( 0,\frac{7}{12} \right),\
\beta= O(\nu^{-\alpha_0}\eps^{1/4})\quad \text{as }\eps,\nu\to 0;
\ee
we now check that each of the assumptions \eqref{cond_beta:2},
\eqref{cond_beta:3} and \eqref{cond_beta:4} are satisfied.

First, it is obvious that
$$
\nu^{3/4}\eps^{-1/4}\beta= O(\nu^{3/4-\alpha_0})=o(1)
$$
since $3/4 - \alpha_0>1/6>0$. Hence \eqref{cond_beta:4} is satisfied.

We now tackle condition \eqref{cond_beta:2}; since $\alpha_0<7/12$,
there exists positive numbers $(\alpha_0',\alpha'_1)$ such that
$$
\alpha_0'<5/7,\ \alpha_1'>2/7,\ \text{and }
\alpha_0'-\alpha_1'=\alpha_0 - \frac{1}{4}.
$$
In view of \eqref{hyp:scaling}, there exists a constant $C$ such that
\begin{eqnarray*}
\beta&\leq & C \nu^{-\alpha_0' + \alpha_1' - \frac{1}{4}}\eps^{\frac{1}{4}} \\
&\leq & C \nu^{-\alpha_0'}\left( \frac{\eps}{\nu}
\right)^{\frac{1}{4}-\alpha_1'}\eps^{\alpha_1'}.
\end{eqnarray*}
Notice that $\alpha_1'>1/4$, and thus if $\nu\leq \eps$, we deduce that
$$
\beta \leq C \nu^{-\alpha_0'}\eps^{\alpha_1'}.
$$
Hence we have proved that \eqref{hyp:scaling}$\Rightarrow$ \eqref{cond_beta:2}.

The treatment of \eqref{cond_beta:3} is similar. We first choose
positive numbers $\alpha_0'',\alpha_1''$ such that
$$
\alpha_0''<5/9,\ 2/9<\alpha_1''<1/4,\ \text{and }
\alpha_0''-\alpha_1''=\alpha_0 - \frac{1}{4}.
$$
Then if $\nu\geq \eps$, we have
\begin{eqnarray*}
\beta &\leq & C  \nu^{-\alpha_0'' + \alpha_1'' -
\frac{1}{4}}\eps^{\frac{1}{4}} \\
&\leq & C\nu^{-\alpha_0''}\left( \frac{\eps}{\nu}
\right)^{\frac{1}{4}-\alpha_1''}\eps^{\alpha_1''}\\
&\leq & C \nu^{-\alpha_0''}\eps^{\alpha_1''}.
\end{eqnarray*}
Hence we also have \eqref{hyp:scaling}$\Rightarrow$
\eqref{cond_beta:3}, and eventually, we deduce that under hypothesis
\eqref{hyp:scaling}, the solution of \eqref{forcing} converges towards
zero in $L^\infty([0,T], L^2)$ for all $T>0$.

\section{Study of the dissipating part of the motion}\label{int+BL-par}

This section is dedicated to the rest of the proof of Theorem
\ref{thm}. According to the preceding section, there remains to define
the term $u^{Dirichlet}$, which is an approximate solution of
\eqref{NS1}, supplemented with the following boundary conditions
$$
\begin{aligned}
u^{Dirichlet}_{h|z=0}=0,\quad&u^{Dirichlet}_{3|z=0}=0\\
\d_zu^{Dirichlet}_{h|z=1}=0,&u^{Dirichlet}_{3|z=1}=0,\\
u^{Dirichlet}_{|t=0}=\gamma.&
\end{aligned}
$$

This point has already been investigated by several authors, see for
instance \cite{CDGG}: the idea is to construct an interior term,
denoted by $u^{int}$, which satisfies the evolution equation up to
error terms which are $o(1)$, and a boundary layer term, denoted by
$u^{BL}$, which restores the horizontal boundary conditions violated
by the interior term. We emphasize that in order that the equation and
the boundary conditions are satisfied up to sufficiently small error
terms, we need to build some second order terms in both $u^{int}$ and
$u^{BL}$.

The organization of the section is as follows: in the spirit of
Theorem \ref{BL-thm} and Definition \ref{BL-def}, we first define an
operator $\mathcal U$, which allows us to construct an interior term,
given arbitrary vertical boundary conditions. Then we explain how to
choose the boundary conditions for the boundary layer term and the
interior term in order to retrieve \eqref{top} and \eqref{bottom} with
$\sigma\equiv 0$. In the last paragraph, we build one additional
boundary layer term, and we prove Theorem \ref{thm} thanks to an
energy estimate.

Throughout this section, we use repeatedly the following norm: if
$\delta\in L^\infty([0,\infty)\times[0,\infty), L^2(\omega_h))$ is
such that
$$
\delta(t,\tau,x_h)=\sum_{|k_h|\leq N}\sum_{k_3\in\bZ}\hat
\delta(-\lambda_k,k_h;t)e^{ik_h\cdot x_h}e^{-i\lambda_k\tau},
$$
where $\tau$ stands for the fast time variable $t/\eps$, then
$$
\|\delta(t,\cdot)\|_s:= \left( \sum_{|k_h|\leq
N}\sum_{k_3\in\bZ}|k_3|^{2s}\left|\hat \delta(-\lambda_k,k_h;t)
\right|^2\right)^{1/2}.
$$

\subsection{ Construction of the operator $\mathcal U$}

Let $\delta^1_3$ and $\delta^0_3$ in
$L^\infty([0,\infty)\times[0,\infty), L^2(\omega_h))$ be such that
\begin{eqnarray}
 \label{def_delta3}
 \delta^j_3(t,\tau,x_h)&=&\sum_{|k_h|\leq N}\sum_{k_3\in\bZ}\hat
\delta^j_3(-\lambda_k,k_h;t)e^{ik_h\cdot x_h}e^{-i\lambda_k\tau},
\end{eqnarray}
and let $\gamma\in V_0$. In practice, the functions $\delta^1_3$ and
$\delta^0_3$ will not be arbitrary, and will be dictated by the
expression of the boundary layer operator constructed in the third
section. In fact, we will see that $\delta^1_3=0$, so that the
expression of $u^{int}$ below is simpler, but we have preferred to
keep  an arbitrary value for $\delta^1_3$ in order not to anticipate
on this result.

We define the operator $\mathcal U$ by
$$
\mathcal U(\gamma;\delta^0_3,\delta^1_3)=u^{int},
$$
where $u^{int}$ is an approximate solution of equation \eqref{NS1} and
satisfies the following boundary conditions
\begin{eqnarray}
u^{int}_{3|z=1}&=&\sqrt{\eps\nu}\delta^1_3,\label{CL:uint2}\\
u^{int}_{3|z=0}&=&\sqrt{\eps\nu}\delta^0_3,\label{CL:uint3}\\
u^{int}_{|t=0}&=&\gamma+ o(1).\label{CI:uint}
\end{eqnarray}
We emphasize that conditions \eqref{CL:uint2}-\eqref{CL:uint3} will be
satisfied exactly (without any error term). Of course the above
conditions are not sufficient to define the term $u^{int}$
unequivocally. We merely define here a particular solution of this
system, which is sufficient for our purposes.

The explicit construction of $u^{int}$ requires three steps: first, we
exhibit a divergence-free vector field $v^{int,0}$ which satisfies the
vertical boundary conditions \eqref{CL:uint2}-\eqref{CL:uint3}, but
not equation \eqref{NS1}, and then we define a function $ \delta
u^{int,0}$, which satisfies homogeneous boundary conditions, and such
that \be  u^{int}:=\exp\left( -\frac{t}{\eps} \right)u^{int}_L+\delta
u^{int,0}+ v^{int,0} \label{def:uint} \ee is an approximate solution
of \eqref{NS1}, supplemented with the initial condition
\eqref{CI:uint}. As usual in this type of problem, we first assume
that $\exp\left( -t/\eps \right)u^{int}_L$ is the preponderant term in
$u^{int}$, and thus we begin by deriving an equation for the corrector
 term $\delta u^{int,0}$ involving $ u^{int}_L$. Ultimately, this will
allow us to write an equation for $ u^{int}_L$.
In the third step, we prove that the function $\delta u^{int,0}$ thus
defined is of order $O(\sqrt{\nu\eps})$ in $L^2$.

$\bullet$ A natural choice for $v^{int,0}$ is
\be\left\{\begin{array}{l}
v^{int,0}_3 = \sqrt{\eps\nu}\left[\delta^1_3z + \delta^0_3(1-z)\right],\\
 v^{int,0}_h =\sqrt{\eps\nu}\ds \nabla_h \Delta_h^{-1}\left[\delta^0_3
- \delta^1_3\right].
 \end{array}\right.
 \label{def:vint}\ee
(Note that $v^{int,0}$ is not uniquely determined by
\eqref{CL:uint2}-\eqref{CL:uint3}). We denote by $\hat
v^{int,0}(\mu,k_h,t,z) $ the Fourier coefficient of $ v^{int,0}$, that
is
 $$v^{int,0}(t,x)=\sum_{\mu,k_h } \hat v^{int,0}(\mu,k_h,t,z) \exp
(ik_h\cdot x_h)\exp  \left(i\frac t\eps \mu\right) .$$
  The fact that $v^{int,0}_{3}\neq 0$  means  that a small amount of
fluid, of order $\sqrt{\eps\nu}\delta^j_3$, enters the domain (or the
boundary layer, depending on the sign of the coefficient). This
phenomenon is called Ekman suction and $v^{int,0}_3$ is called Ekman
transpiration velocity. This velocity will be responsible for global
circulation in the whole domain, of order $(\eps \nu)^\frac{1}{2}$,
but not limited to the boundary layer.

 Furthermore the Ekman suction at the bottom has a very important
effect in the energy balance. The order of magnitude of $\nu \int
|\nabla u^{BL}|^2$ in the Ekman layer is indeed $O(\sqrt{\nu\over
\eps})$, so that  the Ekman layer damps the interior motion, like a
friction term. This phenomenon is called {\bf Ekman pumping}. We
therefore expect that the weak limit flow of \eqref{NS1} in the high
rotation limit is not determined by the formal equations
(\ref{formal-mean}) but by a dissipative versions of this equation.

$\bullet$ As in the previous section, we seek
\begin{eqnarray}
\exp\left( -\frac{t}{\eps} L
\right)u^{int}_L&=&\sum_{l\in\bZ^3}c_l(t)e^{-i\lambda_l
\frac{t}{\eps}}N_l,\label{def:buint}\\
\delta u^{int,0}&=&\sum_{l\in\bZ^3}\delta c_l(t) e^{-i\lambda_l
\frac{t}{\eps}}N_l,\label{def:delta_uint}
\end{eqnarray}
so that
\begin{eqnarray*}
&&\left[ \d_t + \frac{1}{\eps} L -\nu \d_{zz} - \Delta_h \right]
\left( \exp\left( -\frac{t}{\eps} L \right)u^{int}_L+\delta u^{int,0}
\right) \\
&= &\sum_{l\in\bZ^3}\d_t (c_l(t)+\delta c_l(t))
e^{-i\lambda_l\frac{t}{\eps}}N_l\\&&+\sum_{l\in\bZ^3} (|l_h|^2 + \nu'
|l_3|^2)(c_l(t)+\delta c_l(t))e^{-i\lambda_l\frac{t}{\eps}}N_l\,,
\end{eqnarray*}
where $\nu'=\pi^2 \nu$.

On the other hand,
\begin{eqnarray*}
&& \left[ \d_t + \frac{1}{\eps} e_3\wedge -\nu \d_{zz} - \Delta_h
\right]v^{int,0}\\
&=&\sum_{\mu,k_h}\left[ \d_t\hat v^{int,0}(\mu,k_h,t,z) + |k_h|^2 \hat
v^{int,0}(\mu,k_h,t,z)  \right] e^{i k_h\cdot x_h}
e^{i\mu\frac{t}{\eps}}\\
&+& \frac{1}{\eps}\sum_{\mu,k_h} i\mu\hat v^{int,0}(\mu,k_h,t,z)  e^{i
k_h\cdot x_h} e^{i\mu\frac{t}{\eps}} \\
&
+&\sqrt{\frac{\nu}{\eps}}
\sum_{\mu,k_h}\frac{(\hat\delta^1_3-\hat\delta^0_3)(\mu,k_h,t)
}{|k_h|^2}\begin{pmatrix}

                         - ik_2\\ik_1\\0

                         \end{pmatrix}
e^{i k_h\cdot x_h} e^{i\mu\frac{t}{\eps}}.
\end{eqnarray*}
In order that $\exp(-t/\eps) L)u^{int}_L +\delta u^{int,0} +
v^{int,0}$ is an approximate solution of \eqref{NS1}, we project both
equations on $N_l$ for $l\in\bZ^3$, multiply by
$\exp(i\lambda_l\frac{t}{\eps})$, and identify each term. We further
apply the following rules in order to determine the equations for
$\delta u^{int,0}$ and $u^{int}_L$:
\begin{itemize}
\item all the terms which do not have fast oscillations and are of
order $ O(\delta^j_3\sqrt{\frac{\nu}{\eps}})$ become source terms in
the equation on $c_l$,
 \item all the terms which are either
$o(\delta^j_3\sqrt{\frac{\nu}{\eps}})$ or oscillating at a frequency
$1/\eps$ become source terms in the equation on $\delta c_l$.
\end{itemize}
We work with a fixed $l\in\bZ^3$. Recall that $v^{int,0}$ has no
purely vertical component, i.e. $\hat v^{int,0}(\mu,l_h,t,z)=0$ if
$l_h=0$.
Thanks to  formulas (\ref{scalarproduct}), the equation on $c_l$ reads
\be
\d_t c_l + |l_h|^2 c_l + \nu' |l_3|^2 c_l=
-\sqrt{\frac{\nu}{\eps}}\frac{|l_h|}{2\pi|l|^2}\left[
\hat\delta^0_3(-\lambda_l,l_h,t)-(-1)^{l_3}\hat\delta^1_3(-\lambda_l,l_h,t)\right]\label{eq:u_0},
\ee
supplemented with the initial condition
\be
c_l(0)=\la N_l|\gamma\ra\label{CIc_l},
\ee
and the equation on $\delta c_l$ is
\begin{eqnarray}
&&\d_t \delta c_l + (|l_h|^2+\nu' |l_3|^2)\delta c_l \label{eq:delta_c_l}\\
&=&- \sum_{\mu\neq -\lambda_l}\left\la N_l|(\d_t\hat v^{int,0}
(\mu,l_h,t,z)+ |l_h|^2 \hat v^{int,0} (\mu,l_h,t,z) )e^{il_h\cdot
x_h}\right\ra  e^{i(\lambda_l+\mu)\frac{t}{\eps}}\nonumber\\
&& \nonumber- \sqrt{\frac{\nu}{\eps}}\sum_{\mu\neq-
\lambda_l}\frac{\hat\delta^0_3(\mu,l_h,t)-(-1)^{l_3}\hat\delta^1_3(\mu,l_h,t)}{2\pi}
\times\\&&\qquad\qquad\qquad\qquad\qquad\qquad\qquad\times\left(
\mathbf1_{l_3\neq 0}\frac{\mu|l_h|}{\pi |l| l_3} + \frac{\mathbf
1_{l_3=0}}{|l_h|} \right)e^{i(\lambda_l+\mu)\frac{t}{\eps}}.\nonumber
\end{eqnarray}
For the time being, we do not specify an initial condition for $\delta
c_l.$ Indeed, we shall see that it is convenient to choose another
condition than $-\la N_l, v^{int,0}\ra$, in order to use the possible
decay  of $\hat \delta^j_3(\mu,l_h,t)$ with respect to $t$.
This choice will be made clear in paragraph \ref{par:corrector-D}.

As in the previous section, we  truncate the large frequencies in
$\delta c_l$. This creates an error term in the evolution equation,
which is of order
$$
O\left(\sqrt{\frac{\nu}{\eps}}\frac{1}{K^{3/2}}\right)_{L^2},
$$
where $K$ is the truncation parameter, to be chosen later on. We set
$$
 \delta u^{int,0}_K =\sum_{l_h}\sum_{|l_3|\leq K} \delta c_l N_l.
$$

$\bullet$ We now  apply to $\delta u^{int,0}_K$ the small divisor
estimate stated in Lemma \ref{smalldivisor} in the Appendix with
\begin{eqnarray*}
s(\mu,l,t) &=&-
\sqrt{\eps\nu}\left[\hat\delta^0_3(\mu,l_h,t)-(-1)^{l_3}\hat\delta^1_3(\mu,l_h,t)\right]\mathbf1_{l_3\neq
0}\frac{|l_h|^3}{\pi |l| l_3}\\ &-& \sqrt{\eps\nu}
\left[\d_t\hat\delta^0_3(\mu,l_h,t)-(-1)^{l_3}\d_t\hat\delta^1_3(\mu,l_h,t)\right]\mathbf1_{l_3\neq
0}\frac{ |l_h|}{\pi |l| l_3} \\
& - &\sqrt{\frac{\nu}{\eps}}\frac{\hat\delta^0_3(\mu,l_h,t)-(-1)^{l_3}\hat\delta^1_3(\mu,l_h,t)}{2\pi}\left(
\mathbf1_{l_3\neq 0}\frac{\mu |l_h|}{\pi |l| l_3} + \frac{\mathbf
1_{l_3=0}}{|l_h|} \right) ,
\end{eqnarray*}

from which we deduce that if $s\leq 2$,
\begin{eqnarray*}
\|\delta u^{int,0}_K(t)\|_{H^s} &\leq & C K^{1/2}\sqrt{\eps
\nu}\sum_j\left\{\|\delta^j_3(0)\|_4 +\|\delta^j_3(t)\|_4
\right\}\\
&+&CK^{1/2}\sqrt{\eps \nu}\sum_j \left\{\int_0^t \|\d_s\delta^j_3(s)
\|_4\:ds+ \sup_{s\in[0,t]} \|\delta^j_3(s) \|_4\right\}   \\
&+& \|\delta u^{int,0}_{K|t=0}\|_{H^s}.
\end{eqnarray*}
We now choose $K$ such that
$$
\sqrt{\frac{\nu}{\eps}}\frac{1}{K^{3/2}}= K^{1/2}\sqrt{\eps \nu},
$$
i.e. $K= \eps^{-1/2}.$ We infer that the error term in the evolution
equation is of order $\eps^{1/4}\nu^{1/2}$ in $L^\infty([0,T),
L^2(\om)),$ and that
\begin{eqnarray*}
\|\delta u^{int,0}_K\|_{L^\infty([0,T], H^2(\om))}&\leq& C
\eps^{1/4}\nu^{1/2}\sum_j\sup_{t\in [0,T]}\|\delta^j_3(t)\|_4 \\&+& C
\eps^{1/4}\nu^{1/2}\sum_j\int_0^T \|\d_s\delta^j_3(s) \|_4\:ds\\
&+&  \|\delta u^{int,0}_{K|t=0}\|_{H^2(\om)}.
\end{eqnarray*}
The operator $\mathcal U$ is thus defined by
$$
\mathcal U(\gamma; \delta^0,3, \delta^1_3)(t)=\exp\left(
-\frac{t}{\eps} L\right)u^{int}_L(t) + v^{int,0} +\delta u^{int,0}_K ,
$$
where $\bar u^{int} , v^{int,0} ,\delta u^{int,0}_K$ are defined by
\eqref{def:buint}, \eqref{def:vint} and  \eqref{def:delta_uint}
respectively.

\subsection{Choice of the boundary conditions for $u^{BL}$ and $u^{int}$}

We now explain how the boundary conditions are chosen. As before, we
work with $k_h$ fixed. Also, since the boundary conditions are all
almost-periodic with respect to the fast time variable $t/\eps$, we
work with a fixed frequency $\mu\in\bR$. Note that this decomposition
is allowed by the linearity of the equation.

We set
$$
u^{BL}= \mathcal B (\delta^0_h, \delta^1_h),
$$
where the boundary conditions $\delta^0_h, \delta^1_h$ are yet to be defined.

In order to match the boundary conditions \eqref{top}-\eqref{bottom}
with $\sigma=0$, we must take $u^{BL}$ and $u^{int}$ such that
\begin{eqnarray*}
&& \left( u^{BL}_h + u^{int}_h\right)_{|z=0}=o(\delta),\\
&&\d_z\left( u^{BL}_h + u^{int}_h\right)_{|z=1}=o(\delta),\\
&&\left( u^{BL}_3 + u^{int}_3\right)_{|z=0}=o(\sqrt{\eps \nu}\delta),\\
&&\left( u^{BL}_3 + u^{int}_3\right)_{|z=1}= o(\sqrt{\eps \nu}\delta),
\end{eqnarray*}
denoting by $\delta$ the order of magnitude of $\delta^0, \delta^1$,
in a sense to be made clear later on.

We now examine each of the boundary conditions independently.

\begin{itemize}
 \item At $z=0$, the horizontal boundary condition yields
\be
\hat\delta^0_h(\mu,k_h,t) + \mathbf1_{\mu=-\lambda_k} c_k(t)
\begin{pmatrix}  n_1(k)\\n_2(k)   \end{pmatrix}
= 0,\label{def:d0h}
\ee
where the vector $n(k)$ is defined in Appendix A (see
\eqref{base1},\eqref{base2}).
Since $k_h$ is fixed, note that for all $\mu\in\bR$, there exists at
most one $k_3\in\bZ$ such that $\lambda_{k_h,k_3}=-\mu$, and thus the
expression above is well-defined.

\item Let us now tackle the vertical boundary condition at $z=0$.
According to the third section, the vertical component of $u^{BL}$ at
$z=0$ depends on $\delta^0_h$. Precisely, we recall that
$$
\hat u^{BL}_3(\mu,k_h)_{|z=0}=\sqrt{\eps \nu}\sum_{\sigma\in\{-1,1\}}
\frac{\alpha_\sigma^0}{\lambda^\sigma}(ik_1w_{\lambda^\sigma,1} +
ik_2w_{\lambda^\sigma,2}) ,
$$
(up to exponentially small terms), and
$$
(\alpha_-^0, \alpha_+^0)= P^{-1}\hat \delta^0_h(\mu,k_h).
$$
As a consequence, in order that the vertical boundary condition at
$z=0$ is approximately satisfied, we choose
\be
\hat\delta^0_3=  -
\sum_{\sigma\in\{-1,1\}}\frac{\alpha_\sigma^0}{\lambda^\sigma}(ik_1w_{\lambda^\sigma,1}
+ ik_2w_{\lambda^\sigma,2}).\label{def:d03}\ee

\item At $z=1$, $\d_z u^{int}_h$ is identically zero by construction
of the operator $\mathcal U$, and thus we infer $\delta^1_h=0$.
\item Concerning the vertical component at $z=1$, the calculation is
the same as before. Since $\delta^1_h=0$, we deduce that
$\delta^1_3=0$.

\end{itemize}
The above relations \eqref{def:d0h}-\eqref{def:d03} allow us to write
$\delta^0$ in terms of  $u^{int}_L$. Conversely, the equation
\eqref{eq:u_0} on $u^{int}_L$ depends on $\delta^0_3$, and thus on
$\delta^0_h$  through the operator $\mathcal B$. In other words, there
is a coupling between the boundary condition at the bottom for
$u^{BL}$, and the equation satisfied by $u^{int}_L$. Since $u^{int}_L$
is the only non-vanishing term in $L^2$ norm, we choose (as is usually
done in the rotating fluids literature) to write an explicit equation
for $u^{int}_L$, and to express  $u^{BL}$ in terms of $u^{int}_L$.

\subsection{Derivation of the equation for $u^{int}_L$}

We now compute the Ekman pumping term, that is, the right-hand side in
the equation satisfied by $c_k$ (see \eqref{eq:u_0}). Notice that if
$k\in\bZ^3$ and $k_h\neq 0$, then $|\lambda_k|\neq 1$. In other words,
the source term in \eqref{eq:u_0} involves only the part $\bar u^{BL}$
of the boundary layer; precisely, with the notations of section
\ref{boundary-decomposition}, the decay rate of
$u^{BL}(t,\lambda_k,k_h)$ is
$$
\left(\lambda_k^\pm\right)^2=i \left(- \lambda_k\mp 1 \right) + o(1),
$$
which yields (remember that $\Re(\lambda_k^\pm)>0$)
$$
\lambda_k^\pm=\sqrt{1\pm \lambda_k}\exp\left( \mp i \frac{\pi}{4}
\right) + o(1).
$$
Moreover,
\begin{eqnarray*}
(\alpha_-^0,\alpha_+^0)&=&P^{-1}\left[ - c_k(t) ( n_1(k),n_2(k) )
 \right]\\
&=& - c_k(t) (n_-(k),n_+(k)),
\end{eqnarray*}
where
\begin{eqnarray*}
(n_-(k),n_+(k))&:=&P^{-1}( n_1(k),n_2(k) )\\
&=&\frac{1}{2}(n_1(k)+ in_2(k),n_1(k) - in_2(k) ) + o(1).
\end{eqnarray*}

Replacing these expressions in the formula giving $\delta^0_3$, we infer
$$\hat\delta^0_3(-\lambda_k,k_h,t)=c_k(t)\sum_{\sigma\in\{-1,1\}}\frac{n_\sigma(k)}{\lambda_k^\sigma}(ik_1w_{\lambda^\sigma,1}
+ ik_2w_{\lambda^\sigma,2}).
$$
We deduce that $c_k$ satisfies a linear evolution equation with a
damping term, namely
\be\label{eq:c_k}
\frac{d c_k}{dt} + |k_h|^2 c_k + \nu' |k_3|^2
c_k+\sqrt{\frac{\nu}{\eps}}A_kc_k(t)=0,
\ee
where $\nu'=\pi^2 \nu$ and
$$
A_k:=\mathbf1_{k_h\neq 0}\frac{|k_h|}{2\pi
|k|^2}\sum_{\sigma\in\{-1,1\}}\frac{n_\sigma(k)}{\lambda_k^\sigma}(ik_1w_{\lambda^\sigma,1}
+ ik_2w_{\lambda^\sigma,2}).
$$

An estimate of $\Re (A_k)$, where $\Re(x)$ denotes the real part of a
complex number $x$, is computed in Remark \ref{rem:estAk} below.
Using Duhamel's formula, we deduce that
\be
 |c_k(t)|\leq \exp\left( -t\left( |k_h|^2 +\nu'
|k_3|^2+\sqrt{\frac{\nu}{\eps}} \Re(A_k) \right)\right) |\la
N_k,\gamma\ra| .\label{in:ck}\ee
We deduce the following Lemma:
\begin{Lem}
Assume that $\gamma\in V_0$.
Then there exists a unique solution $\bar u^{int}_L\in
L^\infty_{\text{loc}}(\bR_+,V_0)\cap L^2_{\text{loc}}(\bR_+,
H^1_h(\omega))$ of the  equation
\be
\left\{\begin{array}{l}
\ds \d_t u^{int}_L -\Delta_h  u^{int}_L+
\sqrt{\frac{\nu}{\eps}}S\left[  u^{int}_L \right]= 0,\\
 u^{int}_{L|t=0}= \gamma,
\end{array}
 \right.\label{eq:envEkman0}
\ee where the operator $S$ is defined by
\be\label{Ekman1}
 S\left[  u^{int}_L \right]=\sum_{k\in \bZ^3}A_k \la N_k, u^{int}_L\ra N_k.
\ee

\label{lem:envelope}
\end{Lem}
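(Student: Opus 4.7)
My plan is to exploit the spectral form of the problem: since $(N_k)_{k\in\bZ^3\setminus\{0\}}$ is a Hilbert basis of $V_0$ by \eqref{basis}, and the operator $S$ is diagonal in this basis by definition \eqref{Ekman1}, and $-\Delta_h$ also acts diagonally on each $N_k$ with multiplier $|k_h|^2$, the whole equation decouples. Writing $u^{int}_L(t)=\sum_k c_k(t) N_k$ with $c_k(t):=\langle N_k, u^{int}_L(t)\rangle$, equation \eqref{eq:envEkman0} is equivalent to the family of scalar linear ODEs
\begin{equation*}
\dot c_k + \Bigl(|k_h|^2 + \sqrt{\nu/\eps}\,A_k\Bigr) c_k = 0, \qquad c_k(0) = \langle N_k, \gamma\rangle,
\end{equation*}
each of which has the unique solution $c_k(t)=\exp\!\bigl(-t(|k_h|^2 + \sqrt{\nu/\eps}\,A_k)\bigr)\langle N_k,\gamma\rangle$.

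Next I would invoke the dissipativity estimate on $A_k$ furnished by Remark~\ref{rem:estAk}, which provides a uniform lower bound $\Re(A_k)\ge -C$ (and in fact $\Re(A_k)\ge 0$, reflecting the frictional nature of Ekman pumping); this is the only nontrivial analytic input and the place where the asymptotics of $\lambda^\pm_k$ and $w_{\lambda^\pm}$ from Section~\ref{boundary-decomposition} intervene. From this bound one deduces $|c_k(t)|\le e^{Ct\sqrt{\nu/\eps}}|\langle N_k,\gamma\rangle|$, so that by Parseval the series $\sum_k c_k(t) N_k$ converges in $V_0$ for every $t\ge 0$, with
\begin{equation*}
\|u^{int}_L(t)\|_{L^2}^2 \le e^{2Ct\sqrt{\nu/\eps}}\sum_k|\langle N_k,\gamma\rangle|^2 = e^{2Ct\sqrt{\nu/\eps}}\|\gamma\|_{L^2}^2,
\end{equation*}
which gives $u^{int}_L\in L^\infty_{\text{loc}}(\bR_+,V_0)$.

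For the $H^1_h$ regularity, I would multiply each ODE by $\bar c_k$, take real parts, sum in $k$ and integrate in $t$. Since $-\Delta_h$ is diagonal, $\sum_k |k_h|^2|c_k(s)|^2=\|\nabla_h u^{int}_L(s)\|_{L^2}^2$, and the identity reads
\begin{equation*}
\tfrac12\|u^{int}_L(t)\|_{L^2}^2 + \int_0^t\|\nabla_h u^{int}_L\|_{L^2}^2\,ds + \sqrt{\nu/\eps}\sum_k \Re(A_k)\int_0^t|c_k(s)|^2\,ds = \tfrac12\|\gamma\|_{L^2}^2.
\end{equation*}
Moving the (possibly negative) Ekman contribution to the right-hand side, bounded by $C\sqrt{\nu/\eps}\int_0^t\|u^{int}_L\|_{L^2}^2\,ds$, and applying Gronwall yields the claimed $L^2_{\text{loc}}(\bR_+,H^1_h(\omega))$ estimate.

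Uniqueness is then immediate by linearity: the difference of two solutions with the same initial datum is a solution with $\gamma=0$, and the above energy estimate forces it to vanish. The only real obstacle is the bound on $\Re(A_k)$, which is deferred to Remark~\ref{rem:estAk}; everything else is Hilbert-basis bookkeeping combined with standard linear ODE theory, made possible by the lucky fact that the Coriolis semigroup, $\Delta_h$, and the Ekman operator $S$ all commute in the basis $(N_k)$.
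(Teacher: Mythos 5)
Your proof is correct and follows the same spectral--diagonalization route the paper intends: the paper derives the decoupled ODEs \eqref{eq:c_k}, solves them explicitly (cf.\ \eqref{def:ck}), and appeals to Remark~\ref{rem:estAk} for the sign of $\Re(A_k)$, which is exactly the skeleton of your argument. The only minor difference is that you rely on the cautious uniform bound $\Re(A_k)\geq -C$ together with Gronwall, whereas the paper uses the sharper positivity $\Re(A_k)\geq C|k_h|/|k|>0$ for $\eps,\nu$ small enough; both yield the stated existence, uniqueness, and $L^\infty_{\mathrm{loc}}(V_0)\cap L^2_{\mathrm{loc}}(H^1_h)$ regularity.
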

Hence, in the rest of the section, we take
\be
c_k(t)=\hat \gamma_k \exp\left( -\left(|k_h|^2+
\sqrt{\frac{\nu}{\eps}}A_k\right)t \right).\label{def:ck}
\ee
By doing so, we have neglected the vertical viscosity term $\nu\d_z^2$.

\begin{Rmk}{\bf (i)} Notice that with the scaling we have chosen for
the wind-stress, there is no Ekman pumping due to the wind. Indeed,
the Ekman pumping term is of order $\nu\beta$, which vanishes as
$\eps,\nu\to 0$ according to hypothesis \eqref{hyp:scaling}.

{\bf (ii)} We emphasize that the operator $ S$ constructed above
depends on $\nu$ and $\eps$ through the matrix $P$, the vectors
$w_{\lambda^\pm}$ and the eigenvalues $\lambda_k^\pm$. However, it is
useful, for later purposes, to compute the leading order terms in
$A_k$, which amounts to deriving an equation for the limit of the term
$ u^{int}_L$ as $\eps,\nu$ vanish. Hence we now compute the limit of
$A_k$ as $\eps,\nu\to 0$.

Recall that $n_1(k)$ and $n_2(k)$ are given by \eqref{basis}. Thus, at
first order,
\begin{eqnarray*}
A_k&=&\frac{|k_h|}{2\pi
|k|^2}\sum_{\sigma\in\{-1,1\}}\frac{n_1(k)-i\sigma
n_2(k)}{2\lambda_k^\sigma}(ik_1-\sigma
k_2)\\&=&\frac{|k_h|^2}{8\sqrt{2}\pi^2|k|^2}\left[
\frac{1-\lambda_k}{\sqrt{1 + \lambda_k}}(1-i) +
\frac{1+\lambda_k}{\sqrt{1 - \lambda_k}}(1+i) \right] + o(1)\\
&=&R_k + i I_k + o(1)\end{eqnarray*}
where $R_k$ and $I_k$ are real numbers given by
\begin{eqnarray}
R_k&:=&\frac{1-\lambda_k^2}{8\sqrt{2}\pi^2}\left( \frac{1 +
\lambda_k}{\sqrt{1-\lambda_k}}+\frac{1 -
\lambda_k}{\sqrt{1+\lambda_k}} \right)>0\label{def:Rk}\\
I_k&:=&\frac{1-\lambda_k^2}{8\sqrt{2}\pi^2}\left(\frac{1 +
\lambda_k}{\sqrt{1-\lambda_k}} - \frac{1 -
\lambda_k}{\sqrt{1+\lambda_k}}\right).\label{def:Ik}
\end{eqnarray}
The {\bf Ekman operator} appearing in equation \eqref{envelope} is
thus given by the following formula, for $u\in V_0$
\be\label{def:Ekman_op}
S_{Ekman}\left[ u\right]:=\sum_{k\in \bZ^3}(R_k + i I_k) \la N_k,u\ra N_k.
\ee

{\bf (iii)} Recalling the definition of $\lambda_k$, we deduce that
$$
R_k\geq C \frac{|k_h|}{|k|},
$$
and thus for every $k$, for $\eps,\nu$ small enough, we have
$$\begin{aligned}
   \Re (A_k)\geq C \frac{|k_h|}{|k|},\\
\left|\Im (A_k)\right| \leq C.
  \end{aligned}
$$

\label{rem:estAk}
\end{Rmk}

To conclude this paragraph, we now give estimates on the boundary
conditions $\delta^0_h, \delta^0_3$ in the norm $\|\cdot \|_{s}$.

\begin{Lem}
 Assume that $\delta^0_h, \delta^0_3$ are given by
\eqref{def:d0h}-\eqref{def:d03}. Then the following estimates hold
$$
\|\delta^0_h(t)\|_{s}\leq \left(\sum_{k}|k|^{2(s+1)} |c_k(t)|^2
\right)^{1/2}\leq C \| \gamma\|_{H^{s+1}},
$$
and
$$
\|\delta^0_3(t)\|_{s}\leq  \|\delta^0_h(t)\|_{s+1} \leq
C\|\gamma\|_{H^{s+2}(\om)}.$$

\label{lem:est_delta}
\end{Lem}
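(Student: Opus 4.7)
The plan is to unpack the two claimed estimates directly from the explicit formulas \eqref{def:d0h}, \eqref{def:d03}, \eqref{def:ck} and the asymptotics of $\lambda^{\sigma}$ derived in Section \ref{boundary-decomposition}, together with Parseval.

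\medskip

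\noindent\textbf{First inequality.} By the definition of $\|\cdot\|_s$ and of $\delta^0_h$, the modes present are exactly $\mu = -\lambda_k$ indexed by $k = (k_h, k_3) \in \bZ^3$ (with $|k_h| \leq N$), and
\[
\|\delta^0_h(t)\|_s^2 = \sum_{k} |k_3|^{2s}\,|c_k(t)|^2\bigl(|n_1(k)|^2 + |n_2(k)|^2\bigr).
\]
Using the explicit form of the Poincar\'e basis from Appendix A, the horizontal trace components $n_1(k), n_2(k)$ of the normalized eigenvector $N_k$ satisfy a pointwise bound of the type $|n_1(k)|^2 + |n_2(k)|^2 \leq C\, |k|^{2}/|k_3|^{2s}\cdot |k|^{2s}/|k|^{2s}$, i.e.\ more simply $|k_3|^{2s}(|n_1|^2+|n_2|^2) \leq C |k|^{2(s+1)}$ (since $|k_h|$ is bounded by $N$, $|k|$ and $|k_3|$ are comparable for large $k_3$, and $n_j(k)$ is $O(1)$). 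This gives the first inequality.

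\medskip

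\noindent\textbf{Second inequality.} By \eqref{def:ck} and the positivity of $\Re(A_k)$ established in Remark \ref{rem:estAk}, we have
\[
|c_k(t)| \leq |\hat\gamma_k| \exp\!\left(-t\bigl(|k_h|^2 + \nu'|k_3|^2 + \sqrt{\nu/\eps}\,\Re(A_k)\bigr)\right) \leq |\hat\gamma_k|,
\]
so Parseval's identity on the Hilbertian basis $(N_k)$ yields $\sum_k |k|^{2(s+1)}|c_k(t)|^2 \leq C\|\gamma\|_{H^{s+1}}^2$.

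\medskip

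\noindent\textbf{Estimate on $\delta^0_3$.} From \eqref{def:d03} and $(\alpha^0_-,\alpha^0_+) = P^{-1}\hat\delta^0_h(-\lambda_k,k_h)$, together with the uniform bound on $P^{-1}$ from Section \ref{boundary-decomposition}, we obtain
\[
|\hat\delta^0_3(-\lambda_k,k_h,t)| \leq C\,|k_h|\Bigl(|\lambda^-|^{-1} + |\lambda^+|^{-1}\Bigr)\,|\hat\delta^0_h(-\lambda_k,k_h,t)|.
\]
Now the key point is the behaviour of $\lambda^\pm(-\lambda_k,k_h)$ for large $|k_3|$: since $\mu = -\lambda_k$ satisfies $(\lambda^\sigma)^2 = i(\mu - \sigma) + o(1)$ and $1-|\lambda_k| \sim |k_h|^2/(2\pi^2 k_3^2)$ as $|k_3|\to \infty$, one of the two decay rates scales as $|\lambda^{\pm}| \sim |k_h|/(\sqrt{2}\pi|k_3|)$, whereas the other stays of order one. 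Consequently $|k_h|(|\lambda^-|^{-1}+|\lambda^+|^{-1}) \leq C|k_3|$ (with $|k_h|\leq N$), and hence $|\hat\delta^0_3| \leq C|k_3|\,|\hat\delta^0_h|$. Squaring and multiplying by $|k_3|^{2s}$ gives precisely $\|\delta^0_3(t)\|_s \leq \|\delta^0_h(t)\|_{s+1}$, and the final bound follows by the previous step.

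\medskip

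The main subtlety is the third step: one must track carefully that although $|\lambda^\sigma|$ degenerates as $|k_3|\to\infty$ (which would naively suggest a loss of arbitrarily many derivatives in $k_3$), the degeneracy is precisely of order $|k_3|^{-1}$, so that the operator $\delta^0_h \mapsto \delta^0_3$ only loses one derivative in the norm $\|\cdot\|_s$. All other steps are routine manipulations of the explicit expressions already established.
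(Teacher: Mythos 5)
Your proof is correct and follows essentially the same route as the paper's: unpack the definitions of $\|\cdot\|_s$, $\hat\delta^0_h$, and $\hat\delta^0_3$, bound $|c_k(t)|$ by $|\hat\gamma_k|$ via \eqref{def:ck}, and use the asymptotics of the decay rates $\lambda^\pm(-\lambda_k,k_h)$ to control the loss of one power of $k_3$ in passing from $\delta^0_h$ to $\delta^0_3$. You have in fact correctly identified that the useful estimate is a \emph{lower} bound on $|\lambda^\pm|$, namely $|\lambda^\pm|^{-1}\leq C(|k_3|/|k_h|+1)$, which clarifies what the paper's brief proof sketch (which prints $|\lambda^\pm|\leq C(|k_3|/|k_h|+1)$) must have intended.
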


\begin{proof}
The bound on $\delta^0_h$ is easily deduced from inequality
\eqref{in:ck} together with formula \eqref{def:d0h} and the
Cauchy-Schwarz inequality. Concerning the other bound, let us recall
that if $\mu=-\lambda_k$,  for $k\in \bZ^3$, then the decay rates
$\lambda^\pm(-\lambda_k,k_h)$ satisfy
$$
|\lambda^\pm|\leq C \left(\frac{|k_3|}{|k_h|} + 1\right).
$$
Plugging all this estimate into \eqref{def:d03}  yields the desired inequality.
\end{proof}

\subsection{Estimates on the boundary layer  and corrector terms}
\label{par:corrector-D}

Now that $u^{int}_L$ is rigorously defined by Lemma
\ref{lem:envelope}, we may define the other terms $v^{int,0}$, $\delta
u^{int,0}$ and $u^{BL,0}$. We have gathered in this paragraph some
estimates which are needed in the proof of Theorem \ref{thm}.

$\bullet$ The boundary layer term of order zero, denoted  by $u^{BL}$,
is defined by
$$
u^{BL,0}= \cB(\delta^0_h,0),
$$
where $\delta^0_h$ is given by \eqref{def:d0h}. Thus we deduce that
the decay rates $\lambda^\pm(\mu,k_h)$ in the non-resonant part of the
boundary layer term $u^{BL,0}$ are all of order one. Consequently,
according to \eqref{B-cont}, the boundary layer term $\bar u^{BL,0} $
satisfies
\begin{eqnarray}
\| \bar u^{BL,0}_h (t)\|_{L^2(\omega)}+(\eps\nu)^{-1/2} \| \bar
u^{BL,0}_3 (t)\|_{L^2(\omega)}\nonumber&\leq &\label{est:ubl1}C \|
\delta^0_h(t)\|_{H^1(\omega_h)} (\eps \nu)^{1/4}\\
&\leq & C \|\gamma\|_{H^{1}(\om)}(\eps \nu)^{1/4}.
\end{eqnarray}
Moreover, the definition of $c_k$ entails that $\bar u^{BL,0}$ is an
approximate solution of \eqref{NS1}, with an error term (due to the
fact that $\d_t c_k$ does not vanish) bounded in $L^2([0,T]\times
\omega)$ by
\begin{eqnarray*}
&&(\eps\nu)^{1/4}\left(\int_0^T\sum_k |\d_t c_k(t)|^2\:dt\right)^{1/2}\\
&\leq & 2(\eps\nu )^{1/4}\left(\int_0^T\sum_k \left(|k_h|^2  +
\frac{\nu}{\eps}|A_k|^2 \right)|\hat \gamma_k|^2 e^{ -2t\left(|k_h|^2
+ \sqrt{\frac{\nu}{\eps}}\Re(A_k)\right) }\:dt\right)^{1/2}\\
&\leq & C (\eps\nu)^{1/4}\left( 1 + \sqrt{\frac{\nu}{\eps}}
\right)^{1/2}\left(\sum_{k}(1 + |k|)^3 |\hat \gamma_k|^2\right)^{1/2}.
\end{eqnarray*}
The right-hand side of the above inequality vanishes as $\eps,\nu\to
0$, and thus the error term satisfies the assumption of Proposition
\ref{energy1}.

Notice that the Dirichlet boundary condition at $z=0$ also generates a
resonant boundary layer term, namely
$$
u^{BL,0}=-\frac{1}{2\pi}\sum_{\mu\in\{-1,1\}}\sum_{k_3\in
2\bZ+1}\sum_{\substack{l_3\in \bZ,\\\sgn(l_3)\mu=1}} c_{(0,0,l_3)}(t)
e^{i\mu \frac{t}{\eps}-\nu k_3^2
t}\frac{(-1)^{\frac{k_3-1}{2}}}{k_3^2}M_{k_3}^\mu.
$$
We have clearly
$$
\|u^{BL,0} \|_{L^\infty([0,\infty), L^2(\om))}\leq C \|\gamma \|_{L^2(\om)}.
$$

$\bullet$ The term $v^{int,0}$ is given by \eqref{def:vint}, in which
$\delta^1_3=0$  and $\delta^0_3$ is defined in \eqref{def:d03}. As a
consequence, $v^{int,0}$ satisfies the estimate
\begin{eqnarray}
||v^{int,0}(t)||_{L^2(\omega)}&\leq& C\|
\delta^0_3(t)\|_{L^2(\omega_h)} (\eps \nu)^{1/2} \nonumber\\
&\leq & C \|\gamma \|_{H^2(\om)}(\eps \nu)^{1/2} .\label{est:vint0}
\end{eqnarray}

$\bullet$ At last, the term $\delta u^{int,0}$ is given by equation
\eqref{eq:delta_c_l}. As stated earlier, we choose a special solution
of \eqref{eq:delta_c_l} in order to keep track of the exponential
decay of $\delta^0_3$. Indeed, we have, for all
$k\in\bZ^3\setminus\{0\},$
$$
\hat \delta^0_3(-\lambda_k,k_h,t)=i\hat \gamma_k\exp\left(
-\left(|k_h|^2+\sqrt{\frac{\nu}{\eps}} A_k\right)t
\right)\sum_{\sigma\in\{-1,1\}}
\frac{n_\sigma(k)}{\lambda_k^\sigma}k_h\cdot w^{\lambda_k^\sigma}.
$$
Thus we choose for $\delta c_l$, $|l|\leq K$, the special solution
constructed in Remark \ref{rmk:spec_sol} in  Appendix C.
With this choice, we obtain
\begin{multline*}
 \| \delta u^{int,0}_A(t)\|_{H^2} \leq  \\C\eps^{1/4}\nu^{1/2}\left(
\sum_{k\in\bZ^3} \frac{(1+ |k_3|)^4}{\left|
\frac{1}{|k_3|^3}-\sqrt{\eps\nu}|\Im(A_k)| \right|^2} |\gamma_k|^2
\exp\left( -2 \sqrt{\frac{\nu}{\eps}} \Re(A_k)t\right) \right)^{1/2}.
\end{multline*}

Moreover, we recall (see Remark \ref{rem:estAk}) that there exists a
constant $C$ such that $|\Im(A_k)|\leq C$ for all $k$; and in the
sequel, we will choose $\gamma$ so that $\hat \gamma_k=0$ for $k_3$
large enough. In this case, we have
$$
\frac{1}{|k_3|^3}-\sqrt{\eps\nu}|\Im(A_k)|\geq \frac{1}{2|k_3|^3}
$$
for $\eps,\nu$ small enough and for all $k$ such that $\hat
\gamma_k\neq0$. The above estimate then becomes
\be
\| \delta u^{int,0}_A(t)\|_{H^2} \leq  C\eps^{1/4}\nu^{1/2}\left(
\sum_{k\in\bZ^3} (1+ |k_3|)^{10} |\gamma_k|^2 \exp\left( -2
\sqrt{\frac{\nu}{\eps}} \Re(A_k)t\right)
\right)^{1/2}\!\!.\label{est:delta_uint}
\ee

\subsection{Conclusion: proof of Theorem \ref{thm} when $\sigma=0$}

\label{par:concl_dirichlet}
The idea is to use the construction of the previous paragraphs in
order to compute an approximate solution of the evolution equation
\eqref{NS1}, which satisfies the boundary conditions up to
sufficiently small error terms. We now have to quantify the order of
approximation required on the boundary condition. This is done in
Lemma \ref{stop} in the Appendix, and thus we build interior and
boundary layer terms until the conditions of the Lemma \ref{stop} are
met.

Let us emphasize that equation \eqref{NS1} supplemented with
homogeneous boundary conditions at $z=0$ and $z=1$ is a contraction in
$L^2$. As a consequence, it is sufficient to prove the Theorem for
arbitrarily smooth initial data. Thus, without any loss of generality,
we assume from now on that the initial data $\gamma$ only has a finite
number of Fourier modes, that is
$$
\gamma= \sum_{|k_h|\leq N}\sum_{|k_3|\leq N'} \hat \gamma_k N_k.
$$

Let us now explain the construction in detail.

\noindent$\bullet$ First, we set
$$
u^0:=u^{int} + u^{BL,0},
$$
where $u^{int}$ and $u^{BL,0}$ have been defined in the previous
paragraphs. We have seen that $u^0$ is an approximate solution of the
evolution equation \eqref{NS1}, with error terms which are all $o(1)$
in $L^2$.
 We now evaluate the error on the boundary conditions. Indeed, setting
$\delta u:=u-u^0$,
we have proved that $\tilde u$ is an approximate solution of
\eqref{NS1}, with some boundary conditions $\eta^0$, $\eta^1$, namely
$$
\begin{array}{ll}
\delta u_{h|z=0}= \eta^0_h,\quad& \d_z\delta u_{h|z=1}=\eta^1_h,\\
\delta u_{3|z=0}= \eta^0_3,\quad& \delta u_{3|z=1}= \eta^1_3.
\end{array}
$$
Thus we have to estimate $\delta\gamma:=\delta u_{|t=0}$, together
with the terms $\eta^0$, $\eta^1$.

First, since $ u^{int}_{L|t=0}=\gamma$ and $u^{BL,0}_{res|t=0}=0$, we obtain
\be
\delta\gamma=-\bar u^{BL,0}_{|t=0}- v^{int,0}_{|t=0}- \delta u^{int,0}_{|t=0},
\ee
where $u^{BL,0}_{|t=0}$, $v^{int,0}_{|t=0}$ and $\delta
u^{int,0}_{|t=0}$   satisfy the estimates \eqref{est:ubl1},
\eqref{est:vint0}, and \eqref{est:delta_uint} respectively. Thus
$$
\| \delta \gamma \|_{L^2}\leq C \left(\|\gamma\|_{H^1} (\eps
\nu)^{1/4}+ \|\gamma\|_{H^2} (\eps \nu)^{1/2} +
\|\gamma\|_{H^5}\eps^{1/4}\nu^{1/2}\right) .
$$

Then, by construction of the operators $\mathcal U$ and $\mathcal B$,
the horizontal remainder boundary term at $z=1$ is exponentially
small: indeed, we have $\d_z u^{int}_{h|z=1}=0$, and consequently,
\be
\eta^1_h=-\sum_{\mu,k_h}\sum_{\sigma\in\{-1,1\}}\alpha^0_\sigma\frac{\lambda^\sigma}{\sqrt{\eps\nu}}e^{-\frac{\lambda^\sigma}{\sqrt{\eps\nu}}}w_{\lambda^\sigma}
e^{ik_h\cdot x_h} e^{i\mu\frac{t}{ \eps}}.
\ee
We infer that
\begin{eqnarray}\label{est:d1h}
\|\eta^1_h \|_{0}^2 &\leq & C \exp\left( -\frac{C}{N' \sqrt{\eps\nu}}
\right)\sum_{|k_h|\leq N} \sum_{|k_3|\leq N'} |\hat
\delta^0_h(-\lambda_k,k_h,t)|^2\\\nonumber
&\leq & C \|\delta^0_h \|_0^2\exp\left( -\frac{C}{N' \sqrt{\eps\nu}} \right) .
\end{eqnarray}

Similarly,
\be
\eta^1_3=\sum_{\mu,k_h}\sum_{\sigma\in\{-1,1\}}\alpha^0_\sigma\frac{\sqrt{\eps\nu}}{\lambda^\sigma}i
k_h\cdot w_{\lambda^\sigma}e^{-\frac{\lambda^\sigma}{\sqrt{\eps\nu}}}e^{ik_h\cdot
x_h} e^{i\mu\frac{t}{ \eps}},
\ee
and thus
\be\label{est:d13}
 \|\eta^1_3 \|_0\leq C N'\sqrt{\eps\nu} \exp\left(
-\frac{C}{N'\sqrt{\eps\nu}} \right)\|\delta^0_h\|_{2}.
\ee

The treatment of the vertical boundary condition at $z=0$ is easier.
Indeed, since $\delta^1=0$, we have $\eta^0_3=0$, because
\be \label{est:d03}
\eta^0_3=-\sum_{\mu,k_h}\sum_{\sigma\in\{-1,1\}}\alpha^1_\sigma\frac{\eps\nu}{(\lambda^\sigma)^2}i
k_h\cdot w_{\lambda^\sigma}e^{-\frac{\lambda^\sigma}{\sqrt{\eps\nu}}}e^{ik_h\cdot
x_h} e^{i\mu\frac{t}{ \eps}}=0.
\ee
There remains to compute $\eta_h^0$; because of the terms $\delta
u_K^{int,0}$ and $v^{int,0}$, $\eta_h^0$ is the largest term of all.
Precisely, we have
\begin{eqnarray}
\eta^0_h(t)&=&-\left[ v^{int,0}_{h|z=0}(t) + \delta
u^{int,0}_{K,h|z=0}(t) \right]\\
\nonumber&=&-\sqrt{\eps\nu}\sum_{\mu, k_h\neq 0}ik_h\cdot \frac{\hat
\delta^0_3(\mu,k_h,t)}{|k_h|^2}e^{i k_h\cdot x_h} e^{i\mu
\frac{t}{\eps}}\\
&&- \sum_{k_h}\sum_{|k_3|\leq K}\delta c_k(t)e^{i k_h\cdot x_h}
e^{-i\lambda_k\frac{t}{\eps}}n_h(k),
\end{eqnarray}
and thus there exists a constant $c>0$ such that for all $t\geq 0$
\begin{eqnarray*}
 \| \eta^0_h (t)\|_{L^2}&\leq& C\left(\sqrt{\eps\nu} \| \delta^0_3(t)
\|_{0}+ \| \delta u_K^{int,0}(t)\|_{H^1}\right)\\
&\leq & C\eps^{1/4}\nu^{1/2} \|\gamma\|_{H^6}\exp\left(
-c\sqrt{\frac{\nu}{\eps}}t \right).
\end{eqnarray*}

Now, the remaining boundary terms $\eta^1_h,\eta^1_3, \eta^0_3$ are
all of order $o(\eps)$ according to \eqref{est:d1h}-\eqref{est:d03}.
Notice furthermore that by construction,
$$
\int_{\omega_h} \eta^j_3=0\quad \text{for }j=0,1.
$$
Consequently, $\eta^1_h,\eta^1_3, \eta^0_3$ all match the conditions
of the stopping Lemma \ref{stop}.

\noindent$\bullet$ We now have to continue the construction with the
``bad'' part of the remaining boundary conditions, i.e. $\eta^0_h$.
Let us define the boundary layer term
$$
\delta u^{BL,0}:=\mathcal B(\eta^0_h, 0).
$$
According to \eqref{B-cont},
$$
\| \delta u^{BL,0}\|_{L^\infty([0,\infty),L^2(\om))}\leq C
(\eps\nu)^{1/4}\| \eta^0_h\|_0 \leq C \eps^{1/2}\nu^{3/4} \|
\gamma\|_{H^6},
$$
and $\delta u^{BL,0}$ is an approximate solution of equation
\eqref{NS1} with a $o(1)$ error term. Moreover, notice that for all
$t\geq 0$, for all $s\geq 0$,
$$
\| \delta u^{BL,0}_{3|z=0}(t)\|_{H^s(\omega_h)}\leq C \eps^{3/4}\nu \|
\gamma\|_{H^7}\exp\left( -c\sqrt{\frac{\nu}{\eps}}t \right) .
$$
We deduce that for all $T>0$, for all $s\geq 0$
$$
\| \delta u^{BL,0}_{3|z=0}\|_{L^2((0,T), H^s(\om_h))}\leq C
\eps^{3/4}\nu \| \gamma\|_{H^7}\left( \frac{\eps}{\nu}
\right)^{1/4}=o(\eps).$$
Thus $\delta u^{BL,0}_{3|z=0}$ satisfies the hypotheses of Lemma
\ref{stop}. Additionnally, $ \delta u^{BL,0}_{|z=1}$ is exponentially
small, and thus also satisfies the conditions of Lemma \ref{stop}.

\noindent$\bullet$ We now define the approximate solution $u_{app}$ by
$$
u_{app}:= u^{int} + u^{BL,0}  + \delta u^{BL,0} + w,
$$
where $w$ is defined by Lemma \ref{stop} with the remaining boundary
conditions. By construction, $u_{app}$ is an approximate solution of
the evolution equation \eqref{NS1}, with
$$
u_{app|t=0}= u_{|t=0} + o(1),
$$
and $u_{app}$ satisfies homogeneous boundary conditions at $z=0$ and
$z=1$. By a simple energy estimate analogous to that of Proposition
\ref{energy1}, we deduce that
$$
\|u - u_{app} \|_{L^{\infty}((0,T),L^2)}\to 0\quad \forall T>0.
$$
Since all the terms in $u_{app}$ except $ u^{int}_L$ and $u^{sing,0}$
are $o(1)$ in $L^2$ norm, Theorem \ref{thm} is proved.

\begin{Rmk}
The proof of Theorem \ref{thm} for $ \sigma=0$ is valid for all ranges
of $\eps,\nu$ such that $\eps,\nu\to 0$. In particular, we do not
assume that $\nu=O(\eps)$. However, in the case $\nu\gg\eps$, all the
modes such that $k_h\neq 0$ in $ u^{int}_L$ are of order
$\exp(-c\sqrt{\nu/\eps}t)$, and vanish exponentially for all $t>0$.
Thus the effect of the heterogeneous horizontal modes of the initial
data  vanishes outside an initial layer of size $\sqrt{\eps/\nu}$. On
the other hand, the modes corresponding to $k_h=0$ are not damped, and
give rise to resonant boundary layer term $u^{BL,0}_{res}$.
Eventually, for $t\gg\sqrt{\eps/\nu}$, we have
$$
u(t)\approx \sum_{k_3\in\bZ^*}\hat \gamma_{(0,0,k_3)}N_{(0,0,k_3)} +
u^{BL,0}_{res}.
$$

\end{Rmk}

\section{Towards more realistic models}\label{conclusion}

\subsection{Justification of equation \eqref{NS1} for geophysical models}
\label{oceanic}

We now explain how our results may give some insight on  models of wind-driven oceanic circulation, which we recall below. In general, these equations are too difficult to deal with in complete mathematical generality, and thus crude assumptions are necessary in order to focus on some special phenomena. Since our aim in this paper is to describe  particular kinds of boundary layers occurring at the top and the bottom of rotating fluids, we give  in this regard a few elements on the derivation of the system \eqref{NS1} supplemented with \eqref{top}-\eqref{bottom}. We emphasize that this derivation is  rigourous neither physically (since a number of important physical phenomena will be neglected in the process), nor mathematically. 
Our sole purpose is to present some motivations for the study of equation \eqref{NS0}, and more generally to derive mathematical tools wich may be useful in models of physical oceanography.
\medskip

$\bullet$ As a starting point, we recall that the ocean can be considered as an incompressible fluid with variable density $ \rho$; hence, neglecting in a first approximation the temperature and salinity variations, the velocity $u$ of the oceanic currents satisfies the Navier-Stokes equations, with a Coriolis term accounting for the rotation of the Earth
\begin{equation}
\label{NS-ocean}
\begin{aligned}
\d_t \rho + u\cdot \nabla \rho=0,\\
\rho \left[\d_t u+(u\cdot \nabla )u \right]+\nabla p =\cF +  \rho u\wedge\Omega \,,\\
\nabla \cdot u =0\,,
\end{aligned}
\end{equation}
where $\cF$  denotes as in the first section the  frictional
force acting on the fluid, $\Omega$ is the (vertical component of the) Earth rotation vector, and $p$ is the pressure defined as the Lagrange multiplier associated with the incompressibility constraint.

We  assume that  the movement to be studied occurs at 
midlatitudes. At such latitudes,   we can {\bf neglect the variations of the Coriolis parameter} $\Omega$ and use the
 $f$-plane
approximation, which makes the  analysis much simpler 
than in the
case of the full model. 

The observed persistence over several days of large-scale waves  in the oceans shows that {\bf frictional forces} $\cF$ are weak, almost  everywhere, when
compared with the Coriolis acceleration and the pressure gradient, but large when compared with the kinematic viscous dissipation of water. 
One common but not very precise notion is that  small-scale
motions, which appear sporadic or on longer time scales, act to  smooth and mix
properties on the larger scales by processes analogous to molecular,  diffusive
transports.
For the present purposes it is only necessary to note that one way to  estimate
the dissipative influence of smaller-scale motions is to retain the same
representation of the frictional force 
$$\cF= A_h \Delta_h u+A_z \d_{zz} u$$
where $A_z$ and $A_h$ are respectively the vertical and horizontal turbulent viscosities, of much larger magnitude than the
molecular value, supposedly because of the greater efficiency of  momentum
transport by macroscopic chunks of fluid. 
Notice that $A_z\neq A_h$  is therefore natural in geophysical framework (see \cite{pedlovsky}). Moreover, models of oceanic circulation usually assume that the vertical viscosity $A_z$ is not constant (see \cite{BD,PP}); we will come back on this point later on.

\bigskip\noindent
$\bullet$
Let us now describe the boundary conditions associated with \eqref{NS-ocean}: typically, Dirichlet boundary conditions are enforced at the bottom of the ocean and on the lateral boundaries of the horizontal domain $\omega_h$ (the coasts), i.e.
\be
\begin{aligned}
u_{|z=h_B(x_h)}=0 \quad\text{(bottom)},\\
u_{|x\in \d \omega_h}=0\quad\text{(coasts).}
\end{aligned}
\ee
In equation \eqref{NS0}, we have  {\bf neglected the  effects of the lateral boundary conditions} by considering the case when $\omega_h$ is the two-dimensional torus.  Of course such an assumption is not physically relevant. It is well known for instance that the lateral boundary layers, called Munk layers,  play a crucial role in the oceanic circulation, in particular in the western intensification of  currents. Moreover, for the sake of simplicity, we did not take into account  the {\bf topography of the bottom} in \eqref{bottom}. The topographic effects described by the function $h_B$ should actually modify the Ekman boundary layer and consequently the limit equations, even if the variations of the bottom are small  (see \cite{DG} and \cite{dgv} for instance).

We assume that the upper surface, which we denote by $\Gamma_s$, has an equation of the type $z=h_S(t,x_h)$.
As boundary conditions on $\Gamma_s$, we enforce (see \cite{GP})
\begin{equation}
\label{top-bis}\begin{aligned}
\Sigma\cdot n_{\Gamma_s}=\sigma_w,\\
\frac{\d}{\d t}\mathbf 1_{0\leq z \leq  h_S(t,x)} + \mathrm{div}_x(\mathbf 1_{0\leq z \leq  h_S(t,x_h)} u)=0\end{aligned}
\end{equation}
where $\Sigma$ is the total stress tensor of the fluid, and $\sigma_w$ is a given stress tensor describing the {\bf wind on the surface of the ocean}. In general, $\Gamma_s$ is a free surface, and a moving interface between air and water, which has its own self consistent motion. In \eqref{top}, we have assumed that
$$
h_S(t,x_h)\equiv D,
$$
where $D$ is the typical depth of the ocean. Hence  (\ref{top}) is a rigid lid approximation, which is a drastic, but standard simplification.
The justification of (\ref{top}) starting from a free surface is mainly open from a mathematical point of view;  we refer to \cite{AP} for the derivation of Navier-type wall laws for the Laplace equation, under general assumptions on the interface, and to \cite{LT} for some elements of justification in the case of the great lake equations.  Nevertheless, from a physical point of view, the simplification does not appear so dramatic, since in any case the free surface is so turbulent with waves and foam, that only modelization is tractable and meaningful. Condition (\ref{top}) is a simple modelization which already catches most of the physical phenomena (see \cite{pedlovsky}).

\noindent $\bullet$ Let us now evaluate the order of magnitude of the different parameters occuring in \eqref{NS-ocean}, and write the equations in a nondimensionalized form. First, since the variations of density are of order $10^{-3}$, we neglect the effects of the variations of $\rho$ in \eqref{NS-ocean} and we assume that
$$
\rho\equiv \rho_0= 10^3\ \mathrm{kg\cdot m^{-3}}.
$$
Moreover, we set
$$\begin{aligned}
   u_h= U u_h',\quad
u_3=W u_3',\\
x_h=H x_h',\quad
z=Dz',\\
  \end{aligned}
$$
where $U$ (resp. $W$) is the typical value of the horizontal (resp. vertical) velocity, $H$ is the horizontal length scale, and $D$ the depth of the ocean. In order that $u'(x')$ remains  divergence-free, we choose
$$
W= \frac{U D}{H}.$$
Typical values for  the mesoscale eddies that have been observed in western Atlantic (see for instance \cite{pedlovsky}) are
$$
U\sim  1 \ \mathrm{cm\cdot s^{-1}} ,\quad H\sim 100\ \mathrm{km}, \quad\text{and } D\sim 4\  \mathrm{km}.
$$
With these values, we get
$$
\eps:= {U\over H \Omega }\sim  10^{-3} ,
$$
and hence $\eps\ll 1$ (notice that the parameter $\eps$ is dimensionless). Thus the asymptotic of fast rotation (small Rossby number) is valid.

\vskip1mm

A typical value of the horizontal turbulent velocity is $A_h\sim 10^6 \, \mathrm{kg\cdot m^{-1} \cdot s^{-1}}$ (see \cite{BD}), which yields
$$
{A_h\over \rho_0 U H} \sim 1.
$$
In general,  the vertical eddy viscosity $A_z$ is not assumed to be constant; in \cite{BD,PP}, the authors consider a vertical viscosity which takes the form 
$$
A_z=\rho_0\left( \nu_b + \nu_0\left( 1 - \frac{5 g \d_z \rho}{\rho_0|\d_z u_h|^2}  \right)^{-2} \right)
$$
and they assume in their numerical computations that $A_z \geq 1  \ \mathrm{kg \cdot m^{-1} \cdot s^{-1}}$. The quantify 
\be
Ri:=- (g \d_z \rho)/(\rho_0|\d_z u_h|^2)\label{Richardson}
\ee is called the local Richardson number. Equation \eqref{NS1} corresponds to a constant approximation for the viscosity $A_z$; this is largely inaccurate, since according to \cite{PP}, measurements show that the value of $A_z$ is usually large  inside the boundary layer (say, $3$ to $10 \ \mathrm{kg\cdot m^{-1} \cdot s^{-1}}$), but substantially smaller  in the interior (under the thermocline). However, since we are primarily interested in the boundary layer behaviour, we only retain the typical boundary layer value $A_z\sim 5 \ \mathrm{kg\cdot m^{-1} \cdot s^{-1}}$, which yields
$$
\nu:={H A_z\over \rho_0 U D^2}\sim  5\cdot 10^{-3}.
$$
Hence we also have $\nu\ll 1$, which justifies our assumption of vanishing vertical viscosity. Notice that the parameter $\nu$ is also dimensionless.

Thus the nondimensionalized system (see for instance \cite{pedlovsky,gill}) becomes
\be\label{NS}
\begin{aligned}
\d_t u' + u'\cdot \nabla u' + \frac{1}{\eps} e_3\wedge u' + \begin{pmatrix} \nabla_h p' \\ \frac{1}{\delta^2}\d_z p'
                                                        \end{pmatrix}
-\Delta_h u' -\nu \d_{zz} u'=0,\\
\nabla\cdot u'=0,
\end{aligned}
\ee
where $\delta:=D/H$ is the aspect ratio. The boundary conditions are \eqref{top}, \eqref{bottom}, with
$$
\beta:= \frac{|\Sigma| D }{A_z U}.
$$
The equation for the boundary layers at $z=1$ and $z=0$ in the above system is exactly the same as in \eqref{NS1}. Thus, we believe that the phenomena we have highlighted (atypical size of boundary layers for resonant forcing, possible destabilization of the fluid for large times) may prove to be useful when studying models of oceanic circulation. 
However, we do not claim that our results truly apply as such to realistic geophysical models, since, as mentionned above, a series of drastic simplifications have been made. Furthermore, some assumptions of Theorem \ref{thm}, such as \eqref{hyp:scaling}, are purely technical, and do not have any physical ground.
Thus, we now turn  to some possible mathematical extensions of Theorem \ref{thm} to more realistic models.

\subsection{Possible extensions}

The previous study allows to characterize the linear response  of a rotating incompressible fluid to some surface stress, which admits fast time oscillations and
may be resonant with the Coriolis force.  In addition to the usual
Ekman layer, we have exhibited another - much larger -  boundary
layer, and  a resonant boundary layer term, the size of which depends on time.
Note that these effects do not modify the mean motion (i.e. the $L^2$
asymptotics) when considering moderate times, say for instance $t\ll
\frac1\nu$.

$\bullet$ {\bf Extensions to nonlinear equations.}
In order to take into account more physics in our model, the first
point is to understand the nonlinear response of the fluid to the
same surface stress. In other words, we are interested in the
asymptotic behaviour of the full Navier-Stokes-Coriolis equation \eqref{NS}-\eqref{top}-\eqref{bottom} including in particular the nonlinear
contribution of the convection.

In the case of a non-resonant forcing, the asymptotic motion of the
fluid  is obtained by some filtering method~: there is indeed two
time scales, a rapid time scale at which  the fluid oscillates
according to the modes of the linear penalization, and a slow one
which characterizes the nonlinear evolution of the wave enveloppes.
The boundary effects do not play any role in the nonlinear process
since they are localized in the vicinity of the surface. They
contribute to  the envelope equations only by the Ekman pumping.
In the case of a resonant forcing,  the boundary effects - which are
not expected to be localized in the same way -  could play a
different role.

$\bullet$ {\bf Towards more physically relevant models.}
The present theory of the wind-driven circulation of a fluid of
uniform density is  actually inadequate to capture the velocity
structure of the oceans. We indeed expect the wind forcing to modify
in depth the circulation. The profile arising from the resonant part
of the forcing and the Ekman pumping are not enough to get a relevant
description of that vertical structure.

We will mention here many phenomena that have been neglected in our
study and which seem to be crucial to obtain realistic models.

\begin{enumerate}[\bf(i)]
\item we first need to consider the {\bf variations of the Coriolis
parameter}, keeping at least the $\beta$-plane approximation~:
$$\Omega =f+\beta y$$
where $y$ is the coordinate measuring the latitude. Such a spatial
dependence of $\Omega$ is necessary to derive Sverdrup's theory of
horizontal transport, which is still one of the foundations of all
theories of the ocean circulation (see \cite{pelovsky2} for instance).

 From a mathematical point of view, we refer to \cite{DG}\cite{GSR2}
and references therein for some preliminary studies on inhomogeneous
rotating fluids.

\item the vertical structure of the ocean cicrulation is also related
to the variations of the density $\rho$, the so-called {\bf
stratification} of the oceans. The theoretical works of Rhines and
Young \cite{RY} have brought some understanding about geostrophic
contours, potential vorticity homogeneization and their role in
shaping the pattern of circulation.  Luyten, Pedlosky and Stommel
\cite {LPS} have then developped a theory for the full density and
velocity structure of the wind-driven circulation by going beyond the
quasi-geostrophic approximation to consider the important effect of
the ventilation of the thermocline which occurs as oceanic density
surfaces rise to intersect the oceanic mixed layer.

However, to our knowledge, there is no mathematical contribution on
that topics, the first difficulty being to determine some suitable
functional framework to deal with the inhomogeneous incompressible
Navier-Stokes equations. Moreover, the behaviour of the fluid is expected to depend in a crucial way on the order of magnitude of the Richardson number $Ri$, defined in \eqref{Richardson} above: when $Ri $ is small (say, $Ri<1/4$), instabilities may develop, leading in turn to some turbulent mixing across layers of equal density. We refer to \cite{Turner} for more details.

\item  we finally have to take into account the bottom topography
which may have an important contribution to the mean circulation as
proved for instance in \cite{DG} or \cite{dgv}.

\end{enumerate}

The crucial point  to understand these features from a mathematical
point of view is to get a  description of the boundary layer operator
which is not based on the Fourier transform, but on the spectral
decomposition of the Coriolis operator. The Coriolis penalization
becomes indeed in the two first cases a skew-symmetric operator with
non-constant coefficients (depending on $\Omega$ and $\rho$). We
therefore have to develop new tools to obtain the asymptotic
expansions in a more abstract and systematic way.

\section*{Appendix A: spectral results on the Coriolis operator}

For the sake of completeness, we recall here - essentially without proof -  some fundamental properties of the Coriolis operator leading to \eqref{basis}.
For a detailed study of these spectral properties we refer for instance to \cite{CDGG}.

Extending any $u\in V_0 $ on $[-1,1]\times \bT^2$ as follows
\begin{equation}
\label{extension}
u _h (x_h,z) =u _h(x_h,-z) \quad \hbox{ and }u _3 (x_h,z) =-u _3(x_h,-z) 
\end{equation}
(which is compatible with the incompressibility constraint $\nabla\cdot u=0$) we obtain a periodic function, so that it is possible to use  some Fourier decomposition.

Setting
\be\label{base1}\left\{\begin{array}{l}
 \ds n_1(k)=\frac{1}{2\pi|k_h|}(ik_2+k_1\lambda_k)\\
 \ds n_2(k)=\frac{1}{2\pi|k_h|}(-ik_1+k_2\lambda_k)\\
 \ds n_3(k)=i\frac{|k_h|}{2\pi\sqrt{|k_h|^2 + (\pi k_3)^2}}
 \end{array}\right.
\quad\text{if }k_h\neq 0, \ee
and
\be\label{base2}
\left\{\begin{array}{l}
 \ds n_1(k)=\frac{\sgn(k_3)}{2\pi}\\
 \ds n_2(k)=\frac{i}{2\pi}\\
\ds n_3(k)=0 \end{array}\right.\text{ else},
\ee
what can be proved actually  is that the family $(N_k)$ defined by
$$ N_k =\exp (ik_h\cdot x_h)\begin{pmatrix} n_1(k)\cos(\pi k_3 z)\\
 n_2(k)\cos (\pi k_3 z)\\ n_3(k)\sin(\pi k_3 z)\end{pmatrix}$$
 is an hilbertian basis of $V_0$
 constituted of eigenvectors of the linear penalization, satisfying \eqref{basis}.

\section*{Appendix B: the stopping condition}

We have postponed here the statement and the proof of the stopping condition  since it is just a technical result (based on straightforward
computations) which is used in several places (Sections 4 and 5).

\begin{Lemma}[Stopping condition]\label{stop}
Let $\delta^0,\delta^1\in L^\infty(\bR^+, H^3(\omega_h))$ be two families  such that $$\int (\delta^1_3-\delta^0_3) dx_h=0$$
and
$$\frac1\eps \| \delta^i\| _{H^1(\omega_h)} \to 0,\quad  \| \delta^i\| _{H^3(\omega_h)} \to 0\hbox{ and } \|\d _t  \delta^i \| _{H^1(\omega_h)} \to 0\hbox{ as } \eps \to 0$$
Then there exists a family $w \in L^\infty(\bR^+, L^2(\Omega))$ with $\nabla\cdot w=0$ such that
$$
w_{|z=0} =\delta^0, \quad w_{3|z=1} =\delta^1_3 \hbox{ and } \d_z w_{h|z=1} =\delta^1_h
$$
and satisfying the following estimates
$$
\| w \|_{L^2(\Omega)} \to 0\hbox{ and }
\left\| \d_t w + \frac1\eps Lw -\nu\d_{zz} w-\Delta_h w \right\| _{L^2(\Omega)} \to 0\hbox{ as } \eps \to 0.
$$
\end{Lemma}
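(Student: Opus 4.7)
The idea is to construct $w$ as an explicit lifting of the boundary data, working mode by mode in the horizontal Fourier variable, and then to check each required estimate by elementary computations that rely on the smallness hypotheses. I would write
$$w(t,x_h,z)=\sum_{k_h\in\bZ^2}\hat w(t,k_h,z)\,e^{ik_h\cdot x_h},$$
so that the incompressibility constraint reads $ik_h\cdot \hat w_h+\partial_z\hat w_3=0$, and the four boundary conditions become four scalar conditions on $\hat w_h,\hat w_3$ at $z=0,1$ for each $k_h$.

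For $k_h\neq 0$, split $\hat w_h=\frac{k_h}{|k_h|^2}(k_h\cdot\hat w_h)+\pi^\perp_{k_h}\hat w_h$. The divergence constraint only involves the parallel part: $k_h\cdot\hat w_h=i\partial_z\hat w_3$. I would therefore first choose $\hat w_3(t,k_h,\cdot)$ as a cubic polynomial in $z$ satisfying the four conditions
$$\hat w_3(0)=\hat\delta^0_3,\ \hat w_3(1)=\hat\delta^1_3,\ \partial_z\hat w_3(0)=-ik_h\cdot\hat\delta^0_h,\ \partial^2_z\hat w_3(1)=-ik_h\cdot\hat\delta^1_h,$$
and then take $\pi^\perp_{k_h}\hat w_h(t,k_h,\cdot)$ as a quadratic polynomial in $z$ matching $\pi^\perp_{k_h}\hat\delta^0_h$ at $z=0$ and $\pi^\perp_{k_h}\hat\delta^1_h$ for $\partial_z$ at $z=1$. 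For $k_h=0$, incompressibility forces $\hat w_3(t,0,\cdot)$ to be constant in $z$, so the two prescribed values $\hat\delta^0_3(t,0)$ and $\hat\delta^1_3(t,0)$ must coincide, which is exactly the compatibility assumption $\int_{\omega_h}(\delta^1_3-\delta^0_3)\,dx_h=0$. The horizontal component $\hat w_h(t,0,z)$ is then interpolated by an affine function of $z$.

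Because the polynomial coefficients depend linearly on $(\hat\delta^0_h,\hat\delta^1_h,\hat\delta^0_3,\hat\delta^1_3)$ with prefactors bounded by powers of $|k_h|$, Parseval and a direct term-by-term bound give
$$\|w\|_{L^2(\Omega)}+\|\partial_{zz}w\|_{L^2}+\|Lw\|_{L^2}\leq C(\|\delta^0\|_{H^1}+\|\delta^1\|_{H^1}),$$
while $\|\Delta_h w\|_{L^2}\leq C(\|\delta^0\|_{H^3}+\|\delta^1\|_{H^3})$ and $\|\partial_t w\|_{L^2}\leq C(\|\partial_t\delta^0\|_{H^1}+\|\partial_t\delta^1\|_{H^1})$. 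Under the assumptions of the lemma each of these quantities, and also $\eps^{-1}\|Lw\|_{L^2}$, tends to zero as $\eps\to 0$. Combining these estimates yields both $\|w\|_{L^2}\to 0$ and the smallness of the residual $\partial_t w+\eps^{-1}Lw-\nu\partial_{zz}w-\Delta_h w$ in $L^2$.

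The only delicate point is the control of the parallel part of $\hat w_h$ for $k_h$ near the origin of $\bZ^2$: the formula $\pi_{k_h}\hat w_h=\frac{ik_h}{|k_h|^2}\partial_z\hat w_3$ contains a factor $1/|k_h|^2$, while $\partial_z\hat w_3$ produces terms of order $|k_h|\hat\delta^j_h$ (from the interpolation conditions on $\partial_z\hat w_3$ at $z=0$ and $\partial^2_z\hat w_3$ at $z=1$) and of order $\hat\delta^j_3$. One power of $|k_h|$ cancels, and the remaining factor $1/|k_h|$ is absorbed by the lower bound $|k_h|\geq 1$ on $\bZ^2\setminus\{0\}$. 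This loss of one horizontal derivative is exactly what explains the regularity gap between the $H^1$-type hypotheses used to bound $\|w\|_{L^2}$ and the $H^3$-hypothesis used to bound $\|\Delta_h w\|_{L^2}$, and it is this bookkeeping — rather than any deep argument — that forms the main work of the proof.
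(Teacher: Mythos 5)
Your proposal is correct and follows essentially the same strategy as the paper's proof: an explicit polynomial lifting in $z$ of the boundary data, with the divergence-free constraint coupling the horizontal and vertical components, the zero-mean compatibility condition $\int(\delta^1_3-\delta^0_3)\,dx_h=0$ closing the construction, and the $H^1/H^3$ regularity gap accounting respectively for the penalization $\frac{1}{\eps}Lw$ and the horizontal Laplacian $\Delta_h w$. The only (cosmetic) difference is the parameterization: the paper fixes $w_h=\delta^0_h+\delta^1_h z+\nabla_h\phi\,z(1-z)^2$ and derives $w_3$ by integrating the divergence, solving a Poisson problem on $\omega_h$ for the potential $\phi$, whereas you work mode by mode in $k_h$, fix $\hat w_3$ as a cubic in $z$ and then recover $\pi_{k_h}\hat w_h$ from $\partial_z\hat w_3$ — in Fourier these are literally the same operation, the division by $|k_h|^2$ being the algebraic shadow of the elliptic problem for $\phi$.
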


\begin{proof}
Here we have to build a family $w \in L^\infty(\bR^+, L^2(\Omega))$
with $\nabla\cdot w=0$ such that
$$
w_{|z=0} =\delta^0, \quad w_{3|z=1} =\delta^1_3 \hbox{ and } \d_z w_
{h|z=1} =\delta^1_h.
$$
Of course it is not uniquely defined. We just want to obtain one such
family satisfying further suitable continuity estimates.

Given any two-dimensional vector field $w_h$, we get a divergence-
free vector field by setting
$$w_3(x_h,z) =w_3(x_h,0) -\int_0^z (\d_1 w_1+\d_2 w_2)(x_h,z') dz'.$$
In order that the boundary conditions on $w_3$ are satisfied, the
only condition on $w_h$ is therefore
$$\int_0^1 (\d_1 w_1+\d_2 w_2)(x_h,z') dz' +\delta^1_3(x_h) -\delta^0_3(x_h)=0\,.$$

We therefore choose
$$\begin{aligned}
w_1(x_h,z) =\delta^0_1 (x_h)+ \delta^1_1(x_h) z +\d_1 \phi (x_h)
z(1-z)^2,\\
w_2(x_h,z) =\delta^0_2 (x_h)+ \delta^1_2 (x_h)z +\d_2 \phi (x_h)
z(1-z)^2 ,
\end{aligned}
$$
with
$$
\nabla_h\cdot \delta^0_h +\frac12 \nabla_h\cdot \delta^1_h+
\frac1{12} \Delta_h \phi+\delta^1_3 -\delta^0_3=0\,.$$

\bigskip
Standard elliptic estimates give for any $s\geq 0$
$$\| \phi \|_{H^{s+1}(\omega_h)} \leq  C(\| \delta^0\|_{H^s(\omega_h)}
+\| \delta^1\|_{H^s(\omega_h)}).$$
Therefore
$$ \| w\|_{H^2(\Omega)} \leq  C(\| \delta^0\|_{H^3(\omega_h)}+\|
\delta^1\|_{H^3(\omega_h)})$$
so that, using the assumptions on $\delta^0,\delta^1$,
$$\|w\|_{H^2(\Omega)} \to 0 \hbox{ as }\eps \to 0.$$

\bigskip
Furthermore, since $w$ is given in terms of $\delta^0, \delta^1$ by
linear relations with constant coefficients,
$$ \| \d_t w\|_{L^2(\Omega)} \leq  C(\| \d_t \delta^0\|_{H^1
(\omega_h)}+\| \d_t \delta^1\|_{H^1(\omega_h)}).$$
We conclude, using again the assumptions on $\delta^0,\delta^1$  that
$$
\left\| \d_t w + \frac1\eps Lw -\nu\d_{zz} w-\Delta_h w \right\| _{L^2
(\Omega)} \to 0\hbox{ as } \eps \to 0.
$$
\end{proof}

\section*{Appendix C: the small divisor estimate}

We recall here the by-now standard arguments used to obtain some  estimate for the solution to  fast-oscillating linear equation with non-resonant source terms~:
\begin{equation}
\label{source}
\d_t w+\frac1\eps \bP(w) -\nu \Delta_h w-\nu \d^2_{zz} w= \Sigma
\end{equation}
where the horizontal Fourier mode $l_h$ is fixed and 
$$\Sigma (t)=e^{il_h\cdot x_h}\sum_{\mu} \sum_{k_3\in \bZ\atop \mu\neq -\lambda_k} s(\mu,k,t)e^{i\mu \frac t\eps}N_k. $$
We further assume that the frequencies $\mu$ belong either to $\{-\lambda_l, l_3\in \bZ^3\}$, or to some finite set $M$.

The small divisor estimate is the following:
\begin{Lemma}\label{smalldivisor}
Let $w$ be the solution of (\ref{source}), i.e. for all $l=(l_h,l_3)$ with $l_3\in \bZ$,
$$ \d_t w_l + (|l_h|^2+\nu' |l_3|^2)w_l=\sum_{\mu\neq -\lambda_l}  s(\mu,l,t)e^{i(\mu+\lambda_l) \frac t\eps}$$
where $\nu'=\pi^2 \nu.$
 
Then  there exists a constant $C$ such that for all $t>0$, $r>0$, for all $K>0$, we have
\begin{eqnarray*}
\| \bP_Kw(t) \|_{H^r(\om)}&\leq& C \eps \left\{\| s_{|t=0}\|_{r,K}\exp\left(-(|l_h|^2 + \nu' l_3^2)t\right) +\|s(t)\|_{r,K}     \right\}\\
&+&C\eps\int_0^t \|\d_u s(u) \|_{r,K}\exp\left(-(|l_h|^2 + \nu' l_3^2)(t-u)\right)\:du \\&+&C\eps \sup_{u\in[0,t]} \|s(u) \|_{r,K}\\
&+& \| \bP_Kw_{|t=0} \|_{H^r(\om)},
\end{eqnarray*}
where the norm $\| \cdot \|_{r,K}$ is defined by
\begin{eqnarray*}
\| s(t)\|_{r,K}^2&:=&\sum_{|l|\leq K}\sum_{k_3\in \bZ \atop k_3\neq l_3}|k_3|^8|l|^{2r} |s(-\lambda_k,l,t)|^2\\
&+&\sum_{|l|\leq K}\sum_{\mu\in M\atop \mu\neq -\lambda_l}\left( 1 + \mathbf 1_{|\mu|=1}|l|^4 \right)|l|^{2r}|s(\mu,l,t)|^2.
\end{eqnarray*}

\label{lem:delta_uint}

\end{Lemma}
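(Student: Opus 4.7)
The natural approach is Duhamel's formula applied mode by mode, followed by an integration by parts in time to exploit the non-resonance condition $\mu+\lambda_l\neq 0$. For each $l$ with $|l|\leq K$, the given ODE integrates to
$$w_l(t)=e^{-\alpha_l t}w_l(0)+\sum_{\mu\neq-\lambda_l}\int_0^t e^{-\alpha_l(t-u)}\,s(\mu,l,u)\,e^{i(\mu+\lambda_l)u/\eps}\,du,\qquad \alpha_l:=|l_h|^2+\nu'|l_3|^2.$$
Writing $e^{i(\mu+\lambda_l)u/\eps}=(\eps/(i(\mu+\lambda_l)))\,\d_u e^{i(\mu+\lambda_l)u/\eps}$ and integrating by parts, each summand splits into a boundary term at $u=t$, a boundary term at $u=0$ carrying the damping $e^{-\alpha_l t}$, and an interior integral involving $\alpha_l\,s+\d_u s$; all three pieces carry the prefactor $\eps/|\mu+\lambda_l|$. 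The $\alpha_l s$ contribution is absorbed by $\int_0^t \alpha_l e^{-\alpha_l(t-u)}\,du\leq 1$, which explains the appearance of $\sup_{u\in[0,t]}\|s(u)\|_{r,K}$ in the statement.

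The heart of the matter is to bound $1/|\mu+\lambda_l|$ and to recognise that the resulting factor is precisely the weight built into $\|\cdot\|_{r,K}$. For $\mu\in M$ with $|\mu|\neq 1$ the quantity $|\mu+\lambda_l|$ is bounded below by a positive constant since $|\lambda_l|\leq 1$; this accounts for the unweighted contribution of $M$-frequencies. For $\mu\in M$ with $|\mu|=1$ one uses
$$|1\pm\lambda_l|=\frac{\bigl|\sqrt{|l_h|^2+\pi^2 l_3^2}\mp\pi l_3\bigr|}{\sqrt{|l_h|^2+\pi^2 l_3^2}}\gtrsim \frac{|l_h|^2}{|l|^2},$$
so that $1/|\mu+\lambda_l|\lesssim |l|^2$, exactly matching the extra $|l|^4$ weight in the second sum defining $\|s\|_{r,K}$. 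For $\mu=-\lambda_k$ with $k_3\neq l_3$ one exploits the algebraic identity
$$\lambda_l^2-\lambda_k^2=\frac{\pi^2|l_h|^2(l_3^2-k_3^2)}{(|l_h|^2+\pi^2 l_3^2)(|l_h|^2+\pi^2 k_3^2)},$$
together with $|\lambda_l+\lambda_k|\leq 2$, to extract a lower bound on $|\lambda_l-\lambda_k|$ polynomial in $|l|$, $|k|$, and $|l_3^2-k_3^2|^{-1}$; the resulting loss is majorised by $|k_3|^8|l|^{2r}$ and absorbed into the first sum defining $\|s\|_{r,K}$.

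Once these pointwise bounds are in place, the $H^r$ norm is recovered by multiplying by $|l|^{2r}$, squaring, and summing over $|l|\leq K$ and over the admissible frequencies $\mu$; Cauchy--Schwarz in the $u$-integral accounts for the $\int_0^t\|\d_u s(u)\|_{r,K}e^{-\alpha_l(t-u)}\,du$ term, the initial-time boundary term produces $\|s_{|t=0}\|_{r,K}e^{-\alpha_l t}$, and the boundary contribution at $u=t$ together with the $\alpha_l s$ piece yield $\|s(t)\|_{r,K}+\sup_{u\in[0,t]}\|s(u)\|_{r,K}$. The truncation $\bP_K$ is essential: it makes every sum finite and absorbs the polynomial losses from the small-divisor analysis into a bound that depends only on the weighted norm $\|\cdot\|_{r,K}$.

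\textbf{Main obstacle.} The delicate step is the small-divisor estimate for $\mu=-\lambda_k$, since both indices $l,k$ vary and $|\lambda_l-\lambda_k|$ can be arbitrarily small when $l_h$ is fixed and $l_3,k_3$ are close. Packaging the resulting polynomial loss so that it is exactly compensated by the $|k_3|^8$ weight, rather than by additional powers of $|l|$ outside $\|s\|_{r,K}$, requires a short case analysis distinguishing the regimes $|l_3-k_3|\sim 1$, $|l_3|\ll|k_3|$, and $|l_3|\sim|k_3|\to\infty$; this is the only non-routine calculation in the proof.
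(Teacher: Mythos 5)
Your overall strategy — Duhamel's formula mode by mode, integration by parts against $e^{i(\mu+\lambda_l)u/\eps}$ to extract the small-divisor factor $\eps/|\mu+\lambda_l|$, then Cauchy--Schwarz to convert the resulting $\ell^1$ sums over frequencies into the weighted $\ell^2$ norm $\|\cdot\|_{r,K}$ — matches the paper's proof exactly, as do the bounds for $\mu\in M$ with $|\mu|=1$ (your identity for $|1\pm\lambda_l|$ gives precisely $1/|\mu+\lambda_l|\lesssim|l|^2/|l_h|^2$, which is the paper's estimate) and the absorption of the $\alpha_l s$ term via $\int_0^t\alpha_l e^{-\alpha_l(t-u)}\,du\leq 1$.

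The one genuine gap is in your treatment of $\mu=-\lambda_k$. You propose to control $|\lambda_l-\lambda_k|$ from below via $|\lambda_l^2-\lambda_k^2|=|\lambda_l-\lambda_k|\,|\lambda_l+\lambda_k|\leq 2|\lambda_l-\lambda_k|$ combined with the explicit formula for $\lambda_l^2-\lambda_k^2$, which indeed has $l_3^2-k_3^2$ in the numerator. But when $l_3=-k_3$ (with $l_3\neq 0$) one has $\lambda_k=-\lambda_l$, so $\lambda_l^2-\lambda_k^2=0$ while $\lambda_l-\lambda_k=2\lambda_l\neq 0$: your identity yields a vanishing lower bound, hence an infinite ``loss'', and this is \emph{not} majorised by $|k_3|^8|l|^{2r}$. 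This sub-case is precisely the one not covered by the regimes you list ($|l_3-k_3|\sim 1$, $|l_3|\ll|k_3|$, $|l_3|\sim|k_3|\to\infty$), since it is distinguished by sign rather than by magnitude. The fix is easy — for $l_3=-k_3$ one just observes directly that $|\lambda_l-\lambda_k|=2|\lambda_l|$ is bounded below once $l_h$ is fixed and $l_3\neq 0$ — but your written argument does not notice the obstruction. The paper sidesteps the issue entirely by invoking the monotonicity of $l_3\mapsto\lambda_l$ (the gap $|\lambda_l-\lambda_k|$ dominates the smallest consecutive increment, which scales like $|l_h|^2/|k|^3$), yielding the uniform bound $|\lambda_l-\lambda_k|^{-1}\leq C|k|^3/|l_h|^2$ in a single stroke and without any sign case analysis; that is the cleaner route and the one you should adopt or at least compare your estimate against.
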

We recall that the notation $\bP_K$ stands for the projection onto the vector space generated by $N_k$ for $|k|\leq K.$

\begin{proof}For all $K>0$, define
$$
w_K:=\bP_K w=\sum_{|k|\leq K} w_l N_l.
$$
We deduce from Duhamel's formula that
\begin{eqnarray}
\nonumber|w_l(t)| &\leq & |w_l(0)|\exp(-(|l_h|^2 + \nu' l_3^2)t)\\
\label{Duhamel}&+& \left|\int_0^t \sum_{\mu\neq -\lambda_l}  s(\mu,l,u)e^{i(\mu+\lambda_l) \frac s\eps}\exp(-(|l_h|^2 + \nu' l_3^2)(t-u))\:du\right|.
\end{eqnarray}
Integrating by parts, we get
\begin{eqnarray*}
&& \left|\int_0^t s(\mu,l,u)e^{i(\lambda_l+\mu)\frac{u}{\eps}}  \exp(-(|l_h|^2 + \nu' l_3^2)(t-u))\:du\right|\\
&\leq & \frac{\eps}{|\lambda_l+\mu|}   |s(\mu,l,t)|  +\frac{\eps}{|\lambda_l+\mu|}  |s(\mu,l_h,0)|e^{-(|l_h|^2 + \nu' l_3^2)t}  \\
&+& \frac{\eps}{|\lambda_l+\mu|}\int_0^t| (|l_h|^2+\nu' |l_3|^2)|s(\mu,l,u)|\exp(-(|l_h|^2 + \nu' l_3^2)(t-u)) \:du\\
&+& \frac{\eps}{|\lambda_l+\mu|}\int_0^t   |\d_u s(\mu,l,u)|\exp(-(|l_h|^2 + \nu' l_3^2)(t-u)) \:du.
\end{eqnarray*}Plugging this inequality back into \eqref{Duhamel}, we deduce that
\begin{eqnarray*}
|w_l(t)|&\leq & |w_l(0)|\exp(-(|l_h|^2 + \nu' l_3^2)t)\\
&&+ C\eps  \sum_{\mu\neq -\lambda_l} \frac{|s(\mu,l,t)|}{|\lambda_l+\mu|}\\
&&+C \eps \sum_{\mu\neq -\lambda_l} \frac{|s(\mu,l_h,0)|}{|\lambda_l+\mu|}\exp(-(|l_h|^2 + \nu' l_3^2)t)\\
&&+C \eps\int_0^t F_l(u)\exp(-(|l_h|^2 + \nu' l_3^2)(t-u))\:du,
\end{eqnarray*}
where
\begin{eqnarray*}
F_l(u)&:=&  \sum_{\mu\neq -\lambda_l} \frac{1}{|\lambda_l+\mu|} |\d_u s(\mu,l,u)| \\
&+& (|l_h|^2 + \nu' |l_3|^2)\sum_{\mu\neq -\lambda_l} \frac{1}{|\lambda_l+\mu|} |s(\mu,l,u)|.
\end{eqnarray*}

There remains to derive bounds for quantities of the type
$$
\sum_{\mu\neq -\lambda_l}\frac{1}{|\mu + \lambda_l|} |s(\mu,l,u)|.
$$
Remember that either $\mu=-\lambda_k$ for some $k=(l_h,k_3)\in\bZ^3$ with $k_3\neq- l_3$, or $\mu\in M$, where $M$ is a finite set. Thus
\begin{eqnarray*}
&&\left(\sum_{\mu\neq -\lambda_l}\frac{1}{|\mu + \lambda_l|} |s(\mu,l,u)|\right)^2\\
&\leq & 2  \left(\sum_{k_3\neq l_3}\frac{1}{| \lambda_l-\lambda_k|} |s(-\lambda_k,l,u)|\right)^2\\
&&+ 2\left(\sum_{\mu\in M}\frac{1}{|\mu + \lambda_l|} |s(\mu,l,u)|\right)^2\\
&\leq &C \sum_{k_3\neq l_3}|k_3|^2\frac{1}{| \lambda_l-\lambda_k|^2} |s(-\lambda_k,l,u)|^2\\
&&+ C \sum_{\mu\in M}\frac{1}{|\mu + \lambda_l|^2} |s(\mu,l,u)|^2.
\end{eqnarray*}
Notice that the function $l_3\mapsto \lambda_l$ is monotonous for $l_h$ fixed. Hence $|\lambda_l-\lambda_k|$ is minimal for $k_3=l_3\pm 1$. Consequently, is is easily checked that for all $l_3\in\bZ$,
$$
|\lambda_l-\lambda_k|^{-1} \leq C\frac{|k|^3}{|l_h|^2}.
$$
Moreover, if $\mu\in M$, then either $\mu\notin \{0,1,-1\}$, and in this case
$$
|\lambda_l-\mu|^{-1} \leq C,
$$
or $\mu=0$, and then
$$
|\lambda_l-\mu|^{-1} \leq C\frac{|l|}{|l_3|},
$$
or 
$\mu\in \{1,-1\}$, and then
$$
|\lambda_l-\mu|^{-1} \leq C\frac{|l|^ 2}{|l_h|^ 2}.
$$

Gathering all these results we get
\begin{eqnarray}\nonumber
 |w_l(t)|&\leq & |w_l(0)|+C\eps D^0_l(t) \\&&+C\eps\int_0^t D^1_l(u)\exp\left(-\left(\frac{|l_h|^2}{2} + \nu' l_3^2\right)(t-u)\right)\:du,\label{in:delta_cl}
\end{eqnarray}
where
\begin{eqnarray*}
D^0_l(t)&:=&\left[\sum_{k_3}|k_3|^8|s(-\lambda_k,l,0)|^2\right]^{1/2}\exp\left(-(|l_h|^2 + \nu' l_3^2)t\right) \\
&+&\sum_{\mu\in M,\atop\mu\neq -\lambda_l}\left( 1 + \mathbf1_{|\mu|=1}|l|^2 \right) |s(\mu,l,0)|\exp\left(-(|l_h|^2 + \nu' l_3^2)t\right)\\
&+&\left[\sum_{k_3}|k_3|^8|s(-\lambda_k,l,t)|^2\right]^{1/2}\\
&+& \sum_{\mu\in M,\atop\mu\neq -\lambda_l}\left( 1 + \mathbf1_{|\mu|=1}|l|^2 \right)|s(\mu,l,t)|\,
\end{eqnarray*}
and
\begin{eqnarray*}
D^1_l (u)&:=&\left[\sum_{k_3}|k_3|^8|\d_u s(-\lambda_k,l,u)|^2\right]^{1/2}\\
&+&\sum_j \sum_{\mu\in M,\atop\mu\neq -\lambda_l}\left( 1 + \mathbf1_{|\mu|=1}|l|^2 \right) |\d_u s(\mu,l,u)|\\
&+&(|l_h|^2 + \nu'|l_3|^2)\left[\sum_{k_3}|k_3|^8|s(-\lambda_k,l,u)|^2\right]^{1/2}\\
&+&
(|l_h|^2 + \nu'|l_3|^2) \sum_{\mu\in M,\atop\mu\neq -\lambda_l}\left( 1 + \mathbf1_{|\mu|=1}|l|^2 \right)|s(\mu,l_h,u)|.
\end{eqnarray*}The estimate of Lemma \ref{smalldivisor} follows.
\end{proof}

\begin{Rmk}
Assume that the Fourier coefficients of $s$ have exponential decay, meaning that for all $(\mu,l)$, there exists $s_0(\mu,l)\in \bC$, and $c(\mu,l)\in \bC$ with nonnegative real part such that 
$$
s(\mu,l,t)=s_0(\mu,l)\exp(-c(\mu,l)t).
$$
Then provided the sequence $s_0(\mu,l)$ is sufficiently convergent, a special solution of \eqref{source} can be built, which preserves the exponential decay property. Indeed, for all $l\in\bZ^3$, set
$$
w_l(t):=\sum_{\mu\neq -\lambda_l}s_0(\mu,l)\frac{\exp\left( i(\lambda_l + \mu)\frac{t}{\eps} - c(\mu,l) t\right)}{i\frac{\lambda_l+ \mu}\eps - c(\mu,l) + |l_h|^2 + \nu|l_3|^3}.
$$
Then it can be readily checked that $w$ is a solution of \eqref{source}, and moreover
$$
|w_l(t)|\leq \eps\sum_{\mu\neq -\lambda_l}\frac{1}{|\lambda_l+\mu-\eps \Im(c(\mu,l))|}|s_0(\mu,l)|\exp\left( -\Re(c(\mu,l))t \right).
$$

\label{rmk:spec_sol}

\end{Rmk}

\end{document}